\theoremstyle{plain}
\numberwithin{equation}{section}
\newtheorem{theorem}[equation]{Theorem}
\newtheorem{lemma}[equation]{Lemma}
\theoremstyle{definition}
\newtheorem{definition}[equation]{Definition}
\newtheorem{remark}[equation]{Remark}
\newtheorem{question}[equation]{Question}
\newcommand{\assign}{:=}
\newcommand{\Z}{\mathbb{Z}}
\newcommand{\N}{\mathbb{N}}
\newcommand{\F}{\mathbb{F}}
\DeclareMathOperator{\End}{End}
\DeclareMathOperator{\tr}{tr}
\DeclareMathOperator{\Aut}{Aut}
\DeclareMathOperator{\gr}{gr}
\newcommand{\bfx}{\mathbf x}
\newcommand{\bfs}{\mathbf s}
\newcommand{\bft}{\mathbf t}
\newcommand{\bfe}{\mathbf e}
\newcommand{\bfa}{\mathbf a}
\newcommand{\bfb}{\mathbf b}
\begin{document}

\title[Discriminants and Automorphism Groups of Veronese subrings] 
{Discriminants and Automorphism Groups\\
of Veronese subrings of skew polynomial rings}

\author{K. Chan, A.A. Young, and J.J. Zhang}

\address{(Chan) Department of Mathematics, Box 354350,
University of Washington, Seattle, Washington 98195, USA}
\email{kenhchan@math.washington.edu}

\address{(Young) Department of Mathematics,
DigiPen Institute of Technology, Redmond, WA 98052, USA}
\email{young.mathematics@gmail.com}

\address{(Zhang) Department of Mathematics, Box 354350,
University of Washington, Seattle, Washington 98195, USA}
\email{zhang@math.washington.edu}

\begin{abstract}
We study important invariants and  properties of 
the Veronese subalgebras of $q$-skew polynomial rings, 
including their discriminant, center and automorphism 
group, as well as cancellation property and the Tits alternative. 
\end{abstract}

\subjclass[2000]{Primary 16W20}


\keywords{skew polynomial ring, Veronese subring,
discriminant, automorphism group, 
cancellation problem, Tits alternative}


\maketitle


\section*{Introduction}
\label{xxsec0}

The determination of the full automorphism group of an algebra
is a fundamental problem in mathematics. This is generally extremely 
difficult,  
for example, even for the \emph{polynomial ring in three variables}
its automorphism group is not well understood. Aside from a 
remarkable result of Shestakov-Umirbaev 
\cite{SU} which shows that the Nagata automorphism is a wild 
automorphism, the general structure of this automorphism group eludes our grasp. 

Since the 1990s, researchers have successfully computed the full
automorphism group of several interesting families of noncommutative 
algebras of finite Gelfand-Kirillov dimension, including certain 
quantum groups, generalized quantum Weyl algebras, skew polynomial 
rings -- see \cite{AlC, AlD, AnD, BJ, GTK, GY, LL, SAV}. A few 
years ago, by using a rigidity theorem for quantum tori, Yakimov proved 
the Andruskiewitsch-Dumas conjecture \cite{Y1} and the Launois-Lenagan 
conjecture \cite{Y2}, each of which determines the full automorphism 
group of an important class of quantized algebras. Recently, 
Ceken-Palmieri-Wang and the third-named author introduced 
a discriminant method to control the automorphism group of certain 
classes of algebras \cite{CPWZ1, CPWZ2} and then were able to compute 
the automorphism group of several more families of Artin-Schelter regular 
algebras that satisfy a polynomial identity. The wisdom behind much of this
progress is that noncommutative algebras are more rigid, so that more techniques are
available for detecting their symmetries. 

In most of the results mentioned above, the algebras are deformations 
(in some weak sense) of the commutative polynomial rings. 
In this paper we apply the discriminant method to certain noncommutative
algebras that are \emph{not} deformations of polynomial rings. We are mainly 
interested in the automorphism problem, but will briefly touch upon the 
cancellation problem and the Tits alternative.

Throughout the introduction let $\Bbbk$ denote our base field
and $\Bbbk^{\times}$ be the its group of units.
For a $\Bbbk$-algebra $A$, let $\Aut(A)$ denote the group of 
$\Bbbk$-algebra automorphisms of $A$.

Fix a $q\in\Bbbk^{\times}$.  Let $\Bbbk_q[x_1,\cdots,x_n]$
denote the skew polynomial ring generated by $x_1,\cdots,x_n$ subject 
to the relations
\begin{equation}
\label{E0.0.1}\tag{E0.0.1}
x_j x_i =q x_i x_j, \quad \forall \; 1\leq i< j\leq n.
\end{equation}
In this paper we assume that $q$ is a nontrivial root of unity.
First we consider the case when $q=-1 (\neq 1)$.
By \cite[Theorem 4.10(1)]{CPWZ1}, if $n$ is even, then 
\begin{equation}
\label{E0.0.2}\tag{E0.0.2}
\Aut(\Bbbk_{-1}[x_1,\cdots,x_n])=S_n \ltimes (\Bbbk^{\times})^n
\end{equation}
which is virtually abelian [Definition \ref{xxdef0.6}(1)]. 
On the other hand, if $n$ is odd,
then \cite[Theorem 2]{CPWZ3} says that $\Aut(\Bbbk_{-1}[x_1,\cdots,x_n])$
contains a free group on two generators. In the case of $q=-1$, 
these two results present a dichotomy depending on the parity of $n$. 
This is a version of the Tits alternative [Definition \ref{xxdef0.6}(3)]. 

For any ${\mathbb Z}$-graded algebra $A=\bigoplus_{i\in {\mathbb Z}} A_i$ 
and for any positive integer $v$, the {\it  $v$th Veronese subring} of $A$ is 
defined to be
$$A^{(v)}:=\bigoplus_{i\in {\mathbb Z}} A_{vi}.$$
The following theorem generalizes the result \cite[Theorem 4.10(1)]{CPWZ1}.

\begin{theorem}
\label{xxthm0.1} 
Suppose that ${\rm{char}}\; \Bbbk\neq 2$.
Let $A$ be $\Bbbk_{-1}[x_1,\cdots,x_n]^{(v)}$ where $v$ is a 
positive integer. If $n$ and $v$ have different parity, then 
$\Aut(A)\cong S_n \ltimes (\Bbbk^{\times})^n$.
\end{theorem}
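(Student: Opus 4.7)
The plan is to apply the discriminant method of \cite{CPWZ1, CPWZ2} to the algebra $A = \Bbbk_{-1}[x_1,\dots,x_n]^{(v)}$; throughout write $B = \Bbbk_{-1}[x_1,\dots,x_n]$. The inclusion $S_n \ltimes (\Bbbk^\times)^n \hookrightarrow \Aut(A)$ is the easy direction: the natural action of $G := S_n \ltimes (\Bbbk^\times)^n$ on $B$ by permuting and rescaling the $x_i$ is graded and hence restricts to $A$, and a direct check (up to a harmless finite identification of the torus arising from the Veronese reparametrization) yields a subgroup of $\Aut(A)$ abstractly isomorphic to $G$.

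For the reverse inclusion $\Aut(A) \hookrightarrow G$ I would proceed in three steps. First, compute $Z := Z(A)$ and verify that $A$ is a finitely generated free $Z$-module; a centralizer computation inside $B$ should show that under the parity hypothesis, $Z$ is a Veronese subring of $\Bbbk[x_1^2,\dots,x_n^2]$ (in the case $n$ even, $v$ odd this is exactly $Z(B)^{(v)}$; in the case $n$ odd, $v$ even the extra central element $x_1\cdots x_n \in B$ has degree $n$ coprime to $v$, so it does not meaningfully enlarge the center). Second, compute the discriminant $d(A/Z) \in Z$ and show that under the parity hypothesis it factors as $u \prod_{i=1}^n f_i^{e_i}$ with $u \in Z^\times$, the $f_i$ pairwise coprime ``dominating'' irreducibles in the sense of \cite{CPWZ2}, and each $f_i$ a power of a polynomial in the single variable $x_i$. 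Third, apply discriminant rigidity: any $\phi \in \Aut(A)$ sends $d$ to a unit multiple of itself, the grading together with the fact that $\ch\Bbbk\neq 2$ reduces that unit to a scalar, and the factorization then forces $\phi$ to permute and rescale the $f_i$; after lifting back through the Veronese embedding this pins down $\phi$ as an element of $G$.

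The principal obstacle is the discriminant computation in step two. Since $A$ is generated by \emph{all} degree-$v$ monomials in the $x_i$ rather than by the $x_i$ alone, the trace form on $A$ over $Z$ requires considerably more combinatorial bookkeeping than the discriminant of $B$ worked out in \cite{CPWZ1}, and one will likely want to choose a distinguished $Z$-basis of $A$ indexed by residue classes of exponent vectors modulo $v$ and carefully track the signs coming from the anticommutation. The subtle point to verify is that the $n$ expected factors $f_i$ remain pairwise coprime under passage to the Veronese --- this is precisely where the hypothesis $n \not\equiv v \pmod 2$ enters, since in the ``same parity'' case additional central elements (for instance a power of $x_1\cdots x_n$ when $n$ is odd and $v \mid n$) would intrude and produce symmetries beyond $G$, consistent with the wild automorphisms of $B$ that exist when $n$ is odd and $v=1$.
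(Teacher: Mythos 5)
Your overall strategy coincides with the paper's: compute the discriminant of $A$ over its center, find that it is (up to a unit) a power of $x_1\cdots x_n$, and use its invariance under every automorphism to rigidify. However, two steps in your third stage have genuine gaps. First, ``the factorization then forces $\phi$ to permute and rescale the $f_i$'' does not follow from the factorization of a single invariant element alone: one must first show that $\phi$ carries every monomial to a scalar multiple of a monomial (because every monomial divides a power of $(x_1\cdots x_n)^{v}$ and divisors of $\mathbb{N}^n$-homogeneous elements of the $\mathbb{N}^n$-graded domain $A$ are homogeneous), then that $\phi$ preserves total degree, and then identify the $x_i^v$ intrinsically among degree-$v$ monomials --- the paper uses that $f=x_i^v$ is characterized by uniqueness of the decomposition $f^2=_{\Bbbk^\times}f_1f_2$ into degree-$v$ monomials. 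Second, and more seriously, ``lifting back through the Veronese embedding'' to pin $\phi$ down as an element of $G$ fails as stated: an automorphism of $A$ fixing each $x_i^v$ up to a scalar generally does not extend to $\Bbbk_{-1}[\bfx]$, since an extension would require extracting $v$th roots of those scalars in $\Bbbk$. The kernel of $\phi\mapsto\pi_\phi$ must be computed inside $A$ itself; Lemma \ref{xxlem6.2} shows it is parametrized by $(c,k_2,\ldots,k_n)\in(\Bbbk^\times)^n$ with $c$ the scalar on $x_1^v$ and $k_i$ read off from $x_1^{v-1}x_i$, and this asymmetric parametrization is exactly why the $S_n$-action on the torus in the semidirect product is not the standard permutation action, as the paper notes after Theorem \ref{xxthm0.1}.

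A minor further slip: your reason that $x_1\cdots x_n$ does not enlarge $Z(A)$ when $n$ is odd (``degree $n$ coprime to $v$'') is not the right one, since $\gcd(n,v)$ can be large; what matters is that odd powers of $x_1\cdots x_n$ have odd total degree while $v$ is even, so they never lie in $A^{(v)}$, and even powers already lie in $\Bbbk[x_1^2,\ldots,x_n^2]$.
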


Considering elements in $(\Bbbk^{\times})^{n}$ as $(a_1,\cdots,a_n)$,
the $S_n$-action on $(\Bbbk^{\times})^n$ in Theorem \ref{xxthm0.1}
does not follow the standard rule 
\begin{equation}
\label{E0.1.1}\tag{E0.1.1}
\sigma: (a_1,\cdots,a_n)\longmapsto 
(a_{\sigma^{-1}(1)},\cdots,a_{\sigma^{-1}(n)})
\end{equation} 
for all $\sigma \in S_n$, due to asymmetry of the automorphisms corresponding to 
$(\Bbbk^{\times})^n$, see Lemma \ref{xxlem6.2} for some details. On the
other hand, the $S_n$-action appearing in \eqref{E0.0.2} does follow the 
standard rule \eqref{E0.1.1}.

If $n$ and $v$ have the same parity, we are unable to determine 
the automorphism group of $A$, but we conjecture that it contains
a free subgroup of rank $2$. Also in this case we are unable to 
decide whether or not $A$ is cancellative [Definition \ref{xxdef0.3}], 
-- see Theorem \ref{xxthm0.4} and Question \ref{xxque0.5} below 
for related results and questions.

We can generalize the above theorem to the case when $q$ is arbitrary of finite
order. Let $m$ be the order of $q$ and assume that $m$ is 
bigger than $2$ (or equivalently, $q\neq \pm 1$). We 
have two different hypotheses dependent on the parity 
of $n$ in the following theorem. 

\begin{theorem}
\label{xxthm0.2} 
Let $A$ be $\Bbbk_{q}[x_1,\cdots,x_n]^{(v)}$ where $v$ is a 
positive integer and $m>2$. Suppose that one of the following 
is true.
\begin{enumerate}
\item[(a)] 
$n$ is even and $m$ does not divide $v$.
\item[(b)]
$n$ is odd and $\gcd(m,v)\neq 1$. 
\end{enumerate}
Then the following statements hold.
\begin{enumerate}
\item[(1)]
If $q^v$ is either $1$ or $-1$, then  
$\Aut(A)\cong \Z/(n) \ltimes (\Bbbk^{\times})^n$.
\item[(2)]
If $q^v\neq \pm 1$, then $\Aut(A)\cong (\Bbbk^{\times})^n$.
\end{enumerate}
\end{theorem}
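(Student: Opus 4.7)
The plan is to apply the discriminant method of \cite{CPWZ1,CPWZ2} to the Veronese subring $A = \Bbbk_q[x_1,\ldots,x_n]^{(v)}$. Since $q$ has finite order $m$, $A$ is a polynomial identity algebra that is module-finite over its center, so the discriminant is available as a tool. The structure of the argument should parallel (and generalize) the proof of Theorem \ref{xxthm0.1}, with the additional parameter $q$ creating the dichotomy $q^v = \pm 1$ versus $q^v \neq \pm 1$.

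First, I would identify a central subring $C \subseteq Z(A)$ over which $A$ is free of finite rank and compute the discriminant $d(A/C)$ up to a unit of $C$. A natural choice is to use the central monomials of $\Bbbk_q[x_1,\ldots,x_n]$ that lie in $A$; this requires handling two cases, since the center of $\Bbbk_q[x_1,\ldots,x_n]$ is generated by $x_1^m,\ldots,x_n^m$ when $n$ is even, and includes in addition a monomial involving $x_1 x_2 \cdots x_n$ when $n$ is odd. The hypotheses $m \nmid v$ in (a) and $\gcd(m,v) \neq 1$ in (b) are the conditions that make the relevant factorization of $d(A/C)$ come out (up to a unit) as a product of positive powers of $x_1^v, \ldots, x_n^v$ only, with no unwanted prime factors.

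Next, any $\phi \in \Aut(A)$ preserves $d(A/C)$ up to an element of $C^\times$, so $\phi$ must permute the prime factors $x_1^v, \ldots, x_n^v$ up to scalars in $\Bbbk^\times$. This yields a homomorphism $\pi\colon \Aut(A)\to S_n$. The kernel consists of the automorphisms acting diagonally on $\{x_1^v,\ldots,x_n^v\}$; this is $(\Bbbk^\times)^n$ once one verifies that each such diagonal rescaling extends uniquely to an automorphism of the whole Veronese (most easily by lifting to the full skew polynomial ring). For the image, one asks which $\sigma \in S_n$ are compatible with the relations $x_j^v x_i^v = q^{v^2} x_i^v x_j^v$ (for $i<j$) among the distinguished generators. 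If $q^v = \pm 1$, then $q^{v^2} \in \{\pm 1\}$ and exactly the cyclic group generated by the long cycle $(1\,2\,\cdots\,n)$ is compatible, with the sign twists absorbed by the diagonal scalar factor; this yields $\im(\pi) \cong \Z/(n)$. If $q^v \neq \pm 1$, then $q^{v^2}$ has order at least $3$, and the asymmetric relations forbid any non-identity permutation, so $\pi$ is trivial.

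The principal obstacle is the discriminant computation itself: the Veronese subring has many more algebra generators (all degree-$v$ monomials, not just the $x_i$) and a smaller center than the ambient $\Bbbk_q[x_1,\ldots,x_n]$, so the calculations in \cite{CPWZ1,CPWZ2} do not transfer verbatim; a likely route is an induction on $n$ or a reduction through an intermediate subring generated by $x_1^v,\ldots,x_n^v$. Handling case (b) adds the further complication of tracking a central monomial involving $x_1 x_2 \cdots x_n$. A secondary obstacle is the explicit realization of the cyclic automorphism needed in part (1): one must construct a $\Bbbk$-algebra map inducing $i \mapsto i+1 \pmod{n}$ on the $x_i^v$'s and verify its compatibility with all relations among arbitrary degree-$v$ monomials of $A$, not merely the relations among the $x_i^v$ themselves.
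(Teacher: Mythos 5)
Your overall architecture matches the paper's: compute the discriminant of $A$ over its center, show it is (a power of) $x_1^v\cdots x_n^v$ under hypotheses (a)/(b), deduce that every automorphism permutes the $x_i^v$ up to scalars, identify the kernel with $(\Bbbk^\times)^n$, and then determine the image in $S_n$. The paper does exactly this via Theorems \ref{xxthm4.4} and \ref{xxthm5.3}, Lemma \ref{xxlem2.1}, and Lemmas \ref{xxlem6.1}--\ref{xxlem6.3}. However, your determination of the image has a genuine gap. You propose to decide which $\sigma\in S_n$ are admissible by testing compatibility with the relations $x_j^v x_i^v=q^{v^2}x_i^v x_j^v$ among the distinguished generators alone. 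These relations carry the \emph{same} factor $q^{v^2}$ for every pair $i<j$, so whenever $q^{v^2}=\pm 1$ they are invariant under the \emph{full} symmetric group: in part (1), where $q^v=\pm1$ forces $q^{v^2}=\pm1$, your criterion would yield $S_n$ rather than $\Z/(n)$, and your claim that only the long cycle is compatible is false at this level. Worse, in part (2) the implication $q^v\neq\pm1\Rightarrow q^{v^2}$ has order at least $3$ is wrong: take $m=8$, $v=2$, $n$ even (so $m\nmid v$ and hypothesis (a) holds); then $q^v=q^2$ has order $4$ but $q^{v^2}=q^4=-1$, and your argument cannot rule out nontrivial permutations. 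The information that actually pins down the image lives in relations involving the mixed degree-$v$ monomials $x_i^{v-1}x_j$: the paper uses $(x_i^{v-1}x_j)x_i^v=q^{\pm v}x_i^v(x_i^{v-1}x_j)$ (exponent $\pm v$, not $\pm v^2$) to force $\pi_g=1$ when $q^v\neq\pm1$, and the relations $y_{i,j}y_{i,k}=q^{\pm1}y_{i,k}y_{i,j}$ with $y_{i,j}=x_i^{v-1}x_j$ to force $\pi_g$ to be a cyclic shift when $q^v=\pm1$ and $q\neq\pm1$ (Lemma \ref{xxlem6.3}(2),(4)).

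Two smaller points. First, the existence of the order-$n$ cyclic automorphism in part (1), which you flag as an obstacle but do not resolve, is genuinely delicate: the naive permutation $x_i\mapsto x_{i+1}$ does \emph{not} extend to an automorphism of $\Bbbk_q[\bfx]$ when $q\neq\pm1$, and the paper must twist by a quadratic factor $q^{s_n^2}$ on monomials, with the verification using that $v$ divides total degrees and $q^v=\pm1$ (Lemma \ref{xxlem6.3}(3)). Second, the kernel is $(\Bbbk^\times)^n$ but its elements do not all lift to diagonal automorphisms of $\Bbbk_q[\bfx]$ over an arbitrary field (one would need $v$-th roots of the scalar on $x_1^v$); the correct parametrization is by the scalars on $x_1^v$ and on $x_1^{v-1}x_i$ as in Lemma \ref{xxlem6.2}, which is why the resulting semidirect product action is not the standard permutation action.
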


It is very difficult to describe the group $\Aut(A)$ if $n\geq 3$ and 
$(n,m,v)$ does not satisfy Theorem \ref{xxthm0.2}(a,b).
Hypotheses (a) and (b) have 
other significant consequences. Furthermore, for any tensor product 
of algebras in above two theorems, the automorphism 
group is also computable, see Remark \ref{xxrem7.8}(1). 


The proofs of the first two theorems are based on 
calculations of the discriminant of the algebra $A$ 
over its center. Further, the discriminant method 
can also be used to answer the {\it cancellation problem} 
which is closely related to the {\it automorphism problem}. 
We recall a definition.

\begin{definition}
\label{xxdef0.3}
An algebra $A$ is called {\it cancellative} if 
$A[t] \cong B[t]$ for 
any algebra $B$ implies that $A\cong B$. 
\end{definition}

One famous open problem in affine algebraic geometry is 
the Zariski Cancellation Problem which asks if \emph{the polynomial 
ring $\Bbbk[x_1, \cdots, x_n]$, for $n\geq 3$, is cancellative}. 
It is well-known that $\Bbbk[x]$ and $\Bbbk[x_1,x_2]$ are 
cancellative for any field $\Bbbk$. In 2013, Gupta \cite{Gu1, Gu2} settled
the Zariski Cancellation Problem negatively in positive characteristic 
for $n\geq 3$. The Zariski Cancellation Problem in characteristic
zero remains open for $n \geq 3$, see \cite{BZ, Gu3} for more 
details and relevant references.

Our methods of using the discriminant can be applied to show that certain 
Veronese subalgebras of the skew polynomial rings are cancellative.

\begin{theorem}
\label{xxthm0.4} 
Let $A$ be $\Bbbk_{q}[x_1,\cdots,x_n]^{(v)}$ where $v$ is a 
positive integer and let $m$ be the order of $q$.
Suppose that one of the following is true.
\begin{enumerate}
\item[(a)] 
$n$ is even and $m$ does not divide $v$.
\item[(b)]
$n$ is odd and $\gcd(m,v)\neq 1$. 
\end{enumerate}
Then $A$ is cancellative.
\end{theorem}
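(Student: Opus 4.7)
The plan is to deduce cancellation from a discriminant computation, following the framework of Bell--Zhang and CPWZ1, CPWZ2, in which a finitely generated PI algebra whose discriminant over its center is \emph{dominating} is automatically cancellative. Under hypothesis (a) or (b), the algebra $A = \Bbbk_q[x_1,\ldots,x_n]^{(v)}$ is a finite module over its center: since $q$ has finite order $m$ and $x_j^a x_i^b = q^{ab} x_i^b x_j^a$, sufficiently high powers of each degree-$v$ generator of $A$ lie in $Z(A)$, so $A$ is PI and finite over a polynomial subring of $Z(A)$. The hypotheses are precisely the ones ensuring that the center is genuinely smaller than $A$; if $n$ is even and $m\mid v$, or if $n$ is odd and $\gcd(m,v)=1$, the Veronese either becomes commutative or drifts far enough from a PI structure that the discriminant degenerates.

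The next step is to compute, or at least pin down the leading monomial of, the discriminant $d(A/Z(A))$. The calculation should run parallel to (and reuse) the discriminant computations underlying Theorems \ref{xxthm0.1} and \ref{xxthm0.2}: one picks a PBW-style basis $\{e_i\}$ of $A$ as a free (or projective) module over a chosen polynomial subring of $Z(A)$ and computes $\det(\tr(e_i e_j))$ in that basis. I expect the outcome to be a product of central monomials in the $x_i$ (up to a unit scalar), so that its image in an associated graded with respect to a natural $\N^n$-filtration by monomial degree is a single nonzero monomial supported on all generators.

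With such a discriminant in hand, one invokes the standard cancellation criterion: a dominating discriminant forces every locally nilpotent derivation of $A[t]$ to vanish on $A$, and this LND-rigidity upgrades any isomorphism $A[t]\cong B[t]$ to $A\cong B$. The main obstacle will be the middle step --- rigorously checking that hypotheses (a)/(b) translate into a \emph{dominating} discriminant rather than merely a nonzero one. The Veronese construction replaces single variables by degree-$v$ monomials, so aligning the trace pairing with the monomial filtration is combinatorially delicate. In particular, one must confirm that in neither excluded case ($n$ even with $m\mid v$, or $n$ odd with $\gcd(m,v)=1$) does the discriminant become a unit in the associated graded, which would kill the dominating property; conversely, whenever (a) or (b) holds, the discriminant must carry every central generator to a strictly positive power, so that it dominates up to the filtration.
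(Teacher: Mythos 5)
Your proposal follows essentially the same route as the paper: Theorems \ref{xxthm4.4} and \ref{xxthm5.3} show that under hypothesis (a) or (b) the discriminant of $A$ over its center is $(x_1\cdots x_n)^N$ with $N>0$, Lemma \ref{xxlem7.5} shows a suitable power of this is dominating, and Lemma \ref{xxlem7.4} (the Bell--Zhang criterion) then gives strong cancellativity. The one technical refinement you should note is that the paper works with the $p$-power discriminant $d^{[p]}_w(A/C)$ rather than the plain one, choosing $p$ so that the exponent $wp(g-1)$ (resp.\ $wp(\tfrac{m}{g}-1)$) is divisible by $v$ and hence lands in the Veronese subring, and that your aside about the excluded cases "drifting from a PI structure" is off --- $A$ is always PI and finite over its center; in the excluded cases the discriminant is simply a unit, so the method gives no information.
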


This says that all the algebras appearing in the first
two theorems are cancellative, see Remark \ref{xxrem7.8}(2) 
for a more general result. As mentioned above, we can not 
decide whether or not 
$\Bbbk_{q}[x_1,\cdots,x_n]^{(v)}$ is cancellative 
if it does not fit into Theorem \ref{xxthm0.4}.
We formally ask

\begin{question}
\label{xxque0.5}
Let $A$ be $\Bbbk_{q}[x_1,\cdots,x_n]^{(v)}$ where $v$ is a 
positive integer and let $2\leq m<\infty$ be the order of $q$.
Suppose that one of the following is true.
\begin{enumerate}
\item[(a)] 
$n$ is even and $m$ divides $v$.
\item[(b)]
$n$ is odd and $\gcd(m,v)= 1$. 
\end{enumerate}
Is then $A$ cancellative?
\end{question}


The Zariski Cancellation Problem is connected to several 
other open problems in affine algebraic geometry -- see \cite{BZ, Gu3}.  
In the noncommutative setting, it is also related to
certain properties of the Nakayama automorphism \cite{LWZ} and 
the Makar-Limanov invariant \cite{BZ}. 

The last result in this paper concerns the Tits alternative for 
automorphism groups of the Veronese subalgebras of skew 
polynomial rings. In 1972, Tits proved a remarkable 
and surprising dichotomy \cite{Ti}: \emph{for any subgroup $G$ of 
the general linear group ${\mathrm{GL}}({\mathbb C}^{\oplus n})$,
either $G$ is virtually solvable, or $G$ contains a
free group of rank $2$}. Since then, similar dichotomy
results have generally been referred as the {\it Tits alternative}.
The original Tits alternative and its variations 
have many applications in dynamical systems, geometric 
group theory, Diophantine geometry, topology and so on.
There is a version of the Tits alternative for the 
class of the automorphism groups of skew polynomial rings
following \cite[Theorem 2]{CPWZ3}. In general it would be 
very interesting to prove that some classes of algebraic objects 
must satisfy certain non-obvious dichotomy such as the Tits 
alternative. 

To state our result we  recall some definitions.

\begin{definition}
\label{xxdef0.6} Let $G$ be a group.
\begin{enumerate}
\item[(1)]
$G$ is called \emph{virtually abelian} if there is a normal
abelian subgroup $N\subseteq G$ such that $G/N$ is finite.
\item[(2)]
$G$ is called \emph{virtually solvable} if there is a normal
solvable subgroup $N\subseteq G$ such that $G/N$ is finite.
\item[(3)]
Let ${\mathcal C}$ be a class of groups. We say 
${\mathcal C}$ satisfies the {\it Tits Alternative} if 
the following dichotomy holds: any $G \in {\mathcal C}$ 
is either virtually solvable or it contains a free subgroup
of rank $2$.
\end{enumerate}
\end{definition}

For any fixed $n\geq 2$, let ${\mathcal C}_n$ consist of
groups $\Aut(A)$ where $A=\Bbbk_{q}[x_1,\cdots,x_n]^{(v)}$
for all $q\in\Bbbk^{\times}$ being a root of unity and all 
$v\in\mathbb{N}$. 

\begin{theorem}
\label{xxthm0.7}
Retain the above notation.
\begin{enumerate}
\item[(1)] 
If $n$ is odd, the Tits alternative holds for
${\mathcal C}_n$. 
\item[(2)]
The Tits alternative holds for ${\mathcal C}_2$. 
\end{enumerate}
\end{theorem}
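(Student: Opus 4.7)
The plan is to dispatch both parts by a case analysis on $m$ (the order of $q$) and on $\gcd(m,v)$. Whenever Theorem~\ref{xxthm0.1} or Theorem~\ref{xxthm0.2} applies, the group $\Aut(A)$ is virtually abelian (hence virtually solvable); in the remaining cases I will need to exhibit a rank-$2$ free subgroup of $\Aut(A)$.

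For Part~(2), Theorem~\ref{xxthm0.2}(a) covers every $(m,v)$ with $m \geq 2$ and $m \nmid v$, so it remains to treat the case $m \mid v$ (which includes $q = 1$). My first step is to verify that $A = \Bbbk_q[x_1, x_2]^{(v)}$ is commutative: for any two monomials $x_1^a x_2^b$, $x_1^c x_2^d \in A$, the conditions $v \mid a+b$ and $v \mid c+d$ give $b \equiv -a$, $d \equiv -c \pmod m$, whence $bc - ad \equiv 0 \pmod m$, and one computes $(x_1^a x_2^b)(x_1^c x_2^d) = q^{bc-ad} (x_1^c x_2^d)(x_1^a x_2^b) = (x_1^c x_2^d)(x_1^a x_2^b)$. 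I then plan to use a standard $2$-cocycle trivialization on the sublattice $\{(a, b) \in \mathbb{Z}_{\geq 0}^2 : v \mid a+b\}$ to identify $A$ with the usual Veronese $\Bbbk[x_1, x_2]^{(v)}$. Finally, the natural $\mathrm{GL}_2$-action by linear substitutions descends to an embedding $\mathrm{GL}_2/\mu_v \hookrightarrow \Aut(A)$, and since $\mathrm{SL}_2(\Bbbk)$ contains a free subgroup of rank $2$, so does $\Aut(A)$.

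For Part~(1) with $n \geq 3$ odd, the cases not handled by Theorems~\ref{xxthm0.1},~\ref{xxthm0.2} are (i)~$q = 1$ and (ii)~$m \geq 2$ with $\gcd(m,v) = 1$. Case~(i) I would handle exactly as in Part~(2): $\mathrm{GL}_n/\mu_v \hookrightarrow \Aut(A)$, and $\mathrm{SL}_n$ contains $F_2$ (via any $\mathrm{SL}_2$ subgroup). For case~(ii), I plan to adapt the construction of \cite[Theorem~2]{CPWZ3}: there, for $q = -1$ and $n$ odd, automorphisms of $\Bbbk_{-1}[x_1,\ldots,x_n]$ of the shape $x_1 \mapsto x_1 + \alpha(x_2 x_3 \cdots x_n)^k$ are paired via ping-pong to generate $F_2$, exploiting that $x_2 \cdots x_n$ commutes with $x_1$ when $n$ is odd and $q = -1$. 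For $A$ with $\gcd(m, v) = 1$, I will look for homogeneous generators $g \in A$ and central homogeneous elements $\zeta \in A$ (both in degrees divisible by $v$) such that the map $g \mapsto g + \alpha \zeta^k$ extends to an algebra automorphism of $A$; the coprimality $\gcd(m, v) = 1$ is the arithmetic lever that lets me match residues modulo $v$ and modulo $m$ simultaneously, for instance by passing to a suitable power of an element like $x_2 \cdots x_n$ whose order of non-commutation with $x_1$ can be killed.

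The main obstacle is case~(ii) of Part~(1), which is precisely the regime of Question~\ref{xxque0.5}(b) where the full automorphism group of $A$ is not yet determined. The hard work will be twofold: first, identifying central elements of $A$ of the right degree so that the translation automorphisms preserve the Veronese grading; and second, carrying out the ping-pong estimates in the Veronese context, where the naive coordinate generators $x_i$ themselves no longer lie in $A$. Both of these call for a careful refinement of the construction in \cite{CPWZ3}, and this is where I expect the main technical effort to go.
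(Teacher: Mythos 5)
Your case decomposition is the same as the paper's (Theorems \ref{xxthm0.1} and \ref{xxthm0.2} give virtual abelianness except when $n$ is odd with $\gcd(m,v)=1$, or $n=2$ with $m\mid v$), and your observation that $\Bbbk_q[x_1,x_2]^{(v)}$ is commutative when $m\mid v$ matches Lemma \ref{xxlem8.4}. But there are two genuine problems. First, in both places where you produce a free subgroup from linear substitutions, the claim that $\mathrm{SL}_2(\Bbbk)$ (or $\mathrm{SL}_n(\Bbbk)$) contains a free subgroup of rank $2$ is false in the generality the paper works in: for $\Bbbk=\overline{\F_p}$ (or any field algebraic over $\F_p$), $\mathrm{SL}_n(\Bbbk)$ is locally finite, so it contains no free subgroup at all, and such fields are admissible here (the only standing restriction in Section \ref{xxsec8} is $\Bbbk\neq \Z/(2)$). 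The paper avoids this by using \emph{nonlinear} triangular automorphisms $\exp(\partial_1)$, $\exp(\partial_2)$ with $\partial_1\colon x_1\mapsto x_2^{v+1}$, $\partial_2\colon x_2\mapsto x_1^{v+1}$, whose freeness comes from the degree-based ping--pong of \cite[Proposition 2.5]{CPWZ3} and holds over any such $\Bbbk$; these restrict to the Veronese because the derivations have degree $v$.

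Second, and more seriously, the case you yourself identify as the main obstacle --- $n\geq 3$ odd, $m\geq 2$, $\gcd(m,v)=1$ --- is exactly the content of Lemma \ref{xxlem8.1}, and your proposal stops at a plan rather than a construction. The paper's resolution is concrete: writing $n=2s+1$ and choosing $\alpha,\beta$ with $(\alpha+s)m-\beta v=1$, one defines locally nilpotent derivations $\partial_1\colon x_1\mapsto x_2^{\alpha m}(x_2x_3^{m-1}x_4x_5^{m-1}\cdots x_{2s}x_{2s+1}^{m-1})$ and $\partial_3\colon x_3\mapsto x_2^{\alpha m}(x_1^{m-1}x_2x_4x_5^{m-1}\cdots x_{2s}x_{2s+1}^{m-1})$ (all other generators to $0$). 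The point is not that the added monomial is central --- it is not --- but that it $q$-commutes with each $x_j$ exactly as $x_1$ (resp.\ $x_3$) does, which is what makes $\partial_i$ a derivation; and its degree is $\alpha m+sm-1=\beta v$, a multiple of $v$, which is precisely where $\gcd(m,v)=1$ enters and which guarantees $\exp(\partial_i)$ preserves the Veronese subalgebra. Freeness of $\langle\exp(\partial_1),\exp(\partial_3)\rangle$ again comes from \cite[Proposition 2.5]{CPWZ3}, and one still needs the observation (Lemma \ref{xxlem8.2}) that restriction to $\Bbbk_q[\bfx]^{(v)}$ is injective on this subgroup, a step your outline omits. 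Without these ingredients the proof of part (1) is incomplete.
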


This theorem leaves the following question.

\begin{question}
\label{xxthm0.8}
Does the Tits alternative hold for ${\mathcal C}_n$
for even integer $n\geq 4$?
\end{question}

In principle, the discriminant method introduced in \cite{CPWZ1, CPWZ2}
can be applied to any algebras, though in applications (and 
examples) given there  most algebras are Artin-Schelter 
regular. In this paper we consider a class of algebras 
that are not Artin-Schelter regular and show that the discriminant 
method is still very effective in solving several classical problems.

The paper is organized as follows. We provide background material 
and recall the definition of the discriminant in the noncommutative setting 
in Section 1. In Section 2, we study some basic properties of the 
discriminant. In Section 3, we provide some information about the center
and Veronese subrings of the $q$-skew polynomial rings. Detailed discriminant 
computations are given in Section 4 (when $n$ is odd) and Section 5 
(when $n$ is even). Main theorems (Theorems \ref{xxthm0.1} and  
\ref{xxthm0.2}) are proved in Section 6. In Section 
7 we deal with the cancellation problem and prove Theorem \ref{xxthm0.4}.
The Tits alternative is discussed in Section 8 where
Theorem \ref{xxthm0.7} is proved. 

\subsection*{Acknowledgments} 
A.A. Young was partly supported by the US
National Science Foundation (NSF Postdoctoral Research Fellowship,
No. DMS-1203744) and J.J. Zhang  by the US
National Science Foundation (No. DMS-1402863).

\section{Definitions}
\label{xxsec1}

Throughout the paper let $\Bbbk$ be a commutative domain, and sometimes we 
further assume that $\Bbbk$ is a field. Modules, vector spaces, algebras, 
tensor products, and morphisms are over $\Bbbk$. All algebras are associative 
with unit.

We will recall some definitions given in \cite{CPWZ1, CPWZ2} and introduce
some new definitions. In particular, we will introduce a new variant of the
discriminant in this section.

Let $B=M_w(R)$ be the $w\times w$-matrix algebra over a commutative domain 
$R$. We have the internal trace 
$$tr_{int}:B\to R, \quad (b_{ij})_{w\times w}\mapsto \sum_{i=1}^w b_{ii}$$
which is the usual matrix trace. Now let $B$ be a general $R$-algebra and
$F$ be a localization of $R$ such that that $B_F:=B\otimes_R F$ is 
finitely generated and free over $F$. Then the left multiplication defines a 
natural embedding of $R$-algebras 
\begin{equation}
\label{E1.0.1}\tag{E1.0.1}
lm: \quad B\to B_F\to \End_{F}(B_F)\cong M_{w}(F),
\end{equation}
where $w$ is the rank $rk_F(B_F)$. We define the {\it regular trace} map
by composing
\begin{equation}
\label{E1.0.2}\tag{E1.0.2}
tr_{reg}: \quad  B \xrightarrow{lm} M_w(F) \xrightarrow{tr_{int}} F
\subseteq Q(R)
\end{equation}
where $Q(R)$ is the field of fractions of $R$. 
Note that $tr_{reg}$ is independent of
the choices of $F$. 
In this paper, a trace (function) means the regular trace unless otherwise
stated. In computation, we also need to assume that the image of
$tr_{reg}$ is in $R$.

Let $R^{\times}$ denote the set of invertible elements in $R$. If 
$f, g\in R$ and $f=cg$ for some $c \in R^{\times}$, then we write 
$f=_{R^{\times}} g$. 

Let $A$ be a domain. We say a normal element $x\in A$ {\it divides} 
$y\in A$ if $y=xz$ for some $z\in A$. If $D$ is a 
set of elements in $A$, a normal element $x \in A$ is called a 
{\it common divisor} of $D$ if $x$ divides $d$ for all $d\in D$. 
We say a normal element $x\in A$ is the {\it greatest common divisor}
or $gcd$ of $D$, denoted by $\gcd_A D$, if
\begin{enumerate}
\item[(1)]
$x$ is a common divisor of $D$, and
\item[(2)]
any common divisor $y$ of $D$ divides $x$.
\end{enumerate}

It follows from part (2) that the gcd of any subset $D\subseteq A$ 
(if it exists) is unique up to a scalar in $A^\times$.

In practice, we often choose a domain $A$ such that 
$R\subseteq A \subseteq B$. Note that given $D\subseteq R$, 
the elements $\gcd_R D, \gcd_A D, \gcd_B D$ may not all exist. 
Even when they exist, they may not be equal.

\begin{definition}
\label{xxdef1.1}
Let $R$ be a commutative domain and $B$ be an $R$-algebra.
Suppose that the image of $tr:=tr_{reg}$ in \eqref{E1.0.2} is 
in $R$. Let $(r,p)$ be a pair of positive integers.
Let $A$ be a fixed domain between $R$ and $B$ in part (3).
\begin{enumerate}
\item[(1)]
\cite[Definition 1.2(1)]{CPWZ2}
Let $Z=\{z_i\}_{i=1}^r$ and $Z'=\{z'_i\}_{i=1}^r$ be two 
$r$-element subsets of $B$. The {\it discriminant} of the pair 
$(Z,Z')$ is defined to be
$$d_r(Z,Z')=\det(tr(z_iz'_j)_{r\times r})\in R.$$
\item[(2)]
The {\it $p$-power discriminant ideal of rank $r$}, denoted by 
$D_{r}^{[p]}(B/R)$, is the ideal of $R$ generated by the set
of elements of the form 
\begin{equation}
\label{E1.1.1}\tag{E1.1.1}
d_{r}(Z_1, Z_2)d_{r}(Z_3, Z_4)\cdots d_{r}(Z_{2p-1}, Z_{2p})
\end{equation}
for all possible $r$-element subsets $Z_1,Z_2, \cdots, Z_{2p}\subseteq B$.
\item[(3)]
The {\it $p$-power discriminant of rank $r$}, denoted by 
$d_{r}^{[p]}(B/R)$, is defined to be the gcd in $A$ of the elements 
of the form \eqref{E1.1.1}. Equivalently, the $p$-power discriminant 
$d_{r}^{[p]}(B/R)$ of rank $r$ is the gcd in $A$ of the elements in 
$D_{r}^{[p]}(B/R)$.
\end{enumerate}
\end{definition} 

The notation $d_{r}^{[p]}(B/R)$ suppresses the dependence of
the $p$-power discriminant of rank $r$ on the choice of an intermediate
domain $A$. In applications, this choice of $A$ will be clearly specified. 
Allowing  different choices of $A$, as well as different $p$ and $r$, 
increases the probability for the existence of the $\gcd$ of
elements in \eqref{E1.1.1}. When $p=1$, 
the above definition agrees with \cite[Definition 1.2]{CPWZ2}.
If $d_{r}^{[p]}(B/R)$ exists, then the ideal of $A$ generated by 
$d_{r}^{[p]}(B/R)$ is the smallest principal ideal of $A$ which is 
generated by a normal element and contains $D_{r}^{[p]}(B/R)$.

The following lemma is \cite[Proposition 1.4(3)]{CPWZ1}.

\begin{lemma}
\label{xxlem1.2}
Suppose $B$ is finitely generated and free over $R$ of
rank $r$. Then 
$$d^{[p]}_r(B/R)=_{A^{\times}} (d_{r}(B/R))^{p}$$
and $D^{[p]}_r(B/R)$ is the principal ideal of $R$ generated
by $(d_{r}(B/R))^{p}$.
\end{lemma}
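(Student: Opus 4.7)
The plan is to reduce everything to the distinguished basis of $B$ as a free $R$-module and to show that every generator of the $p$-power discriminant ideal is (up to a unit in $R$) a single fixed power $(d_r(B/R))^p$. Fix an $R$-basis $E=\{e_1,\dots,e_r\}$ of $B$ and write $d_r(B/R):=d_r(E,E)=\det(\tr(e_ie_j))$. For an arbitrary $r$-element subset $Z=\{z_1,\dots,z_r\}\subseteq B$, let $M_Z\in M_r(R)$ be the change-of-basis matrix defined by $z_i=\sum_k (M_Z)_{ik}\,e_k$.

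First I would verify the \emph{covariance identity} for the trace-bilinear form. For two $r$-element subsets $Z,Z'\subseteq B$, expanding $\tr(z_iz'_j)$ by $R$-bilinearity of $\tr$ yields
\begin{equation*}
\bigl(\tr(z_iz'_j)\bigr)_{r\times r} \;=\; M_Z\,\bigl(\tr(e_ke_\ell)\bigr)_{r\times r}\,M_{Z'}^T,
\end{equation*}
and hence, by multiplicativity of determinants,
\begin{equation*}
d_r(Z,Z')\;=\;\det(M_Z)\,\det(M_{Z'})\,d_r(B/R).
\end{equation*}
This is the one nontrivial step, but it is a direct matrix computation; once it is in hand everything else follows formally.

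Substituting the covariance identity into a typical generator \eqref{E1.1.1} of $D_r^{[p]}(B/R)$ gives
\begin{equation*}
d_r(Z_1,Z_2)\cdots d_r(Z_{2p-1},Z_{2p}) \;=\; \Bigl(\prod_{i=1}^{2p}\det(M_{Z_i})\Bigr)\,(d_r(B/R))^p,
\end{equation*}
which is an $R$-multiple of $(d_r(B/R))^p$. Consequently $D_r^{[p]}(B/R)\subseteq (d_r(B/R))^p R$. The reverse inclusion is obtained by choosing $Z_1=\cdots=Z_{2p}=E$: then every $M_{Z_i}$ is the identity and the corresponding generator is exactly $(d_r(B/R))^p$. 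Thus $D_r^{[p]}(B/R)$ is the principal ideal of $R$ generated by $(d_r(B/R))^p$.

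For the gcd statement in the intermediate domain $A$, the same two observations apply: every element of the form \eqref{E1.1.1} is divisible by $(d_r(B/R))^p$ in $R$, hence in $A$; and the element $(d_r(B/R))^p$ itself appears in the family (via $Z_i=E$). Therefore any common divisor in $A$ of the family divides $(d_r(B/R))^p$, while $(d_r(B/R))^p$ is itself a common divisor, so $d_r^{[p]}(B/R)=_{A^\times}(d_r(B/R))^p$ by the uniqueness of the gcd up to units in $A^\times$. The only potential obstacle is the covariance identity; since $\tr$ is regular and $R$-linear by construction in \eqref{E1.0.1}--\eqref{E1.0.2}, this is routine.
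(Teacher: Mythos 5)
Your proposal is correct and follows essentially the same route as the paper: both hinge on the covariance identity $d_r(Z,Z')=\det(M_Z)\det(M_{Z'})\,d_r(B/R)$ obtained by expanding the trace form over a fixed $R$-basis, after which the ideal and gcd statements follow by noting that every generator \eqref{E1.1.1} is an $R$-multiple of $(d_r(B/R))^p$ and that this element itself occurs among the generators. The paper compresses the final step to ``the assertion follows from the definition,'' whereas you spell it out, but there is no substantive difference.
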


\begin{proof} Let $X=\{x_1,\cdots,x_r\}$ be a basis 
of $B$ over $R$. By \cite[Definition 1.3(3)]{CPWZ1},
$d_r(B/R)=_{R^{\times}}d_r(X,X)$.
For any $r$-element subset
$Z:=\{z_1,\cdots,z_r\}\subset B$, we can write 
$z_i=\sum_{j} r_{ij} x_j$ for an $r\times r$-matrix
$(r_{ij})$. Similarly for $Z'$. Then 
$$d_r(Z,Z')=d_r(X,X) \det(r_{ij}) \det (r'_{ij})
=d_r(B/R)\det(r_{ij}) \det (r'_{ij}).$$
Then the assertion follows from the definition.
\end{proof}

\begin{lemma}
\label{xxlem1.3}
Let $\Psi$ be a subset of $B$ that generates $B$ as an $R$-module.
\begin{enumerate}
\item[(1)]
$D^{[p]}_r(B/R)$ is the ideal of $R$ generated by the set
\begin{equation}
\label{E1.3.1}\tag{E1.3.1}
\{ d_r( X_1,X_2)\cdots d_r(X_{2p-1},X_{2p}) \mid 
X_i\subseteq \Psi,\; \forall \; i\}.
\end{equation}
\item[(2)]
$d^{[p]}_r(B/R)$ is the $\gcd$ in $A$ of elements in set 
\eqref{E1.3.1}.
\end{enumerate}
\end{lemma}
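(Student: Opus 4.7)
The plan is to use the multilinearity and alternation properties of the trace-form determinant $d_r(-,-)$ to reduce arbitrary $r$-element subsets of $B$ to $r$-element subsets of the generating set $\Psi$. Part (2) will then follow from Part (1) via a general principle relating gcds in $A$ to generated ideals in $R$.

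For Part (1), denote by $I$ the ideal of $R$ generated by the set \eqref{E1.3.1}. The inclusion $I \subseteq D_r^{[p]}(B/R)$ is immediate since any $r$-element subset of $\Psi$ is in particular an $r$-element subset of $B$. For the reverse inclusion, take arbitrary $r$-element subsets $Z_1,\ldots,Z_{2p} \subseteq B$ and expand each element of each $Z_i$ as an $R$-linear combination of elements of $\Psi$. Viewed as a function of two ordered $r$-tuples, $d_r(Z,Z')$ is $R$-multilinear in each tuple (since $\tr(z_i z_j')$ is $R$-linear in each argument) and alternating in each tuple (swapping two entries of $Z$ swaps two rows of $(\tr(z_i z_j'))_{r\times r}$, changing the sign of the determinant; similarly for $Z'$). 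Expanding $d_r(Z_{2k-1},Z_{2k})$ multilinearly and using alternation to kill the terms in which an element of $\Psi$ appears more than once, one writes it as an $R$-linear combination of discriminants $d_r(X_{2k-1},X_{2k})$ with $X_{2k-1},X_{2k}$ honest $r$-element subsets of $\Psi$. Multiplying the $p$ resulting expansions expresses $d_r(Z_1,Z_2)\cdots d_r(Z_{2p-1},Z_{2p})$ as an $R$-linear combination of elements of \eqref{E1.3.1}, and it therefore lies in $I$.

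Part (2) follows from Part (1) together with the observation that the $A$-gcd of a set $S \subseteq R$ depends only on the ideal of $R$ generated by $S$. Indeed, since $R \subseteq Z(A)$, any normal element $x \in A$ dividing every element of $S$ automatically divides every $R$-linear combination of those elements, hence every element of the generated ideal; conversely, $S$ sits inside that ideal, so common divisors of any generating set are common divisors of $S$. Thus the $A$-gcd is determined by the generated $R$-ideal. Since \eqref{E1.1.1} and \eqref{E1.3.1} generate the same $R$-ideal by Part (1), their $A$-gcds agree up to $A^{\times}$, and by Definition \ref{xxdef1.1}(3) this common gcd is $d_r^{[p]}(B/R)$.

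The only delicate point is the collapse from arbitrary $r$-tuples in $\Psi$ down to $r$-element subsets of $\Psi$ in the multilinear expansion: this is where alternation of the determinant is essential, since otherwise one would only obtain membership in a potentially larger ideal generated by trace-form determinants with possibly repeated arguments. Everything else is routine multilinear bookkeeping.
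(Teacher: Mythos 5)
Your proposal is correct and follows essentially the same route as the paper: expand each $Z_i$ in terms of $\Psi$ and use bilinearity of $\tr$ together with multilinearity (and, as you rightly note, alternation to discard repeated entries) of $\det$ to write each $d_r(Z,Z')$ as an $R$-linear combination of $d_r(X,X')$ with $X,X'\subseteq\Psi$, then observe that the $A$-gcd depends only on the generated $R$-ideal. The paper's proof is just a terser version of the same argument, leaving the alternation step and the gcd-versus-ideal observation implicit.
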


\begin{proof} Every element $z\in B$ is an $R$-linear 
combination of $\phi_i\in \Psi$. By bilinearity of 
$tr(zz')$ and multi-linearity of $\det$, every $d_r(Z,Z')$ 
is an $R$-linear combination of $d_r(X,X')$ where $X,X'$ 
are $r$-element subsets of $\Psi$. Therefore every element 
of the form \eqref{E1.1.1} is an $R$-linear combination of
elements in \eqref{E1.3.1}. The assertions follow.
\end{proof}

In this paper we will see that some discriminants satisfy
the following.

\begin{definition}
\label{xxdef1.4} Retain the notation as in Definition
{\rm{\ref{xxdef1.1}}}. The $p$-power $r$-rank 
discriminant $d^{[p]}_r(B/R)$ is called {\it stable} 
if
$$d^{[ip]}_r(B/R)=_{A^{\times}} (d^{[p]}_r(B/R))^i$$
for all positive integers $i$.
\end{definition}

Under the hypotheses of Lemma \ref{xxlem1.2}, 
$d^{[p]}_r(B/R)$ is always stable for every $p$.

\section{Properties of the discriminant}
\label{xxsec2}

In this section we list of elementary properties of 
$d^{[p]}_r(B/R)$. The following lemma is similar to
\cite[Lemma 1.4]{CPWZ2}.

\begin{lemma}
\label{xxlem2.1} Suppose that the image of the regular trace
$\tr$ is in $R$. Let $g$ be an automorphism of $B$ such that
$g$ and $g^{-1}$ preserve $R$.
\begin{enumerate}
\item[(1)]
The $p$-power $r$-rank  discriminant ideal $D^{[p]}_r(B/R)$ 
is $g$-invariant. 
\item[(2)]
The $p$-power $r$-rank  discriminant $d^{[p]}_r(B/R)$ 
{\rm{(}}if exists{\rm{)}} is $g$-invariant up to a unit in $A$. 
\item[(3)]
Suppose $r_1\leq r_2$ and $p_1\leq p_2$ are positive integers. 
Then 
$$D^{[p_2]}_{r_2}(B/R) \subseteq D^{[p_1]}_{r_1}(B/R).$$
If both $d^{[p_2]}_{r_2}(B/R)$ and  $d^{[p_1]}_{r_1}(B/R)$ exist,
then 
$$d^{[p_1]}_{r_1}(B/R)\mid d^{[p_2]}_{r_2}(B/R).$$
As a consequence, the quotient 
$d^{[p_2]}_{r_2}(B/R)/d^{[p_1]}_{r_1}(B/R)$ is $g$-invariant 
up to a unit in $A$. 
\end{enumerate}
\end{lemma}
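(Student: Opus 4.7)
The plan for part (1) is to first establish the key equivariance property of the regular trace: if $g$ is an algebra automorphism of $B$ whose restriction to $R$ is an automorphism, then $g$ extends naturally to the localization $B_F = B \otimes_R F$, and one obtains $\tr(g(b) g(b')) = g(\tr(b b'))$ for all $b, b' \in B$. This follows from the intrinsic (basis-independent, conjugation-invariant) nature of the matrix trace on $\End_F(B_F)$, since changing basis by the $F$-linear map induced by $g$ turns the left multiplication matrix $lm(b)$ from \eqref{E1.0.1} into $lm(g(b))$. Consequently, for any two $r$-element subsets $Z, Z' \subseteq B$, we get $d_r(g(Z), g(Z')) = g(d_r(Z, Z'))$. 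Thus the generating set \eqref{E1.1.1} of $D^{[p]}_r(B/R)$ is permuted by $g$, so the $R$-ideal it spans is $g$-stable.

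Part (2) should be a short consequence of part (1) together with the uniqueness of the gcd up to a unit in $A$: since $g$ permutes the generating set of $D^{[p]}_r(B/R)$, the element $g(d^{[p]}_r(B/R))$ remains a common divisor of that set and itself satisfies the universal property of a gcd; hence it must differ from $d^{[p]}_r(B/R)$ by a unit in $A$.

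For part (3), the ideal inclusion splits into two independent steps. First, increasing $p$ shrinks the ideal because a product of $p_2$ discriminants can be rewritten as a $p_1$-fold product (a generator of $D^{[p_1]}_{r_1}$) times the remaining $p_2 - p_1$ discriminants, which lie in $R$. Second, for fixed $p$, increasing $r$ shrinks the ideal via Laplace (cofactor) expansion: for any $r_2$-element subsets $Z, Z' \subseteq B$, the determinant $d_{r_2}(Z, Z')$ is a signed sum of products of $r_1 \times r_1$ minors and complementary $(r_2-r_1) \times (r_2-r_1)$ minors of the trace matrix, and each $r_1 \times r_1$ minor is precisely $d_{r_1}(X, X')$ for $r_1$-element subsets $X \subseteq Z$, $X' \subseteq Z'$. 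Combining the two steps, every generator of $D^{[p_2]}_{r_2}(B/R)$ lies in $D^{[p_1]}_{r_1}(B/R)$. The divisibility $d^{[p_1]}_{r_1}(B/R) \mid d^{[p_2]}_{r_2}(B/R)$ then follows because $d^{[p_1]}_{r_1}(B/R)$, being a common $A$-divisor of every generator of $D^{[p_1]}_{r_1}(B/R)$, divides every $R$-linear combination of those generators (here normality in $A$ is used to combine the quotients), hence in particular divides every generator of $D^{[p_2]}_{r_2}(B/R)$, and so divides their gcd by the universal property. The $g$-invariance of the quotient is immediate from part (2) applied to both numerator and denominator.

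The step most likely to require care is the equivariance $\tr \circ g = g \circ \tr$ in part (1), since the trace is only defined after passage to the fraction field $F$; one must confirm that the induced action of $g$ on $F$ intertwines the two left-multiplication representations of $B$ on $B_F$ correctly, and that the image still lands in $R$ as assumed. Once that is in place, parts (2) and (3) are essentially formal consequences of gcd uniqueness and Laplace expansion.
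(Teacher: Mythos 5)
Your proposal is correct and follows essentially the same route as the paper, which simply cites \cite[Lemma 1.8(3)]{CPWZ1} for the equivariance $g(d_r(Z,Z'))=d_r(g(Z),g(Z'))$ in part (1) and \cite[Lemma 1.4(5)]{CPWZ2} for the $p_1=p_2=1$ case of part (3); you supply the content of those cited ingredients inline (trace equivariance via the induced map on $B_F$, Laplace expansion for the rank comparison, and splitting off the extra $p_2-p_1$ factors for the power comparison), all of which is sound. One small wording caveat: the map $B_F\to B_F$ induced by $g$ is only $g|_F$-semilinear rather than $F$-linear, so the matrix trace is not literally preserved under this change of basis; instead the matrix of $lm(g(b))$ in the transported basis is $g$ applied entrywise to that of $lm(b)$, which gives precisely the twisted identity $\tr(g(b))=g(\tr(b))$ that your argument requires.
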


\begin{proof}
(1) By \cite[Lemma 1.8(3)]{CPWZ1}, $g(d_r(Z,Z'))=d_r(g(Z), g(Z'))$.
Then $g$ maps an element of the form \eqref{E1.1.1} to another
element of the same form. Similarly, this holds for $g^{-1}$.
Hence, $g$ (and $g^{-1}$) preserves $D^{[p]}_r(B/R)$.

(2) This follows from part (1) and the fact that the 
$\gcd$ is well-defined up to a unit. 

(3) When $p_1=p_2=1$, this is \cite[Lemma 1.4(5)]{CPWZ2}. For 
general $p_1\leq p_2$, the proof is similar to the proof of 
\cite[Lemma 1.4(5)]{CPWZ2}, so it is omitted.
\end{proof}

We recall some definitions from \cite[p.766]{CPWZ2}. Let $C$ be a 
domain such that $\Bbbk\subseteq C$ and that $C/\Bbbk$ is $\Bbbk$-flat. 
We say that $A\otimes C$ is $A$-closed if, for every 
$0\neq f\in A$ and $x,y\in A\otimes C$, the equation $xy=f$ implies
that $x,y\in A$ up to units of $A\otimes C$. For example, if $C$ is
connected graded and $A\otimes C$ is a domain, then $A\otimes C$ is
$A$-closed. The next lemma is similar to \cite[Lemma 1.12]{CPWZ2}.

\begin{lemma}
\label{xxlem2.2}
Retain the hypotheses as above. Assume that $B\otimes C$ 
is a domain.
\begin{enumerate}
\item[(1)]
$D^{[p]}_r(B\otimes C/R\otimes C)=D^{[p]}_r(B/R)\otimes C$.
\item[(2)]
Suppose $A\otimes C$ is $A$-closed. If $d^{[p]}_r(B/R)$ exists,
then $d^{[p]}_r(B\otimes C/R\otimes C)$ exists and
equals $d^{[p]}_r(B/R)$.
\end{enumerate}
\end{lemma}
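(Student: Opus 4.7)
The plan is to mirror the argument of \cite[Lemma 1.12]{CPWZ2}. The essential preliminary observation is that the regular trace is compatible with flat base change: since $C/\Bbbk$ is flat, if $F$ is a localization of $R$ such that $B_F$ is finitely generated and free of rank $w$ over $F$, then $F\otimes C$ is a localization of $R\otimes C$ and $(B\otimes C)_{F\otimes C}\cong B_F\otimes C$ is free of rank $w$ over $F\otimes C$. Picking a basis of the form $\{x_i\otimes 1\}_{i=1}^{w}$, the matrix of left multiplication by $b\otimes c$ is obtained from that of left multiplication by $b$ by scaling each entry by $c$, so the regular trace of $B\otimes C$ over $R\otimes C$ equals $tr_{reg}\otimes \mathrm{id}_C$; in particular, its image lies in $R\otimes C$ and Definition \ref{xxdef1.1} is applicable.

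For part (1), I invoke Lemma \ref{xxlem1.3}. If $\Psi\subseteq B$ is an $R$-module generating set for $B$, then $\Psi\otimes 1$ is an $R\otimes C$-module generating set for $B\otimes C$. For any $r$-element subsets $X_1,\dots,X_{2p}\subseteq \Psi$, the trace compatibility above shows that $d_r(X_{2i-1},X_{2i})$, computed in $B\otimes C/R\otimes C$, equals the same element of $R$ as the one computed in $B/R$. Thus the generating set for $D^{[p]}_r(B\otimes C/R\otimes C)$ produced by Lemma \ref{xxlem1.3}(1) coincides, as a subset of $R\subseteq R\otimes C$, with the generating set for $D^{[p]}_r(B/R)$. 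By flatness of $C/\Bbbk$, the $R\otimes C$-ideal generated by a subset of $R$ equals the tensor product of the corresponding $R$-ideal with $C$, yielding $D^{[p]}_r(B\otimes C/R\otimes C)=D^{[p]}_r(B/R)\otimes C$.

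For part (2), set $d:=d^{[p]}_r(B/R)\in A$ and let $\mathcal{F}$ denote the set in \eqref{E1.3.1}. Since $d$ divides every $f\in\mathcal{F}$ in $A$, it divides them in $A\otimes C$ as well, so $d$ is a common divisor of $\mathcal{F}$ in $A\otimes C$. For the reverse direction, let $e\in A\otimes C$ be any common divisor of $\mathcal{F}$, pick a nonzero $f_0\in\mathcal{F}$, and write $f_0=eq_0$ with $q_0\in A\otimes C$. The $A$-closed hypothesis yields a unit $u\in(A\otimes C)^\times$ and an element $e'\in A$ with $e=ue'$. Applying $A$-closedness once more to each factorization $f=e'q_f$ (for $f\in\mathcal{F}$) and tracking the resulting units shows that $e'$ divides $f$ inside $A$. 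Hence $e'$ is a common divisor of $\mathcal{F}$ in $A$, so by definition of $d$ as the gcd in $A$ we have $e'\mid d$ in $A$, and therefore $e\mid d$ in $A\otimes C$. This establishes that $d$ is a gcd of $\mathcal{F}$ in $A\otimes C$, proving $d^{[p]}_r(B\otimes C/R\otimes C)=d^{[p]}_r(B/R)$ up to a unit of $A\otimes C$.

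The main obstacle is the descent in part (2): the $A$-closed hypothesis immediately produces $e'\in A$ with $e=ue'$, but one must carefully unwind the ``up to units'' language to deduce that $e'$ is itself a common divisor of $\mathcal{F}$ in $A$, rather than merely in $A\otimes C$. This bookkeeping with units is precisely the place where the $A$-closed hypothesis is genuinely exploited, and is the only substantive step beyond the bilinearity considerations used to prove part (1).
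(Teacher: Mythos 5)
Your proposal is correct and follows essentially the same route as the paper: part (1) via the generating set of Lemma \ref{xxlem1.3}(1) and compatibility of the regular trace with the flat base change $-\otimes C$, and part (2) by using $A$-closedness to replace an arbitrary common divisor in $A\otimes C$ by one lying in $A$ (up to a unit) and then invoking the gcd property of $d^{[p]}_r(B/R)$ in $A$. The only differences are cosmetic — you spell out the trace compatibility and the unit bookkeeping that the paper leaves implicit.
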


\begin{proof} (1) First of all, the regular trace $\tr$ of $B\otimes C$
over $R\otimes C$ is equal to the regular trace $\tr$ of $B$
over $R$ when restricted to elements in $B$. 

Let $\Psi$ be a subset of $B$ such that 
$B$ is generated by $\Psi$ as an $R$-module. Then $B\otimes C$
is generated by $\Psi$ as an $R\otimes C$-module. By 
Lemma \ref{xxlem1.3}(1), $D^{[p]}_r(B\otimes C/R\otimes C)$ is the 
ideal of $R\otimes C$ generated by the set \eqref{E1.3.1}, 
which is just $D^{[p]}_r(B/R)\otimes C$ by Lemma \ref{xxlem1.3}(1).

(2) Suppose $d:=d^{[p]}_r(B/R)$ exists. Then it is the 
$\gcd$ in $A$ of the set $D^{[p]}_r(B/R)$ by definition.
By part (1), $d^{[p]}_r(B\otimes C/R\otimes C)$ (if exists)
is the $\gcd$ in $A\otimes C$ of the set 
$D^{[p]}_r(B/R)\otimes C$, which is the $\gcd$ in $A\otimes C$
of the set $D^{[p]}_r(B/R)$.

Let $d'\in A\otimes C$ be a common divisor in $A\otimes C$ of the set 
$D^{[p]}_r(B/R)$. By $A$-closedness of $A\otimes C$, we
may assume that $d'$ is in $A$ (up to a unit). This implies
that $d'$ divides $d$. It is clear that $d$ is a common
divisor in $A\otimes C$ of the set $D^{[p]}_r(B/R)$. Therefore
$d$ is the $\gcd$ in $A\otimes C$ of the set $D^{[p]}_r(B/R)$.
The assertion follows.
\end{proof}

Let $F$ be a localization of a commutative domain $R$ (and $R$ is
the center $Z(B)$ in most of applications) and 
$F$ may not be the fraction field of $R$. We assume that 
$B_F:=B\otimes_R F$ is finitely generated and free over $F$. 
We recall a definition from \cite{CPWZ2}.

\begin{definition}\cite[Definition 1.10]{CPWZ2}
\label{xxdef2.3}
Retain the above notations.
\begin{enumerate}
\item[(1)]
A subset $\bfb =\{b_1,\cdots, b_w\} \subseteq B$ is called a 
{\it semi-basis} of $B$ if it is an $F$-basis of $B_F$, 
where $b_i$ is viewed as $b_i\otimes_R 1 \in B_F$. 
In this case $w$ is the rank of $B$ over $R$. 
\item[(2)]
Let $\bfb$ be a semi-basis of $B$ and $T$ be a subset of $B$
containing $\bfb$ which generates $B$ as an $R$-module. We call such a set $T$ an
$R$-generating set of $B$. Then $\bfb$ is called a
{\em{quasi-basis}} (with respect to $T$) of $B$ if every $t \in T$ can be
written as $t=c b$ for some $b \in \bfb$. We denote $c$ by $( t:b )$.

%
\end{enumerate}
\end{definition}

We continue to introduce some notation. Again 
let $w$ be the rank of $B$ over $R$. Let 
$Z:=\{z_1, \cdots, z_w\}$ be a subset of $B$. If $\bfb$ 
is a semi-basis, then for each $i$,
$$z_i=\sum_{j=1}^w a_{ij} b_j$$
for some $a_{ij}\in F$.
In this case, the $w\times w$-matrix $(a_{ij})_{w\times w}$ 
is denoted by $(Z:\bfb)$. Let $T$ be as in Definition 
\ref{xxdef2.3}(2). Let $T/\bfb$ denote the subset of 
$F$ consisting of nonzero scalars of the form $\det(Z:\bfb)$ 
for all $Z\subseteq T$ with $|Z|=w$. Let
\begin{equation}
\label{E2.3.1}\tag{E2.3.1}
{\mathcal D}(T/\bfb) = \{d_w(\bfb,\bfb)f f'\mid
f,f'\in T/\bfb\},
\end{equation}
and
\begin{equation}
\label{E2.3.2}\tag{E2.3.2}
{\mathcal D}_w(T)=\{ d_w(Z,Z')\mid Z, Z' \subseteq T,\; |Z|=|Z'|=w\}.
\end{equation}
Note that if $Z$ and $Z'$ are $w$-element subsets of $T$, 
then $d_w(Z,Z')\in {\mathcal D}(T/\bfb)$ by \cite[(1.10.1)]{CPWZ2}.
In fact, we have ${\mathcal D}(T/\bfb)={\mathcal D}_w(T)$.

If $\bfb =\{b_1. \cdots, b_w\}$ is a quasi-basis with respect to an $R$-generating set $T$. Then for each $i$, let 
\begin{equation}
\label{E2.3.3}\tag{E2.3.3}
C_i = \{ (t:b_i) \mid t\in T\}\backslash\{0\}.
\end{equation}
It is 
easy to see that every element in $T/\bfb$ is of the form 
$c_1c_2\cdots c_w$, where $c_i\in C_i$ for each $i$. Let
\begin{equation}
\label{E2.3.4}\tag{E2.3.4}
{\mathcal D}^c(T/\bfb)=\left\{ d_w (\bfb,\bfb) \prod_{i=1}^w (c_i c_i')
\, \middle| \, c_i, c'_i\in C_i\right\}.
\end{equation}

If $\bfb$ is a quasi-basis with respect to $T$, 
then ${\mathcal D}(T/\bfb) ={\mathcal D}^c(T/\bfb)$.

Let $S$ be a subset of an algebra $R$. Let $S^p$ denote
the subset of $R$ consisting of $s_1s_2\cdots s_p$ 
for all $s_i\in S$.
The following lemma is similar to \cite[Lemma 1.11]{CPWZ2}.

\begin{lemma}
\label{xxlem2.4}
Let $T$ be a set of generators of $B$ as an $R$-module and 
$w={\mathrm{rank}}(B/R)$. Let $p$ be a positive integer.
\begin{enumerate}
\item[(1)]
$D^{[p]}_w(B/R)$ is generated by the set 
${\mathcal D}_w(T)^p$.
\item[(2)]
$d^{[p]}_w(B/R)=_{A^{\times}} \gcd {\mathcal D}_w(T)^p$.
\item[(3)]
If $\bfb$ is a semi-basis of $B$, then 
$d^{[p]}_w(B/R)=\gcd {\mathcal D}(T/\bfb)^p$.
\item[(4)]
If $\bfb$ is a quasi-basis of $B$, then 
$d^{[p]}_w(B/R)=\gcd ({\mathcal D}^c(T/\bfb))^p$.
\end{enumerate}
\end{lemma}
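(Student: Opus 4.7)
The four statements telescope: each part reduces to the previous via an identification of two discriminant sets. The plan is to verify (1) directly from Lemma~\ref{xxlem1.3}, pass to (2) by a standard gcd manipulation, and then obtain (3) and (4) by matching $\mathcal{D}(T/\bfb)$ and $\mathcal{D}^c(T/\bfb)$ with $\mathcal{D}_w(T)$ up to inessential zeros or signs.

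For (1), I would apply Lemma~\ref{xxlem1.3}(1) with $\Psi=T$ and $r=w$. The resulting generating set is exactly the set of products $d_w(X_1,X_2)\cdots d_w(X_{2p-1},X_{2p})$ with each $X_i$ a $w$-element subset of $T$, which by definition is $\mathcal{D}_w(T)^p$. For (2), every element of $D_w^{[p]}(B/R)$ is an $R$-linear combination of elements of $\mathcal{D}_w(T)^p$ by (1), so any common divisor in $A$ of $\mathcal{D}_w(T)^p$ is a common divisor of $D_w^{[p]}(B/R)$; conversely $\mathcal{D}_w(T)^p\subseteq D_w^{[p]}(B/R)$ gives the reverse divisibility. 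Hence the two gcds in $A$ coincide up to a unit in $A$.

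For (3), I would invoke the identity $d_w(Z,Z') = d_w(\bfb,\bfb)\det(Z:\bfb)\det(Z':\bfb)$ noted right after \eqref{E2.3.2} (compare \cite[(1.10.1)]{CPWZ2}). When $\bfb$ is a semi-basis and $Z,Z'$ are $w$-element subsets of $T$, the right-hand side is either zero or an element of $\mathcal{D}(T/\bfb)$, so the two sets agree after discarding zeros. Since zeros do not affect the gcd, (3) follows from (2). For (4), the quasi-basis hypothesis that every $t\in T$ has the form $cb$ for some $b\in\bfb$ forces the matrix $(Z:\bfb)$ to have at most one nonzero entry per row; its determinant is therefore zero unless the elements of $Z$ lie over distinct basis vectors, in which case $\det(Z:\bfb) = \pm\prod_i c_i$ with $c_i\in C_i$. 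Running over all admissible $w$-subsets of $T$ recovers every product $\prod_i c_i$ with $c_i\in C_i$, so $\mathcal{D}(T/\bfb)$ and $\mathcal{D}^c(T/\bfb)$ agree up to signs, and (4) follows from (3).

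The main point requiring care is the bookkeeping in (4): I must check that as $Z$ varies over $w$-subsets of $T$ with $\det(Z:\bfb)\neq 0$, the induced permutation of $\bfb$ combined with the independent choices $c_i\in C_i$ really ranges over all of $\mathcal{D}^c(T/\bfb)$, with no sign obstruction beyond a unit. Parts (1)--(3) are essentially formal consequences of Lemma~\ref{xxlem1.3}, the bilinearity of $d_w(\cdot,\cdot)$, and the principle that the gcd of an ideal equals the gcd of any of its generating sets.
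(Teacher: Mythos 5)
Your proposal is correct and follows essentially the same route as the paper, whose proof consists of citing Lemma~\ref{xxlem1.3}(1) for part (1) and appealing to ``the definition, the above discussion and part (1)'' for parts (2)--(4) --- the ``above discussion'' being precisely the identifications $\mathcal{D}(T/\bfb)=\mathcal{D}_w(T)$ and $\mathcal{D}(T/\bfb)=\mathcal{D}^c(T/\bfb)$ that you verify in detail. Your added care about discarding zeros and about the sign/permutation bookkeeping in (4) is sound and only fills in what the paper leaves implicit.
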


\begin{proof} (1) This follows from Lemma \ref{xxlem1.3}(1).

(2), (3) and (4) follow from the definition, the above 
discussion and part (1).
\end{proof}

In the rest of the section we assume that $B^1$ and $B^2$ are 
two algebras that are $\Bbbk$-flat. If $X^1\subseteq B^1$ 
and $X^2\subseteq B^2$ are two subsets, then $X^1\otimes X^2$ 
denotes the set $\{x\otimes y\mid x\in X^1, y\in X^2\}$.
We say that the pair $(X^1,X^2)$ is {\it hereditary} 
if  for $x\in X^1$ and $y\in X^2$, every divisor of $x\otimes y$
is of the form $x'\otimes y'$ (up to a unit in $B^1\otimes B^2$). 
The following lemma is easy.

\begin{lemma}
\label{xxlem2.5}
Let $X^1\subseteq B^1$ 
and $X^2\subseteq B^2$ be two subsets such that
\begin{enumerate}
\item[(a)] 
$\gcd X^1$ and $\gcd X^2$ exist.
\item[(b)]
$(X^1, X^2)$ is hereditary.
\end{enumerate} 
Then $\gcd (X^1\otimes X_2)=_{(B^1\otimes B^2)^{\times}}
\gcd X^1\otimes \gcd X^2$. \hfill\qed
\end{lemma}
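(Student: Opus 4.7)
Let $d_1 = \gcd X^1 \in B^1$ and $d_2 = \gcd X^2 \in B^2$, both of which exist by hypothesis (a). My plan is to verify, up to a unit in $B^1 \otimes B^2$, the two defining properties of $\gcd(X^1 \otimes X^2)$: first, that $d_1 \otimes d_2$ is itself a common divisor of $X^1 \otimes X^2$; and second, that every common divisor of $X^1 \otimes X^2$ in $B^1 \otimes B^2$ divides $d_1 \otimes d_2$.

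For the first property, I would begin by noting that $d_1 \otimes d_2$ is normal in $B^1 \otimes B^2$: the automorphisms $\sigma_i \in \Aut(B^i)$ implementing the normality of each $d_i$ combine to give $\sigma_1 \otimes \sigma_2$ as the automorphism witnessing normality of $d_1 \otimes d_2$. Given any $x \otimes y \in X^1 \otimes X^2$, using $d_1 \mid x$ in $B^1$ and $d_2 \mid y$ in $B^2$ I write $x = d_1 \tilde{x}$ and $y = d_2 \tilde{y}$, whence
\[
x \otimes y \;=\; (d_1 \tilde{x}) \otimes (d_2 \tilde{y}) \;=\; (d_1 \otimes d_2)(\tilde{x} \otimes \tilde{y}),
\]
so $d_1 \otimes d_2$ divides $x \otimes y$.

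For the second property, let $z$ be any common divisor of $X^1 \otimes X^2$ in $B^1 \otimes B^2$. Fixing some $x_0 \in X^1$ and $y_0 \in X^2$, the hereditary hypothesis (b) applied to the relation $z \mid x_0 \otimes y_0$ writes $z = u \cdot (z_1 \otimes z_2)$ for a unit $u \in (B^1 \otimes B^2)^\times$ and elements $z_1 \in B^1$, $z_2 \in B^2$; after absorbing $u$ it suffices to show that $z_1 \otimes z_2$ divides $d_1 \otimes d_2$. For each remaining pair $(x,y) \in X^1 \times X^2$ one has $z_1 \otimes z_2 \mid x \otimes y$; applying (b) to the complementary factor (after transferring sides, if needed, using normality of $z_1 \otimes z_2$) forces that factor to be a pure tensor as well, and comparing the resulting pure-tensor expressions for $x \otimes y$ yields $z_1 \mid x$ in $B^1$ and $z_2 \mid y$ in $B^2$. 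Since this holds for every $x \in X^1$ and $y \in X^2$, the defining property of the $\gcd$ gives $z_1 \mid d_1$ and $z_2 \mid d_2$, so $z_1 \otimes z_2 \mid d_1 \otimes d_2$, as required.

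The main technical obstacle is precisely the inference from the tensor-level divisibility $z_1 \otimes z_2 \mid x \otimes y$ to the component-wise divisibilities $z_1 \mid x$ and $z_2 \mid y$. This step needs the hereditary hypothesis to be invoked a second time, on the complementary factor, and then the uniqueness (up to a scalar) of pure-tensor representations in $B^1 \otimes B^2$ under the standing $\Bbbk$-flatness and domain assumptions. The remaining bookkeeping — normality of $d_1 \otimes d_2$, handling the units produced by (b), and tracking left versus right divisibility — is routine given these inputs.
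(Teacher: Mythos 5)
The paper states Lemma \ref{xxlem2.5} with no proof at all (it is asserted to be ``easy'' and closed with a \verb|\qed|), so there is no written argument to compare yours against; judged on its own, your proof is correct and is surely the intended one. Both halves are right: $d_1\otimes d_2$ is a normal common divisor by the pure-tensor factorization $(d_1\tilde x)\otimes(d_2\tilde y)=(d_1\otimes d_2)(\tilde x\otimes\tilde y)$, and the reduction of an arbitrary common divisor $z$ to a pure tensor $z_1\otimes z_2$ via hypothesis (b), followed by $z_i\mid d_i$ and hence $z_1\otimes z_2\mid d_1\otimes d_2$, is the only reasonable route. The one point deserving a remark is the step you yourself flag: passing from $z_1\otimes z_2\mid x\otimes y$ to $z_1\mid x$ and $z_2\mid y$ requires knowing that the \emph{cofactor} $w$ in $x\otimes y=(z_1\otimes z_2)w$ is also a pure tensor, whereas the paper's definition of ``hereditary'' literally speaks only of divisors, and a divisor is by definition a \emph{normal} element --- so $w$ is not formally covered by (b). This is really a looseness in the paper's definition rather than in your argument: in the only place the hereditary property is verified (Lemma \ref{xxlem3.3}), the grading argument shows that \emph{both} factors of any factorization of $f^1\otimes\cdots\otimes f^s$ are $\N^n$-homogeneous, hence pure tensors, which is exactly the strengthened form your proof needs. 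With that reading of (b), your uniqueness-of-pure-tensors step (valid under the standing flatness/domain assumptions) closes the argument, and the normality bookkeeping you defer (that $z_1$ and $z_2$ are individually normal when $z_1\otimes z_2$ is) does go through by hitting $(z_1\otimes z_2)(a\otimes 1)\in B^1z_1\otimes B^2z_2$ with a linear functional in the second slot.
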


\begin{lemma}
\label{xxlem2.6} 
Let $B^1$ and $B^2$ be two $\Bbbk$-algebras 
containing central subalgebra domains $R^1$ and $R^2$
respectively. Let $w_i=\mathrm{rank}_{R^i}(B^i)$ for $i=1,2$. 
Assume that
\begin{enumerate}
\item[(a)]
$R:=R^1\otimes R^2$ is a domain.
\item[(b)]
$\bfb^i$ is a quasi-basis of $B^i$ over $R^i$
with corresponding $R^i$-module generating set $T^i$
{\rm{(}}and $\bfb^i\subseteq T^i${\rm{)}}. 
\end{enumerate}
Then the following hold.
\begin{enumerate}
\item[(1)]
$\bfb:=\bfb^1\otimes \bfb^2$ is a quasi-basis 
of $B:=B^1\otimes B^2$ over $R$ 
with corresponding $R$-generating 
set being $T:=T^1\otimes T^2$.
\item[(2)]
Let $w:=w_1w_2$. Then ${\mathcal D}^c(T/\bfb)=
{\mathcal D}^c(T^1/\bfb^1)^{w_2}\otimes 
{\mathcal D}^c(T^2/\bfb^2)^{w_1}$.
\item[(3)]
Suppose that $({\mathcal D}^c(T^1/\bfb^1)^{w_2},
{\mathcal D}^c(T^2/\bfb^2)^{w_1})$ is hereditary.
Let $p$ be an integer. If $d^{[p]}_{w_1}(B^1/R^1)$
and $d^{[p]}_{w_2}(B^2/R^2)$ are stable discriminants, 
then so is $d^{[p]}_{w}(B/R)$. Further, 
$$d^{[p]}_{w}(B/R)=_{B^{\times}} d^{[p]}_{w_1}(B^1/R^1)^{w_2}
\otimes d^{[p]}_{w_2}(B^2/R^2)^{w_1}.$$
\end{enumerate}
\end{lemma}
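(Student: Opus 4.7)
The plan is to verify the three parts in order, with each building on its predecessors.

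For part (1), I would first localize: let $F^i$ be a localization of $R^i$ over which $B^i_{F^i}$ is free with basis $\bfb^i$, and take $F = F^1 \otimes F^2$. Because $R$ is a domain by hypothesis (a), $F$ embeds inside $Q(R)$, and
$$B_F \;=\; (B^1 \otimes B^2) \otimes_R F \;\cong\; B^1_{F^1} \otimes_{F} B^2_{F^2}$$
is free of rank $w_1 w_2$ on $\bfb^1 \otimes \bfb^2$, so $\bfb$ is a semi-basis. For the quasi-basis condition, any $t^1 \otimes t^2 \in T$ satisfies $t^i = (t^i : b^i) b^i$ for some $b^i \in \bfb^i$; hence
$$t^1 \otimes t^2 \;=\; \bigl((t^1 : b^1)(t^2 : b^2)\bigr)\, (b^1 \otimes b^2),$$
so $(t^1 \otimes t^2 : b^1 \otimes b^2) = (t^1 : b^1)(t^2 : b^2)$, confirming the quasi-basis property.

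For part (2), the key observation is that the regular trace on $B_F$ is multiplicative across the tensor product: left multiplication by $x \otimes y$ on $B^1_{F^1} \otimes B^2_{F^2}$ is the Kronecker product of left multiplications, so $\tr(x \otimes y) = \tr_{B^1/R^1}(x) \cdot \tr_{B^2/R^2}(y)$. Consequently the matrix $\bigl(\tr((b^1_{\alpha} \otimes b^2_{\beta})(b^1_{\alpha'} \otimes b^2_{\beta'}))\bigr)$ is the Kronecker product of the two individual trace matrices, and a standard determinant identity gives
$$d_w(\bfb,\bfb) \;=\; d_{w_1}(\bfb^1,\bfb^1)^{w_2} \otimes d_{w_2}(\bfb^2,\bfb^2)^{w_1}.$$
By part (1), the coefficient set $C_{(\alpha,\beta)}$ equals $C^1_\alpha \cdot C^2_\beta$ inside $R^1 \otimes R^2$. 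Writing a generic element of $\mathcal D^c(T/\bfb)$ as $d_w(\bfb,\bfb) \prod_{\alpha,\beta}(c^1_{\alpha\beta} c^2_{\alpha\beta} c'^1_{\alpha\beta} c'^2_{\alpha\beta})$, each index $\alpha$ contributes $2w_2$ free choices from $C^1_\alpha$ (one pair per $\beta$), and this is precisely what a product of $w_2$ elements of $\mathcal D^c(T^1/\bfb^1)$ contributes on the $R^1$-side; symmetrically on the $R^2$-side. The identification $\mathcal D^c(T/\bfb) = \mathcal D^c(T^1/\bfb^1)^{w_2} \otimes \mathcal D^c(T^2/\bfb^2)^{w_1}$ follows by matching these generic elements.

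For part (3), Lemma \ref{xxlem2.4}(4) together with part (2) yields
$$d^{[p]}_w(B/R) \;=\; \gcd\bigl(\mathcal D^c(T/\bfb)^p\bigr) \;=\; \gcd\bigl(\mathcal D^c(T^1/\bfb^1)^{w_2 p} \otimes \mathcal D^c(T^2/\bfb^2)^{w_1 p}\bigr).$$
The hereditariness hypothesis, via Lemma \ref{xxlem2.5}, splits this $\gcd$ across the tensor product, and Lemma \ref{xxlem2.4}(4) identifies the two factors as $d^{[w_2 p]}_{w_1}(B^1/R^1)$ and $d^{[w_1 p]}_{w_2}(B^2/R^2)$. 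Applying stability of the two factor discriminants gives
$$d^{[p]}_w(B/R) \;=_{B^{\times}}\; d^{[p]}_{w_1}(B^1/R^1)^{w_2} \otimes d^{[p]}_{w_2}(B^2/R^2)^{w_1}.$$
Stability of $d^{[p]}_w(B/R)$ then follows immediately: replacing $p$ by $ip$ scales each exponent in the formula by $i$, which factors out as an overall $i$th power. The main obstacle will be part (2), where the bookkeeping of the multi-indexed coefficient sets under the correspondence $i \leftrightarrow (\alpha,\beta)$ must be checked carefully, so that the independent choices $c^1_{\alpha\beta} \in C^1_\alpha$ and $c^2_{\alpha\beta} \in C^2_\beta$ are seen to reproduce exactly $\mathcal D^c(T^1/\bfb^1)^{w_2} \otimes \mathcal D^c(T^2/\bfb^2)^{w_1}$; once part (2) is in hand, part (3) is a mechanical assembly of Lemmas \ref{xxlem2.4}, \ref{xxlem2.5}, and the stability hypothesis.
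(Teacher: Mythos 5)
Your proposal is correct and follows essentially the same route as the paper: part (1) is the same routine verification that the tensor of quasi-bases is a quasi-basis, part (2) rests on the same Kronecker-product identity $d_w(\bfb,\bfb)=d_{w_1}(\bfb^1,\bfb^1)^{w_2}\otimes d_{w_2}(\bfb^2,\bfb^2)^{w_1}$ together with the same bookkeeping of the coefficient sets $C_{j,k}=C^1_j\cdot C^2_k$, and part (3) is the same assembly of Lemma \ref{xxlem2.4}(4), part (2), Lemma \ref{xxlem2.5}, and stability of the factor discriminants. Your writeup is in fact more explicit than the paper's (e.g.\ in identifying the split factors as $d^{[w_2p]}_{w_1}(B^1/R^1)$ and $d^{[w_1p]}_{w_2}(B^2/R^2)$ before invoking stability), but the underlying argument is identical.
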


\begin{proof}
(1) Let $i$ be either 1 or 2.
Let $\bfb^i=\{b^i_1, b^i_2, \cdots, b^i_{w_i}\}$
and $T^i=\{t^i_{j}\}_{j\in J^i}$. By definition, it 
is routine to check that $\bfb^1\otimes \bfb^2$ 
is a quasi-basis of $B^1\otimes B^2$ over $R$ 
with corresponding $R$-generating 
set being $T:=T^1\otimes T^2$.

(2) Let $1\leq j\leq w_1$ and $1\leq k \leq w_2$. 
Following \eqref{E2.3.3},
define $C^1_j$ be the set of elements $c$ in $B^1$
such that $cb^1_j\in T^1$. Similarly, we define
$C^2_k$. Let
$C_{j,k}$ consist of elements of the form
$c_j\otimes d_k$ where $c_j\in C^1_j$ and
$d_k\in C^2_k$. Then $C_{j,k}$ consist of
elements of the form 
$t (b^1_j\otimes b^2_{k})^{-1}$ for all 
$t\in T$. 

By linear algebra, $d_w(\bfb, \bfb)=d_{w_1}(\bfb^1, \bfb^1)^{w_2}
\otimes d_{w_2}(\bfb^2,\bfb^2)^{w_1}$. And
we have the following computation, for all
$(c_j \otimes d_k), (c'_j \otimes d'_k)\in C_{j,k}$ for different
$(j,k)$,
$$\prod_{j,k} [(c_j \otimes d_k)(c'_j \otimes d'_k)]
\in X^{w_2} \otimes Y^{w_1}$$
where $X=\{ (\prod_{j} (c_j c'_j))\mid c_j, c'_j\in C_j^1\}$ and
$Y=\{(\prod_{k} (d_k d'_k)) \mid d_k, d'_k\in C^2_k\}$. 
Now assertion follows from \eqref{E2.3.4}.

(3) This follows from the definition, Lemma \ref{xxlem2.4}(4),
part (2) and Lemma \ref{xxlem2.5}.
\end{proof}

\section{Center and Veronese subrings of $q$-polynomial rings}
\label{xxsec3}

From now on we fix two integers $m,n\geq 2$ and 
a primitive $m$th root of unity, say $q$, in $\Bbbk$. The 
$q$-skew polynomial ring is generated by $x_1,\cdots,x_n$
and subject to the relations
\begin{equation}
\label{E3.0.1}\tag{E3.0.1}
x_j x_i= q x_i x_j, \quad \forall\, 1\leq i<j \leq n.
\end{equation}
and is denoted by $\Bbbk_{q}[x_1,\cdots,x_n]$,
or simply by $\Bbbk_{q}[\bfx]$.

We will adopt the following notation for monomials
${\bfx}^{\bfs} \assign x_{1}^{s_1} \cdots x_{n}^{s_n}$ where
${\bfs} = ( s_1 , ... , s_n ) \in {\mathbb N}^n$ is its degree vector. 
We will also denote by $\bfe_i$ the standard basis vector, 
with $1$ in its $i$th component and $0$ elsewhere.
For any $0 \le k \le m$, define
\begin{equation}
\label{E3.0.2}\tag{E3.0.2}
y_k \assign q^{- \lfloor n/2 \rfloor k(k+1)/2} \;
{\bfx}^{(k, m-k, k, m-k, ...)},
\end{equation}
in particular,
$$
\begin{array}{ccc}
y_0 &=& x_2^m x_4^m \cdots x_{2 \lfloor n/2 \rfloor}^m, \\
y_m &=& (-1)^{\lfloor n/2 \rfloor(m+1)} 
x_1^m x_3^m \cdots x_{2 \lceil n/2 \rceil - 1}^m.
\end{array}
$$
Note that both $y_0$ and $y_m$ are in the central subalgebra 
generated by $\{x_1^m, ..., x_n^m\}$.
One can easily check that the $y_i$s satisfy the following 
relations
\begin{equation}
\label{E3.0.3}\tag{E3.0.3}
y_i y_j = q^{- \lfloor n/2 \rfloor (i+j)(i+j+1)/2} 
\bfx^{(i+j, 2m-i-j, i+j, 2m-i-j, ...)}, \quad
\forall \; 0 \le i, j \le m.
\end{equation}
As a consequence, 
\begin{equation}
\label{E3.0.4}\tag{E3.0.4}
y_i y_j = y_k y_\ell, \quad \forall\; i + j = k + \ell.
\end{equation} 
Equations \eqref{E3.0.3}-\eqref{E3.0.4} also imply that
\begin{equation}
\label{E3.0.5}\tag{E3.0.5}
y_i y_j = 
\left\{
\begin{array}{lr}
y_0 y_{i+j} & i + j \leq  m, \\
y_m y_{i+j-m} & i + j > m.
\end{array}
\right.
\end{equation}

The following is a consequence of \cite[Lemma 4.1]{CYZ}.
Let $Z(A)$ denote the center of an algebra $A$.

\begin{lemma}
\label{xxlem3.1}
\begin{enumerate}
\item[(1)] If $n$ is even, then $Z(\Bbbk_{q} [\bfx])$ 
is a polynomial ring generated by $x_1^m, ..., x_n^m$. 
\item[(2)]
If $n$ is odd, then $Z(\Bbbk_{q} [\bfx])$ 
is generated by $x_1^m, ..., x_n^m, y_1, ..., y_{m-1}$. 
\end{enumerate}
\end{lemma}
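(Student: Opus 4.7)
The plan is to exploit the $\Z^n$-grading on $\Bbbk_q[\bfx]$ in which $\deg x_i = \bfe_i$. Conjugation by each generator preserves this grading, so the center is a graded subspace and hence is spanned by those monomials $\bfx^{\bfs}$ that are individually central. Everything reduces to solving a system of mod-$m$ congruences on the exponent vectors.

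First I would compute, using \eqref{E3.0.1} repeatedly, the commutation rule
\[
x_i\,\bfx^{\bfs} \;=\; q^{f_i(\bfs)}\,\bfx^{\bfs}\,x_i,
\qquad f_i(\bfs)\;:=\;\sum_{j<i} s_j \,-\, \sum_{j>i} s_j .
\]
Hence $\bfx^{\bfs}\in Z(\Bbbk_q[\bfx])$ iff $m\mid f_i(\bfs)$ for every $i$. Since $f_{i+1}(\bfs)-f_i(\bfs)=s_i+s_{i+1}$, this system is equivalent to the $n-1$ congruences
\[
s_i+s_{i+1}\equiv 0 \pmod m \qquad (1\le i\le n-1)
\]
together with the single condition $f_1(\bfs)\equiv 0\pmod m$. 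Setting $k=s_1$, the pairwise relations force $s_i\equiv (-1)^{i-1}k\pmod m$ for each $i$.

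Second, I would split on the parity of $n$ to handle $f_1$. For $n$ even, the sum $f_1(\bfs)=-(s_2+\cdots+s_n)$ has an odd number of terms and telescopes mod $m$ to $-k$; thus $k\equiv 0$, so $m\mid s_i$ for all $i$. The central monomials are therefore precisely products of $x_1^m,\dots,x_n^m$, and these $n$ elements are algebraically independent in $\Bbbk_q[\bfx]$ because their exponent vectors are independent in $\Z^n$. This proves (1). For $n$ odd, the sum in $f_1(\bfs)$ has an even number of alternating $\pm k$ terms and vanishes automatically, so \emph{every} tuple of residues of the shape $(k,m-k,k,m-k,\ldots,k)$ gives a central monomial. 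Writing $s_i=m\tilde q_i+r_i$ with $0\le r_i<m$, one factors
\[
\bfx^{\bfs} \;=\; (x_1^m)^{\tilde q_1}\cdots (x_n^m)^{\tilde q_n}\cdot \bfx^{(k,\,m-k,\,k,\,\ldots,\,k)},
\]
and the residue part equals $y_k$ up to the scalar recorded in \eqref{E3.0.2}. Since $y_0=x_2^mx_4^m\cdots x_{n-1}^m$ and $y_m=\pm x_1^mx_3^m\cdots x_n^m$ already lie in the subalgebra generated by the $x_i^m$, the center is generated by $\{x_1^m,\ldots,x_n^m,y_1,\ldots,y_{m-1}\}$, proving (2).

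The only non-trivial step is the parity count for $f_1$ in the even case; after that, everything is bookkeeping and the result follows. No claim about the defining relations among $y_1,\ldots,y_{m-1}$ is needed here, though \eqref{E3.0.3}--\eqref{E3.0.5} describe them if desired.
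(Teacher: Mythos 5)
Your argument is correct and is in substance the same as the paper's: the paper quotes \cite[Example 2.4(2)]{CPWZ2} for part (1) and \cite[Lemma 4.1]{CYZ} for part (2), the latter giving precisely your condition $m\mid f_i(\bfs)$ in the matrix form $S\mathbf{t}\in m\Z^n$, and then computes $\ker(\bar S)$ over $\Z/m\Z$ to obtain the same representatives $(k,m-k,k,\ldots,k)$ and $m\bfe_i$. Your version simply makes the commutation computation and the solution of the congruences self-contained instead of citing those references.
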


\begin{proof} (1) This is \cite[Example 2.4(2)]{CPWZ2}.

(2) One can check it directly or use \cite[Lemma 4.1]{CYZ}.
We use some of the notation in \cite[Section 4]{CYZ}. 
Let $Y$ be the skew symmetric $n\times n$-matrix with $1/m$ 
in all entries above the diagonal. 

Let $\mathbf{t} \in {\mathbb N}^n$. By \cite[Lemma 4.1]{CYZ} 
the monomial $\mathbf{x}^{\mathbf{t}}$ is in the center
$Z ( \Bbbk_{q} [ \mathbf{x} ] )$ if and only if $Y\mathbf{t} 
\in \Z^n$. Let $S=mY$. Then $\mathbf{x}^{\mathbf{t}}
\in Z ( \Bbbk_{q} [ \mathbf{x} ] )$ if and only if $S\mathbf{t} 
\in m\Z^n$. Let $\bar{S}$ be the endomorphism of 
$( \Z/m\Z )^n$ represented by the matrix $S$. Then 
$S \mathbf{t} \in m\Z^n$ if and only if $\mathbf{t}$ is a 
lift of an element in $\ker ( \bar{S} )$.
  
Since $n$ is odd, $\mathrm{rank}(\bar{S}\otimes\F_{p})
=n-1$ for all primes $p$. It is easy to check that 
$\ker ( \bar{S} )$ is generated by $(i,-i,i, \ldots ,-i,i ) 
\in ( \Z/m\Z )^{m}$ for $i=0, \ldots ,m$. Lifting these to 
$\Z^{n}$ gives $( i,m-i,i, \ldots ,m-i,i )$ for 
$i=1, \ldots ,m-1$ and $m \mathbf{e}_{i}$ for $i=1,\ldots,n$. 
\end{proof}

When $n$ is even, the center $Z(\Bbbk_q[\bfx])$ is easy to
understand, namely
$$Z(\Bbbk_q[\bfx])=\Bbbk[x^m_1, ... , x^m_n].$$ 
If $n$ is odd, every element of $Z(\Bbbk_q[\bfx])$ 
can be expressed as a linear combination of terms of the 
form ${\bfx}^{m \bfa}$ or ${\bfx}^{m {\bfa}} y_b$, 
with ${\bfa} \in \N^n$ and $0 < b < m$. 
Each such term can be rewritten as follows,
$${\bfx}^{m \bfa} y_b = x_1^{a_1m} \cdots x_n^{a_nm} y_b 
= q^{-\lfloor n/2 \rfloor b(b+1)/2} 
x_1^{a_1m + b} x_2^{(a_2 + 1)m - b} x_3^{a_3m+b} \cdots x_n^{a_nm+b}.$$
Since the above polynomials form a $\Bbbk$-linear basis
of $Z(\Bbbk_q[\bfx])$, we have
%
$$\begin{aligned}
Z(\Bbbk_q[{\bfx}]) &\cong \frac{\Bbbk[x_1^m, ..., x_n^m, y_0, ..., y_m]}
{\left(
\begin{array}{c}
y_0 - x_2^m x_4^m \cdots x_{n-1}^m, \\
y_m - (-1)^{(m+1)(n-1)/2} x_1^m x_3^m \cdots x_n^m, \\
y_i y_j-y_k y_{\ell}, \quad \forall\; i+j=k+\ell.
\end{array}
\right)} \\
&
\cong \frac{\Bbbk[x_1^m, ..., x_n^m, y_1, ..., y_{m-1}]}
{\left(
\begin{array}{c}
\forall i + j < m,\quad 
        y_i y_j - x_2^m x_4^m \cdots x_{n-1}^m y_{i+j}, \\
\forall i + j = m,\quad 
        y_i y_j - (-1)^{(m+1)(n-1)/2} x_1^m x_2^m \cdots x_n^m, \\
\forall i + j > m,\quad 
y_i y_j - (-1)^{(m+1)(n-1)/2} x_1^m x_3^m \cdots x_n^m y_{i+j-m}.
\end{array}
\right)}.
\end{aligned}
$$
For example, if $n = 3$,
$$Z(\Bbbk_q[{\bfx}]) \cong \frac{\Bbbk[x_1^m, x_2^m, x_3^m, y_0, ..., y_m]}{
\left(
\begin{array}{c}
y_0 - x_2^m, \quad y_m - (-1)^{m+1} x_1^m x_3^m, \\
y_i y_j - y_k y_{\ell}, \quad \forall i + j = k + \ell.
\end{array}
\right)},$$
and if $m = 2$,
$$Z(\Bbbk_q[{\bfx}]) \cong \frac{\Bbbk[x_1^2, ..., x_n^2, y_1]}{
\left(
\begin{array}{c}
y_1^2 - (-1)^{(n-1)/2} x_1^2 x_2^2 \cdots x_n^2.
\end{array}
\right)}.$$
Hopefully this gives some idea on what the center should be.

For any $v\in \N$, the $v$th Veronese subalgebra
of $\Bbbk_{q}[\bfx]$, denoted by  $\Bbbk_q [\bfx]^{(v)}$,
is the subalgebra generated by elements of total degree 
$v$. 

As before we fix positive integers $m,n,v$. 
Let 
\begin{equation}
\label{E3.1.1}\tag{E3.1.1}
g \assign \gcd (v,m).
\end{equation}

Let $\Bbbk_q[\bfx^{\pm 1}]$ be the localization of
$\Bbbk_q[\bfx]$ by inverting all $x_i$s. 
We extend the notation ${\bfx}^{\bf s}$ for all ${\bf s} \in \Z^n$
in a natural way.

Let $F$ be the center of $\Bbbk_q[{\bfx}^{\pm 1}]^{(v)}$
which is a localization of $Z := Z(\Bbbk_q[{\bfx}]^{(v)})$, 
and let $\Bbbk_q[{\bfx}]^{(v)}_F = \Bbbk_q[{\bfx}]^{(v)} \otimes_Z F$. 
Since $F$ is a $\Z^n$-graded field, we have
\begin{enumerate}
\item[(1)]
$\Bbbk_q[{\bfx}]^{(v)}_F=\Bbbk_q[{\bfx}^{\pm 1}]^{(v)}$ which is
a $\Z^n$-graded skew field. 
\item[(2)]
$\Bbbk_q[{\bfx}]^{(v)}_F$ is free over $F$.
\end{enumerate}
Since each $x_i^{mv} \in F$, we have that $\Bbbk_q[{\bfx}]^{(v)}_F$ is 
finite dimensional over $F$, and we denote 
\begin{equation}
\label{E3.1.2}\tag{E3.1.2}
w \assign \dim_F \Bbbk_q[{\bfx}]^{(v)}_F.
\end{equation}

Let $H_v = \{{\bf{s}} \in \Z^n \mid \sum_{i=1}^n s_i \in v \Z \}$, 
and let $H_v^{+} = H_v \cap \N^n$, so that $\Bbbk_q[{\bfx}]^{(v)}$ 
(respectively,  $\Bbbk_q[{\bfx}^{\pm 1}]^{(v)}$)
is the span of ${\bfx}^{H_v^{+}}$ (respectively,
${\bfx}^{H_v}$).

\begin{lemma}\label{xxlem3.2} 
Retain the above notation.
Suppose that $n$ is odd.
\begin{enumerate}
\item[(1)]
$$Z (\Bbbk_q [\bfx]^{(v)}) =
 Z(\Bbbk_q [\bfx]) \cap \Bbbk_q [\bfx]^{(v)} 
= \Bbbk \langle x_i^m, y_j \rangle \cap \Bbbk_q [\bfx]^{(v)}.$$
\item[(2)]
The center $Z(\Bbbk_q[\bfx^{\pm 1}]^{(v)})$ is spanned by
$\bfx^{M}$ where
$$M = \left( m\Z^n + g\Z \sum_{i = 1}^n (- 1)^{i - 1} {\bf e}_i \right) \cap H_v.$$
As a consequence,
$$Z (\Bbbk_q [\bfx]^{(v)}) 
= \Bbbk \langle x_i^m, y_{jg} \rangle \cap \Bbbk_q [\bfx]^{(v)}.$$
\end{enumerate}
\end{lemma}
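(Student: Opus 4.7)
The plan is to prove part (2) first and then deduce part (1) from it, since (2) provides an explicit monomial description of the center. For (2), I first reduce to monomials: $\Bbbk_q[\bfx^{\pm 1}]^{(v)}$ is $\Z^n$-graded and is a domain, so its center is the $\Bbbk$-span of those $\bfx^{\bfs}$ with $\bfs \in H_v$ that commute with every other such monomial. The commutation coming from \eqref{E3.0.1} reads
\[
\bfx^{\bfs}\bfx^{\bft} = q^{N(\bfs,\bft)} \bfx^{\bft}\bfx^{\bfs}, \qquad N(\bfs,\bft) := \sum_{i<j}(s_i t_j - s_j t_i),
\]
so centrality of $\bfx^{\bfs}$ amounts to $N(\bfs,\bft) \equiv 0 \pmod m$ for all $\bft \in H_v$.

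Testing this against the generating set $\{\bfe_{i+1}-\bfe_i : 1 \le i \le n-1\} \cup \{v\bfe_1\}$ of $H_v$ produces two families of congruences mod $m$: (a) $s_i + s_{i+1} \equiv 0 \pmod m$ for each $i$, which forces $s_i \equiv (-1)^{i-1} c \pmod m$ for a single integer $c$, i.e.\ $\bfs \in m\Z^n + c \sum_i(-1)^{i-1}\bfe_i$; and (b) $v(s_2 + \cdots + s_n) \equiv 0 \pmod m$. Because $n$ is odd, substituting (a) into (b) yields $v c \sum_{i=2}^n (-1)^{i-1}$, an alternating sum of $n-1$ (an even number of) terms that vanishes, so (b) is automatic. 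Finally, imposing $\bfs \in H_v$ and using $\sum_i(-1)^{i-1}=1$ for odd $n$, one gets $\sum s_i = m\sum u_i + c \in v\Z$, which is solvable for $\bfu \in \Z^n$ iff $c \in m\Z + v\Z = g\Z$. This is the asserted description of $M$.

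For the consequence in (2), every $\bfs \in M$, written as $m\bfu + jg\sum_i(-1)^{i-1}\bfe_i$, becomes after absorbing suitable $x_i^m$ factors to make all coordinates nonnegative a scalar multiple of a product of $x_i^m$'s and a single $y_{jg}$; conversely every generator $x_i^m$ and $y_{jg}$ has exponent vector of that form. Part (1) then falls out: the equality of the second and third expressions is Lemma~\ref{xxlem3.1}(2); the inclusion $Z(\Bbbk_q[\bfx]) \cap \Bbbk_q[\bfx]^{(v)} \subseteq Z(\Bbbk_q[\bfx]^{(v)})$ is immediate; and for the reverse, any $z \in Z(\Bbbk_q[\bfx]^{(v)})$ remains central after inverting the (central) $x_i^m$'s, so by (2) it is a linear combination of $\bfx^{\bfs}$ with $\bfs \in M \cap \N^n$. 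Since $M \subseteq m\Z^n + \Z \sum_i(-1)^{i-1}\bfe_i$, condition (a) is satisfied, and the same telescoping argument shows that for odd $n$ this is exactly what centrality of $\bfx^{\bfs}$ in $\Bbbk_q[\bfx]$ requires (cf.\ the proof of Lemma~\ref{xxlem3.1}).

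The only real subtlety is verifying that condition (b) is automatic for odd $n$, so that the refinement $c \in g\Z$ in $M$ arises entirely from the Veronese constraint $\bfs \in H_v$ rather than from the commutation relations. Everything else is bookkeeping on sublattice conditions in $\Z^n$.
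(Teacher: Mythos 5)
Your proof is correct and follows essentially the same route as the paper's: reduce to central monomials via the $\Z^n$-grading, extract the congruences $s_i+s_{i+1}\equiv 0\pmod m$ from commutation (the paper tests against $x_ix_{i+1}^{mv-1}$, you against the generators $\bfe_{i+1}-\bfe_i$ of $H_v$), and obtain $c\in g\Z$ from the total-degree constraint $\sum_i s_i\in v\Z$. The only difference is that you explicitly verify sufficiency --- the vanishing of the alternating sum coming from the extra generator $v\bfe_1$ when $n$ is odd --- where the paper leaves the converse as ``straightforward to check.''
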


\begin{proof}  
Let $\bfx^{\bf s}\in  Z (\Bbbk_q [\bfx^{\pm 1}]^{(v)})$ 
for some ${\bf s} \in \Z^n$. Since $x_i x_{i+1}^{vm-1}\in 
\Bbbk_q [\bfx]^{(v)}$,
$$\bfx^{\bf s} x_i x_{i+1}^{mv-1}
=x_i x_{i+1}^{mv-1} \bfx^{\bf s} 
= q^{-(s_i + s_{i+1})}  \bfx^{\bf s} x_i x_{i+1}^{mv-1}.$$
Hence, $s_i + s_{i+1} \in m \Z$ for all $i$. Then, 
for each $i$,
\begin{equation}
\label{E3.2.1}\tag{E3.2.1}
s_i = \begin{cases} a_i m +b & {\text{$i$ is odd,}}\\
a_im +(m-b) &{\text{$i$ is even}}.\end{cases}
\end{equation}
for some $a_1, ..., a_n \in \Z$ and $0\leq b\leq m-1$. 
This part of the proof works for both even and odd $n$.

(1) When $\bfx^{\bfs}\in Z (\Bbbk_q [\bfx]^{(v)})$ 
for ${\bf s} \in \N^n$. We obtain that, if $b>0$, then 
$a_i\geq 0$  for all $a_i$ in \eqref{E3.2.1} and if
$b=0$, $a_i\geq 0$ for odd $i$ and $a_i\geq -1$ for 
even $i$. This is equivalent to  
$$\bfx^{\bf s} =_{\Bbbk^\times} \begin{cases}
x_1^{ma_1}\cdots x_n^{ma_n} & b=0,\\
x_1^{ma_1} \cdots x_n^{ma_n} y_b & b\neq 0,
\end{cases}
$$
for some $a_i\geq 0$. The assertion follows.

(2) Recall that $n$ is odd.
Note that, if $x_1^{ma_1 } \cdots x_n^{ma_n} y_b 
\in Z(\Bbbk_q [\bfx^{\pm 1}]^{(v)})$, then 
$$b + m\left(\frac{n-1}{2} + \sum_{i=1}^n a_i\right) \in v \Z,$$ 
and hence, $b \in g \Z$. This means that if $\bfx^{\bfs}
\in Z$, then $\bfs\in M$. Conversely, it is straightforward
to check that if $\bfs\in M$, then $\bfx^{\bfs}\in Z$.
\end{proof}

We are interested the discriminant of $\Bbbk_q[{\bfx}]^{(v)}$
over its center. We examine separately the case when $n$ is 
odd, and the case when $n$ is even. 

We conclude this section with the hereditary property 
(as mentioned before Lemma \ref{xxlem2.5}) for 
monomials in $\Bbbk_{q}[\bfx]^{(v)}$.

\begin{lemma}
\label{xxlem3.3}
Let $A^1, \cdots, A^s$ be algebras of type 
$\Bbbk_{q}[\bfx]^{(v)}$. For each $i$, let
$X^i\subseteq A^i$ be a set of monomials. 
Then, for any $f^i\in X^i$, every divisor of
$f^1\otimes f^2\otimes \cdots \otimes f^s$ 
is of the form $g^1\otimes g^2\otimes \cdots
\otimes g^s$ where each $g^i$ is a divisor of $f^i$.
\end{lemma}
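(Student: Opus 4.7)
The plan is to work in a common ambient multi-parameter skew polynomial ring, where the monomial basis and the natural multi-grading reduce the problem to a standard ``lex-extreme terms'' argument, and then to pull the conclusion back to the tensor product of Veronese subalgebras.

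First, I would introduce the ambient algebra $B := \Bbbk_{q_1}[\bfx_1] \otimes \cdots \otimes \Bbbk_{q_s}[\bfx_s]$, one tensor factor per $A^i = \Bbbk_{q_i}[\bfx_i]^{(v_i)}$. This is a twisted polynomial algebra in $N := n_1 + \cdots + n_s$ generators $\{x_{i,j}\}$, with the $x_{i,j}$ quasi-commuting by $q_i$ within each block and commuting across blocks. It is a domain with $\Bbbk$-basis the monomials $\bfx^{\mathbf u}$, $\mathbf u \in \N^N$, each factoring uniquely as a pure tensor $\bfx_1^{\mathbf u_1} \otimes \cdots \otimes \bfx_s^{\mathbf u_s}$. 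The subalgebra $A^1 \otimes \cdots \otimes A^s \subseteq B$ inherits the basis consisting of those monomials whose $i$-th block $\mathbf u_i$ has coordinate sum in $v_i \Z$, and $f := f^1 \otimes \cdots \otimes f^s$ is a scalar multiple of a single such monomial in $B$.

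The key intermediate step is the graded-domain lemma: if $g, h \in B$ and $gh$ is a scalar multiple of a single monomial, then so are $g$ and $h$ individually. I would decompose $g$ and $h$ into their homogeneous pieces for the $\Z^N$-grading and impose a lex order on $\Z^N$; since $B$ is a domain, the lex-smallest and lex-largest graded pieces of $gh$ are exactly the products of the corresponding lex-extreme pieces of $g$ and $h$, and the resulting equality of these two extreme degrees forces each of $g$ and $h$ to have a single nonzero graded piece.

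Applying this to $f = gh$, where $g$ is the given normal divisor in $A^1 \otimes \cdots \otimes A^s$ and $h$ its cofactor (both lying in $B$), yields $g = c_g\, \bfx_1^{\mathbf u_1} \otimes \cdots \otimes \bfx_s^{\mathbf u_s}$ and analogously for $h$. The hypothesis $g \in A^1 \otimes \cdots \otimes A^s$, together with uniqueness of the monomial expansion in $B$, forces each $\bfx_i^{\mathbf u_i}$ to lie in $A^i$; absorbing the scalar $c_g$ into the first tensor factor gives $g = g^1 \otimes \cdots \otimes g^s$ with $g^i \in A^i$, and the same for $h$. Comparing block by block in $f^i = g^i h^i$ shows that $g^i$ divides $f^i$ in $A^i$, and normality of the monomial $g^i$ in $A^i$ is automatic because conjugation by $g^i$ scales each $x_{i,j}$ by a power of $q_i$ and so preserves $A^i$. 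The only genuinely non-bookkeeping step is establishing that $B$ is a domain with the claimed graded structure so that the lex-extreme argument applies; this is standard for iterated twisted tensor products of skew polynomial rings, and once it is in hand the rest of the proof reduces to tracking indices in the pure-tensor monomial basis.
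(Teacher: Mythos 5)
Your proposal is correct and follows essentially the same route as the paper: the paper views $A^1\otimes\cdots\otimes A^s$ directly as an $\N^n$-graded domain over the ordered semigroup $\N^n$ and concludes that any divisor of the homogeneous element $f^1\otimes\cdots\otimes f^s$ is homogeneous, hence a pure tensor of monomial divisors. Your embedding into the ambient tensor product of full skew polynomial rings and the explicit lex-extreme-terms argument are just a spelled-out version of that same homogeneity-of-divisors step.
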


\begin{proof} Consider $A^i=\Bbbk_{q_i}[\bfx]^{(v_i)}$
as an ${\mathbb N}^{n_i}$-graded algebra for all
$i$. Let $n=\sum_{i=1}^s n_i$. Then $A^1\otimes
\cdots \otimes A^s$ is an ${\mathbb N}^{n}$-graded algebra. 
Since each $f^i$ is ${\mathbb N}^{n_i}$-homogeneous, 
$F:=f^1\otimes  \cdots \otimes f^s$ 
is ${\mathbb N}^{n}$-homogeneous. 
Note that ${\mathbb N}^{n}$ is an ordered 
semigroup. Then any divisor $G$ of $F$ is 
${\mathbb N}^{n}$-homogeneous.
Equivalently, $G=g^1\otimes \cdots \otimes
g^s$ where each $g^i$ is a divisor of $f^i$.
\end{proof}

\section{Discriminant computation: when $n$ is odd}
\label{xxsec4}

We will freely use the notation introduced in the last 
section, and further assume that $n$ is odd in a large 
part of this section.

Recall from Lemma \ref{xxlem3.2} that, if $n$ is odd, then
\begin{equation}
\label{E4.0.1}\tag{E4.0.1}
M = \left( m\Z^n + g\Z(\sum_{i = 1}^n (- 1)^{i - 1} {\bf e}_i) \right) \cap H_v.
\end{equation}
Then $M$ is a subgroup of $H_v$. 
We can partition $H_v$ into cosets mod $M$. It is easy to see 
the total number of these cosets is equal to $w$ \eqref{E3.1.2}.

\begin{lemma}
\label{xxlem4.1}
Assume $n$ is odd.
\begin{enumerate}
\item[(1)] 
For each coset of $M$ in $H_v$, there 
is a unique representative ${\bf p}:= (p_1, ..., p_n)$ such that
\begin{enumerate}
\item[(a)] 
$0 \le p_1 < g$,
\item[(b)] 
for each $1 < i < n$, we have $0 \le p_i < m$, and
\item[(c)] 
$0 \le p_n < vm/g$.
\end{enumerate}
Moreover, the above remains true with indices $(1,n)$ replaced by any $(\mu,\nu)$ with $\mu\ne\nu$. 
\item[(2)]
$w = m^{n-1}$.
\item[(3)] 
$w\neq 0$ in $\Bbbk$.
\end{enumerate} 
\end{lemma}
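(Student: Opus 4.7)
The strategy is to prove (2) by a lattice-index computation, then bootstrap to (1) via uniqueness plus counting, and finally derive (3) from the primitivity of $q$. Write $g = \gcd(v, m)$, $m = g m'$, $v = g v'$ with $\gcd(m', v') = 1$, set $\mathbf u = \sum_{i=1}^n (-1)^{i-1} \mathbf e_i$, and let $\tilde M = m \Z^n + g \Z \mathbf u$, so that $M = \tilde M \cap H_v$. The image of $g\mathbf u$ in $(\Z/m\Z)^n$ has order $m' = m/g$ (its multiples $j g \mathbf u$ have nonzero entries mod $m$ exactly when $0 < j < m/g$), giving $[\Z^n : \tilde M] = g m^{n - 1}$. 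Because $n$ is odd, $\mathbf u$ has coordinate sum $1$, so the sum functional carries $\tilde M$ onto $m\Z + g\Z = g\Z$, whose image in $\Z/v\Z$ has order $v/g$; hence $[\tilde M : M] = v/g$. Using $[\Z^n : H_v] = v$ and multiplicativity of indices yields $w = [H_v : M] = m^{n - 1}$, proving (2).

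For uniqueness in (1), suppose $\mathbf p$ and $\mathbf p'$ are two representatives in the specified range with $\mathbf p - \mathbf p' = m \mathbf a + g k \mathbf u \in M$. The $\mu$th coordinate gives $p_\mu - p'_\mu = g(m' a_\mu + k (-1)^{\mu - 1})$, a multiple of $g$ of absolute value strictly less than $g$; this forces $p_\mu = p'_\mu$ and pins down $k = (-1)^\mu m' a_\mu$. Substituting back into each coordinate $i \ne \mu, \nu$ makes $p_i - p'_i$ a multiple of $m$ of absolute value less than $m$, hence zero. The $H_v$ condition then forces $p_\nu - p'_\nu$ to lie in $m\Z \cap v\Z$, and coprimality of $m'$ and $v'$ together with $|p_\nu - p'_\nu| < vm/g$ yields $p_\nu = p'_\nu$. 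Because this argument never uses $(\mu, \nu) = (1, n)$, the ``moreover'' clause follows. For existence, count integer points in $[0, g) \times [0, m)^{n - 2} \times [0, vm/g)$ (indexing the first factor by $\mu$, the last by $\nu$) that satisfy the $H_v$ condition: for every choice of the other coordinates, exactly $m/g$ of the $vm/g$ values of the $\nu$th coordinate produce sum in $v\Z$, yielding $g \cdot m^{n - 2} \cdot (m/g) = m^{n - 1} = w$ such points, matching $[H_v : M]$; uniqueness then upgrades to a bijection with the coset set.

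For (3), if the characteristic of $\Bbbk$ were a prime $p$ dividing $m$, then writing $m = p m_0$ and applying Frobenius in the domain $\Bbbk$ would give $(q^{m_0} - 1)^p = q^m - 1 = 0$, hence $q^{m_0} = 1$, contradicting $q$ being a primitive $m$-th root; thus $m$ is nonzero in $\Bbbk$ and so is $w = m^{n - 1}$. I expect the main obstacle to be the index computation in (2), where the interlocking conditions defining $M \subseteq \tilde M$ and $M \subseteq H_v$ must be disentangled and where the parity of $n$ enters (through $\mathbf u$ having coordinate sum $1$); once $w$ is pinned down, (1) reduces to a bounded-difference exercise leveraging $\gcd(m', v') = 1$, and (3) is immediate.
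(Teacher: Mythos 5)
Your proof is correct, but it inverts the paper's logic and replaces its constructive core with a structural one. The paper proves part (1) first by exhibiting explicit translation vectors in $M$ — namely $g\sum_i(-1)^{i-1}\mathbf{e}_i - cm\,\mathbf{e}_n$ (where $cm\equiv g \bmod v$), $m(\mathbf{e}_i-\mathbf{e}_n)$, and $(vm/g)\mathbf{e}_n$ — and reducing an arbitrary coset representative coordinate by coordinate into the box; it then obtains $w=m^{n-1}$ by counting the representatives so constructed. You instead compute $w=[H_v:M]$ abstractly via the tower $m\Z^n\subseteq\tilde M\subseteq\Z^n$ and the sum functional (this is where the parity of $n$ enters for you, through $\sigma(\mathbf{u})=1$, just as it enters the paper's proof through the membership of $g\mathbf{u}-cm\,\mathbf{e}_n$ in $H_v$), and then get existence of box representatives non-constructively by pigeonhole from the matching counts. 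Your uniqueness argument is essentially the same bounded-difference computation as the paper's, though you track the signs $(-1)^{\mu-1}$ explicitly, which makes the ``moreover'' clause cleaner than the paper's one-line appeal to symmetry of the calculation. The trade-off: the paper's route gives an algorithm for finding the canonical representative of a given coset (which is implicitly used later when the representatives $\mathbf{b}_i$ are chosen in $H_v^+$ and their minimal $x_1$-exponents are summed in Theorem 4.4), while yours is shorter on the index computation but yields existence only abstractly. Your Frobenius justification of part (3) is also a correct filling-in of a step the paper merely asserts. No gaps.
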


\begin{proof}
(1) Pick an arbitrary coset $M'$ of $M$, and let ${\bf p} = 
(p_1, ..., p_n) \in M'$. Since $g=\gcd(m,v)$, 
there exists $c \in \Z$ such that 
$cm \equiv g$ mod $v$. Hence  
$(g, -g, g, -g, ..., -g, g - cm) \in M$, and we can translate ${\bf p}$ by some multiple of 
this vector to obtain $0\le p_1<g$. 
Furthermore, if ${\bf t} \in M$ then 
$t_1 \in g \Z$, so there is no vector in $M'$ whose 
first component is any other $0 \le r' < g$. 

For each $1 < i < n$, we have $m ({\bf e}_i - {\bf e}_n) \in M$, 
so we can apply the translation trick above and assume that
$0 \le p_i < m$. Furthermore, if ${\bf t} \in M$ and 
$t_1 = 0$, then each other $t_i \in m\Z$. This implies
that there is no other set of possible values of 
$p_1, ..., p_{n-1}$ subject to the conditions 
$0 \le p_1 < g$ and $0 \le p_i < m$ for every $1<i<n$. 

Finally, $(vm/g) {\bf e}_n \in M$, so there exists a 
representative ${\bf p} \in M'$ subject to 
constraints (a)-(c) of the lemma. If $c {\bf e}_n \in M$, 
then $c \in m \Z \cap v \Z = (vm/g) \Z$, so this 
representative is unique. 

The last statement is clear since the above calculations 
do not depend on the ordering of the indices $1,...n$. 
This finishes the proof of part (1).

(2) The value $w$ can be determined by counting the cosets 
by their representatives. For every sequence of integers 
$p_1, ..., p_{n-1}$ such that $0 \le p_1 < g$ and 
$0 \le p_i < m$ for all $1<i<n$, there are $m/g$ 
possible values of $p_n$ such that $0 \le p_n < vm/g$ 
and $(p_1, ..., p_n) \in H_v$. Therefore, 
$w = g \cdot m^{n-2} \cdot m/g = m^{n-1}$. 

(3) Since $q\in \Bbbk$ and $o(q)=m$, the characteristic
of $\Bbbk$ cannot divide $m$. Or $m\neq 0$ and $w\neq 0$
in $\Bbbk$.
\end{proof}

In this paper we mainly consider the case when $B=\Bbbk_q[\bfx]^{(v)}$
for both even and odd $n$. 
Using \cite[Definition 1.10]{CPWZ1} and notation in Section 2, if 
$\mathcal{B} := \{{\bf b}_1, ..., {\bf b}_w\} \subseteq H_v^{+}$ is a 
set of representatives of each coset of $M$, then $\bfb:= \bfx^{\mathcal{B}}$ 
is a quasi-basis of $\Bbbk_q[\bfx]^{(v)}$ with respect to $T:=\bfx^{H_v^{+}}$, and
\begin{equation}
\label{E4.1.1}\tag{E4.1.1}
\mathcal{D}^c (T/\bfb) =_{\Bbbk^{\times}} d_w(\bfb,\bfb) 
\left\{ \prod_{i=1}^w \bfx^{{\bf s}_i - {\bf b}_i} \bfx^{{\bf s}'_i - {\bf b}_i} 
\, \middle| \, {\bf s}_i, {\bf s}'_i \in \N^n \cap ( M + {\bf b}_i ) \right\}.
\end{equation}

The following lemmas hold for both even and odd $n$.

\begin{lemma}
\label{xxlem4.2} 
Retain the above notation. Let $\bfs\in H_v^{+}$ such that
$\bfx^\bfs$ is not central. Then $\tr(\bfx^{\bfs})=0$.
As a consequence, the trace map $\tr$ sends $\Bbbk_{q}[\bfx]^{(v)}$
to $Z(\Bbbk_{q}[\bfx]^{(v)})$.
\end{lemma}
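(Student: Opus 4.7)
The plan is to compute $\tr(\bfx^{\bfs})$ explicitly by realizing left multiplication by $\bfx^\bfs$ as a monomial (generalized permutation) matrix in a natural $F$-basis of $B_F$, where $B = \Bbbk_q[\bfx]^{(v)}$ and $F = Z(\Bbbk_q[\bfx^{\pm 1}]^{(v)})$. Let $M \subseteq H_v$ be the subgroup consisting of those $\bft \in H_v$ for which $\bfx^\bft$ is central in $\Bbbk_q[\bfx^{\pm 1}]^{(v)}$; for odd $n$ this is Lemma \ref{xxlem3.2}(2), and for even $n$ the same calculation, based on $\bfx^\bfs \bfx^\bft = q^{f(\bfs,\bft)} \bfx^\bft \bfx^\bfs$ for a skew-bilinear form $f$, shows $M$ is a subgroup of $H_v$. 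Fix coset representatives $\mathcal{B} = \{\bfb_1, \ldots, \bfb_w\} \subseteq H_v^+$ for $H_v/M$; then $\{\bfx^{\bfb_i}\}_{i=1}^w$ is an $F$-basis of $B_F$.

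Using \eqref{E3.0.1}, $\bfx^\bfs \cdot \bfx^{\bfb_i} = q^{\alpha_i}\bfx^{\bfs+\bfb_i}$ for some $\alpha_i \in \Z$. Writing $\bfs + \bfb_i = \mathbf{u}_i + \bfb_{\pi(i)}$ with $\mathbf{u}_i \in M$ and $\pi(i) \in \{1,\ldots,w\}$ uniquely determined, this becomes $c_i \cdot \bfx^{\bfb_{\pi(i)}}$ with $c_i \in F^\times$. Since translation by $\bfs$ is a bijection of $H_v/M$, the map $\pi$ is a permutation of $\{1,\ldots,w\}$. Hence the matrix of left multiplication by $\bfx^\bfs$ in this basis is a monomial matrix, and
$$\tr(\bfx^\bfs) = \sum_{i\,:\,\pi(i)=i} c_i.$$

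The key observation is that $\pi(i) = i$ is equivalent to $\bfs \in M$, i.e., to $\bfx^\bfs$ being central. Thus, if $\bfx^\bfs$ is not central, $\pi$ is fixed-point free and $\tr(\bfx^\bfs) = 0$, proving the first claim. For the consequence, $B$ is $\Bbbk$-spanned by the monomials $\bfx^\bfs$ with $\bfs \in H_v^+$; for each such monomial the trace is either $0$ (noncentral case) or $w \cdot \bfx^\bfs$ (central case, since left multiplication by a central $\bfx^\bfs$ acts as the scalar $\bfx^\bfs \in F$ on every basis element). In both cases the trace lies in $Z(B)$, so $\tr(B) \subseteq Z(B)$ by $\Bbbk$-linearity. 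The main obstacle I anticipate is simply verifying that the centrality criterion ``$\bfx^\bfs$ central if and only if $\bfs \in M$'' holds uniformly in both parities of $n$ (Lemma \ref{xxlem3.2} being stated only for odd $n$); but this reduces to the skew-bilinearity of the commutation pairing and the fact that $\bfx^\bfs$ commutes with every basis monomial if and only if it commutes with every generator.
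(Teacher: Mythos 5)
Your proposal is correct and follows essentially the same route as the paper: both arguments compute the matrix of left multiplication by $\bfx^{\bfs}$ in the monomial $F$-basis $\bfx^{\mathcal B}$, observe it is a generalized permutation matrix whose underlying permutation (translation by $\bfs$ on $H_v/M$) is fixed-point free exactly when $\bfx^{\bfs}$ is noncentral, and conclude the trace vanishes. Your version just spells out more explicitly the identification of $M$ with the degree vectors of central monomials and the value $w\,\bfx^{\bfs}$ in the central case.
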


\begin{proof} Since $A:=\Bbbk_q[\bfx]^{(v)}$ is ${\mathbb Z}^n$-graded, so is
the center $Z:=Z(A)$. Let $F$ be the graded field of fractions 
of $Z$. Then $A$ is a free module over $F$ with $F$-basis $\mathcal{B}$.
Then, for all $i,j$, there is a unique $k$ such that $\bfb_i \bfb_j=c_{ij}^k \bfb_k$ 
for some $0\neq c_{ij}^k\in F$. If $\bfb_i$ is not in the center, 
then $j\neq k$. Therefore $\tr(\bfb_i)=\sum_{j=k} c_{ij}^k=0$. Every 
element $\bfx^{\bfs}$ is of the form $c \bfb_i$ for some $i$ and $c\in F$.
The assertion follows.
\end{proof}

\begin{lemma}
\label{xxlem4.3}
Retain the above notation. Suppose 
that $w$ is invertible. Then
$$\mathcal{D}^c(T/\bfb) =_{\Bbbk^\times} 
\left\{ \left( \prod_{i=1}^w \bfx^{ {\bf s}_i} \right)
\left( \prod_{i=1}^w \bfx^{ {\bf s}'_i} \right)
\, \middle| \, {\bf s}_i, {\bf s}'_i\in \N^n \cap ( M + {\bf b}_i )\right\}.
$$
\end{lemma}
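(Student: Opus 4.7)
The plan is to compute $d_w(\bfb,\bfb)$ explicitly and thereby absorb the $-\bfb_i$ shifts appearing in \eqref{E4.1.1}. First, for any $i,j$ the product $\bfb_i\bfb_j$ is a scalar in $\Bbbk^\times$ (a power of $q$) times the monomial $\bfx^{\bfb_i+\bfb_j}$, so by Lemma \ref{xxlem4.2} its regular trace vanishes unless $\bfx^{\bfb_i+\bfb_j}$ is central, equivalently unless $\bfb_i+\bfb_j\in M$. Since $\{\bfb_j\}_{j=1}^w$ is a full set of coset representatives of $H_v/M$, the condition $\bfb_j\equiv -\bfb_i\pmod M$ determines a unique index $\sigma(i)$, and $\sigma$ is an involution on $\{1,\dots,w\}$ (possibly with fixed points).

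Hence the matrix $(\tr(\bfb_i\bfb_j))_{i,j}$ has exactly one nonzero entry in each row and column, located at $(i,\sigma(i))$, so its determinant equals $\pm\prod_i\tr(\bfb_i\bfb_{\sigma(i)})$. For each $i$, the element $\bfx^{\bfb_i+\bfb_{\sigma(i)}}$ is central, and since $\Bbbk_q[\bfx]^{(v)}_F$ is free of rank $w$ over $F$, multiplication by it acts as a scalar and has trace equal to $w$ times this central monomial. Combining, using that $\sigma$ is an involution and that $w$ is invertible in $\Bbbk$, one obtains
$$d_w(\bfb,\bfb) =_{\Bbbk^\times}\bfx^{2\sum_{i=1}^w \bfb_i}.$$
(Note as a sanity check that $2\sum\bfb_i=\sum_i(\bfb_i+\bfb_{\sigma(i)})\in M$, so this monomial is genuinely central.)

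Substituting into \eqref{E4.1.1} completes the argument: in the localization $\Bbbk_q[\bfx^{\pm 1}]^{(v)}$ the product $\prod_i\bfx^{\bfs_i-\bfb_i}\bfx^{\bfs'_i-\bfb_i}$ equals a power of $q$ times $\bfx^{\sum_i(\bfs_i+\bfs'_i)-2\sum_i\bfb_i}$, so multiplication by $d_w(\bfb,\bfb)$ cancels the $-2\sum_i\bfb_i$ shift and produces the same monomial, up to a factor in $\Bbbk^\times$, as $\prod_i\bfx^{\bfs_i}\prod_i\bfx^{\bfs'_i}$. Since both expressions lie in $\Bbbk_q[\bfx]^{(v)}$ itself and agree up to a unit in the localization, they are equal up to $\Bbbk^\times$ in $\Bbbk_q[\bfx]^{(v)}$. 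The main obstacle is nothing conceptual but only the bookkeeping of the various powers of $q$ coming from reordering monomials; all of them lie in $\Bbbk^\times$, and the invertibility hypothesis on $w$ is used precisely to ensure that the factor $w^w$ arising in the diagonal product does not vanish when extracting $d_w(\bfb,\bfb)$ as a nonzero scalar multiple of $\bfx^{2\sum\bfb_i}$.
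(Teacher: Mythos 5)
Your argument is correct and follows essentially the same route as the paper: you show the trace pairing matrix $(\tr(\bfx^{\bfb_i}\bfx^{\bfb_j}))$ has a single nonzero entry per row located at the pairing $\bfb_j=\bfb_i^*$ with $\bfb_i+\bfb_i^*\in M$, deduce $d_w(\bfb,\bfb)=_{\Bbbk^\times}\bigl(\prod_i\bfx^{\bfb_i}\bigr)^2$ using invertibility of $w$, and then substitute into \eqref{E4.1.1}. The only difference is that you spell out the final cancellation of the $-2\sum_i\bfb_i$ shift, which the paper leaves implicit.
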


\begin{proof}
For each ${\bf b}_i \in \mathcal{B}$, let ${\bf b}_i^* \in \mathcal{B}$ 
be such that ${\bf b}_i + {\bf b}_i^* \in M$. For any ${\bf s} \in H_v^{+}$, 
if ${\bf s} \notin M$, then $\bfx^{\bf s}$ is not central, and 
$\tr(\bfx^{\bf s}) = 0$ by Lemma \ref{xxlem4.2}.
If ${\bf s} \in M$, then $\bfx^{\bf s}$ is central, and $\tr(\bfx^{\bf s}) 
= w\bfx^{\bf s}=_{\Bbbk^{\times}} \bfx^{\bf s}$, where the
last equation follows from the hypothesis that $w$ is invertible. 
Therefore, in the matrix $(\tr(\bfx^{{\bf b}_i} \bfx^{{\bf b}_j}))_{w \times w}$, 
the only nonzero terms appear where ${\bf b}_j = {\bf b}_i^*$, and
$$d_w (\bfb,\bfb) 
=_{\Bbbk^\times} \det (\tr(\bfx^{{\bf b}_i} \bfx^{{\bf b}_j}))_{w \times w} 
=_{\Bbbk^\times} \prod_{i=1}^w \bfx^{{\bf b}_i} \bfx^{{\bf b}_i^*} 
=_{\Bbbk^\times} \left( \prod_{i=1}^w \bfx^{{\bf b}_i} \right)^2.$$
The assertion follows by the above formula and equation \eqref{E4.1.1}.
\end{proof}

Recall that $m$ is the order of $q$, the rank of $\Bbbk_q[\bfx]^{(v)}$ 
over its center is $w=m^{n-1}$ and $g=\gcd(v,m)$. 

\begin{theorem}
\label{xxthm4.4}
Let $B=\Bbbk_q[\bfx]^{(v)}$ when $n$ is odd. Suppose that 
$m$ is invertible in $\Bbbk$. Let $R$ be the
center of $B$. Assume that $v$ divides $wp(g-1)$. Then
$$d_w^{[p]}(B/R) =_{\Bbbk^{\times}} (x_1 x_2 \cdots x_n)^{wp(g-1)}
=_{\Bbbk^{\times}} (x_1^v x_2^v \cdots x_n^v)^{\frac{wp(g-1)}{v}}.$$
As a consequence, $d_w^{[p]}(B/R)$ is stable.
\end{theorem}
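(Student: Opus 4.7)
My plan is to reduce the computation to a $\gcd$ of monomials. By Lemma \ref{xxlem2.4}(4), $d^{[p]}_w(B/R)$ equals the $\gcd$ in $A$ of $(\mathcal{D}^c(T/\bfb))^p$. Since $m$ is invertible in $\Bbbk$ and $w = m^{n-1}$, $w$ too is invertible, so Lemma \ref{xxlem4.3} applies. Hence every element of $(\mathcal{D}^c(T/\bfb))^p$ is, up to a scalar in $\Bbbk^\times$, a product of the form
\[
\prod_{k=1}^{p}\prod_{i=1}^{w} \bfx^{\bfs_i^{(k)}}\,\bfx^{\bfs_i^{\prime(k)}},
\qquad
\bfs_i^{(k)},\bfs_i^{\prime(k)}\in\N^n\cap(M+\bfb_i),
\]
with the inner exponents chosen independently for each $(k,i)$.

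Monomials in $\Bbbk_q[\bfx]$ are normal and divisibility among them is componentwise, so the $\gcd$ of a set of monomials is the monomial whose $j$th exponent is the minimum of the $j$th exponents over the set. Thus the $j$th exponent of $d^{[p]}_w(B/R)$ equals $2p\sum_{i=1}^{w}\delta_{i,j}$, where $\delta_{i,j} := \min\{s_j : \bfs\in\N^n\cap(M+\bfb_i)\}$. The central claim is then that $\sum_i \delta_{i,j} = (g-1)w/2$, independent of $j$. To prove it I would apply Lemma \ref{xxlem4.1}(1) with the indices $(1,n)$ replaced by $(j,\ell)$ for some $\ell\neq j$, producing within each coset $M+\bfb_i$ a unique representative $\tilde\bfb_i$ with $0\le \tilde b_{i,j} < g$. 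Since $g\mid m$, every coordinate of every element of $M$ lies in $g\Z$, so $\delta_{i,j} = \tilde b_{i,j}$. A count identical to that in the proof of Lemma \ref{xxlem4.1}(2) shows that for each $a\in\{0,\ldots,g-1\}$ there are exactly $m^{n-2}\cdot(m/g)$ representatives with $j$th coordinate $a$, and summing yields
\[
\sum_{i=1}^{w}\delta_{i,j}
= \sum_{a=0}^{g-1} a\cdot m^{n-2}\cdot\frac{m}{g}
= \frac{(g-1)\,m^{n-1}}{2}
= \frac{(g-1)w}{2}.
\]
Combining, the $j$th exponent of $d^{[p]}_w(B/R)$ is $pw(g-1)$ for every $j$, giving $d^{[p]}_w(B/R)=_{\Bbbk^\times}(x_1\cdots x_n)^{pw(g-1)}$.

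The hypothesis $v\mid wp(g-1)$ is precisely what forces the total degree $nwp(g-1)$ to be divisible by $v$, i.e.\ guarantees that the candidate monomial lies in $A=\Bbbk_q[\bfx]^{(v)}$; once that holds, the $\gcd$ argument carries over from $B$ to $A$, since any monomial common divisor in $A$ is again constrained to have all coordinates $\le pw(g-1)$ and total degree in $v\Z$. The second form $(x_1^v\cdots x_n^v)^{pw(g-1)/v}$ is then just a rewrite using $v\mid pw(g-1)$. Stability is immediate from the explicit formula: $d^{[ip]}_w(B/R)=_{\Bbbk^\times}(x_1\cdots x_n)^{ipw(g-1)}=_{\Bbbk^\times}\bigl(d^{[p]}_w(B/R)\bigr)^{i}$.

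The step I expect to require the most care is verifying that $\delta_{i,j}=\tilde b_{i,j}$ --- that the representative produced by Lemma \ref{xxlem4.1}(1) genuinely realizes the minimum $j$th coordinate across its coset. This relies on the observation that $M\subseteq m\Z^n + g\Z\bigl(\sum_{i}(-1)^{i-1}\bfe_i\bigr)$ has all individual coordinate entries in $g\Z$, a use of $g=\gcd(m,v)$ beyond what Lemma \ref{xxlem4.1} itself provides.
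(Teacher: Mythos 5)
Your proposal is correct and follows essentially the same route as the paper: reduce via Lemma \ref{xxlem2.4}(4) and Lemma \ref{xxlem4.3} to a gcd of products of monomials, compute each coordinate of that gcd as a sum over cosets of minimal exponents read off from the canonical representatives of Lemma \ref{xxlem4.1}(1), and use the relabeling statement there to get $f_1=\cdots=f_n=w(g-1)/2$. Your added justification that the representative genuinely realizes the minimum (via $M\subseteq g\Z^n$) is a point the paper leaves implicit, but it is the same argument.
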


\begin{proof} By Lemmas \ref{xxlem2.4}(4)
and \ref{xxlem4.3}, we have $d_w^{[p]}(B/R) =\gcd \Lambda^{2p}$
where 
$$\Lambda:=\left\{ \prod_{i=1}^w \bfx^{ {\bf s}_i} 
\, \middle| \, {\bf s}_i\in \N^n \cap ( M + {\bf b}_i )\right\}.
$$
For each $1 \le s \le n$, let $f_s \in \N$ be maximal such 
that $x_s^{f_s}$ divides all elements of $\Lambda$. 
This gives $\bfx^{2p{\bf f}}$ as the gcd of $\Lambda^{2p}$ 
in the over-algebra $\Bbbk_q[\bfx]\supseteq B$ where ${\bf f}=
(f_1,\cdots,f_n)$. If $2p {\bf f} \in H_v^{+}$, then it is the gcd 
of $\Lambda^{2p}$ in $\Bbbk_q[\bfx]^{(v)}$ as well, 
but, otherwise, this is not true. 

We first calculate $f_1$ by summing the lowest 
powers of $x_1$ in each coset of $M$ (or more precisely, 
in each $\N^n \cap ( M + {\bf b}_i )$ for different 
$i$). These lowest powers can be found by using the 
representatives outlined in Lemma \ref{xxlem4.1}(1),
which also shows that this power cannot 
exceed $g - 1$. For each $0 \le k \le g-1$, 
there are $m^{n-1}/g$ cosets with 
lowest power $x_1^k$. Therefore, the sum is 
$$f_1 = \frac{m^{n-1}}{g} \; \frac{g(g-1)}{2} = \frac{w(g-1)}{2}.$$ 
For $f_i$ with $i\ne 1$, we can use the last assertion of
Lemma \ref{xxlem4.1}(1) to relabel indices, so the above calculation
remains valid for $i\ne 1$ and we conclude that $f_1 = f_2 = \cdots = f_n$.

Now $2p{\bf f}=wp(g-1) (1,1,\cdots,1)\in H_v^{+}$ as $v$ divides $wp(g-1)$. 
The assertion follows from the last paragraph, and stability of
$d_w^{[p]}(B/R)$ follows from the main assertion.
\end{proof}

\section{Discriminant computation: when $n$ is even}
\label{xxsec5}

In this section we assume that $n$ is even. The following
is similar to Lemma \ref{xxlem3.2}.

\begin{lemma}
\label{xxlem5.1} 
Suppose that $n$ is even.
\begin{enumerate}
\item[(1)]
$$Z(\Bbbk_q [\bfx]^{(v)}) = 
\Bbbk \langle x_i^m, y_{jm/g} \rangle \cap \Bbbk_q [\bfx]^{(v)}.$$
\item[(2)]
The center $Z(\Bbbk_q [\bfx^{\pm 1}]^{(v)})$ is spanned by $\bfx^M$ 
where
$$M:=\left( m\Z^n + \left(\frac{m}{g}\right)\Z \sum_{i = 1}^n (- 1)^{i - 1} {\bf e}_i \right) \cap H_v.$$
\end{enumerate}
\end{lemma}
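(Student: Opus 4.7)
The proof mirrors Lemma \ref{xxlem3.2} with the crucial observation that, for even $n$, the alternating vector $\bfw := \sum_{i=1}^n (-1)^{i-1}\bfe_i$ has total degree $0$ rather than $1$, so the ``minimal'' central elements along the $\bfw$-direction arise from a different threshold. My plan is to establish part (2) first by extracting constraints on $\bfs$, and then deduce part (1) by passing to monomial generators.

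For part (2), necessity begins by reusing the parity-independent step of Lemma \ref{xxlem3.2}(2): commuting $\bfx^\bfs$ with $x_i x_{i+1}^{mv-1} \in \Bbbk_q[\bfx]^{(v)}$ yields $s_i + s_{i+1} \equiv 0 \pmod m$ for each $i$, hence the alternation $s_i \equiv (-1)^{i-1}s_1 \pmod m$. Next, commuting with $x_1^v \in \Bbbk_q[\bfx]^{(v)}$ produces the scalar $q^{v(|\bfs| - s_1)}$, giving $|\bfs| \equiv s_1 \pmod{m/g}$; for $n$ even the alternation forces $|\bfs| \equiv 0 \pmod m$, so this collapses to $s_1 \in (m/g)\Z$. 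Combined with $\bfs \in H_v$ this places $\bfs$ in $M$. For sufficiency, write $\bfs = (m/g)k\bfw + m\bfa \in M$ and consider the commutation scalar $\omega(\bfs, \bft) := \sum_{i<j}(s_j t_i - t_j s_i)$, which satisfies $\bfx^\bfs \bfx^\bft = q^{\omega(\bfs,\bft)} \bfx^\bft \bfx^\bfs$. The $m\bfa$ contribution is automatically in $m\Z$; a short coefficient-by-coefficient check shows $\omega(\bfw, \bft) = -|\bft|$ for even $n$, and since $|\bft| \in v\Z \subseteq g\Z$ one has $(m/g) k\, \omega(\bfw,\bft) \in m\Z$. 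Hence $\omega(\bfs, \bft) \equiv 0 \pmod m$ for all $\bft \in H_v$, so $\bfx^\bfs$ is central.

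For part (1), I use the identity $Z(\Bbbk_q[\bfx]^{(v)}) = Z(\Bbbk_q[\bfx^{\pm 1}]^{(v)}) \cap \Bbbk_q[\bfx]^{(v)}$, valid because both sides are $\Z^n$-graded spans of monomials. By part (2) this equals the $\Bbbk$-span of $\bfx^\bfs$ with $\bfs \in M \cap \N^n$. Given such $\bfs$, reduce $s_1$ modulo $m$ to $b = jm/g$ with $0 \le j < g$; then $\bfs$ agrees with the exponent of $y_b$ up to a vector in $m\Z^n$, so $\bfx^\bfs$ is a scalar multiple of $y_b$ times an ordered product of $x_i^m$'s (absorbing $y_0 = x_2^m x_4^m \cdots x_n^m$ or $y_m$ as needed to keep all exponents non-negative). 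This shows $\bfx^\bfs \in \Bbbk\langle x_i^m, y_{jm/g} \rangle$, and the reverse inclusion is immediate from the centrality already established in part (2).

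The main technical step is the coefficient computation $\omega(\bfw, \bft) = -|\bft|$ for even $n$. This parity-sensitive identity (whose odd-$n$ analogue is $\omega(\bfw, \bft) = 0$ identically) is precisely what forces the central generator to shift from $y_g$ in the odd case to $y_{m/g}$ in the even case; the only other bookkeeping subtlety is in ensuring that the absorption into powers of $x_i^m$ in part (1) keeps all exponents non-negative, which is handled by the division-with-remainder of $s_1$ by $m$.
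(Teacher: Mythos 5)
Your proof is correct and follows essentially the same route as the paper: both arguments extract $s_i+s_{i+1}\in m\Z$ by commuting with $x_ix_{i+1}^{mv-1}$ and then pin down the residue $b$ to multiples of $m/g$ by commuting with $x_1^v$ (the paper does this via $x_1^vy_b=q^{vb}y_bx_1^v$, you via the scalar $q^{v(|\bfs|-s_1)}$, which is the same computation). The only differences are cosmetic — you prove (2) before (1) and spell out the converse (the identity $\omega(\bfw,\bft)=-|\bft|$ for even $n$) that the paper dismisses as ``straightforward to check.''
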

\begin{proof}
(1) We copy the first part of the proof of 
Lemma \ref{xxlem3.2}.

Let $\bfx^{\bf s}\in  Z (\Bbbk_q [\bfx^{\pm 1}]^{(v)})$ 
for some ${\bf s} \in \Z^n$. Since $x_i x_{i+1}^{vm-1}\in 
\Bbbk_q [\bfx]^{(v)}$, we have
$$\bfx^{\bf s} x_i x_{i+1}^{mv-1}
=x_i x_{i+1}^{mv-1} \bfx^{\bf s} 
= q^{-(s_i + s_{i+1})}  \bfx^{\bf s} x_i x_{i+1}^{mv-1}.$$
Hence, $s_i + s_{i+1} \in m \Z$ for all $i$. Then, 
for each $i$,
\begin{equation}
\label{E5.1.1}\tag{E5.1.1}
s_i = \begin{cases} a_i m +b & {\text{$i$ is odd,}}\\
a_im +(m-b) &{\text{$i$ is even}}.\end{cases}
\end{equation}
for some $a_1, ..., a_n \in \Z$ and $0\leq b\leq m-1$. 
Considering $\bfx^{\bfs}\in Z (\Bbbk_q [\bfx]^{(v)})$ 
for ${\bf s} \in \N^n$. We obtain that, if $b>0$, then 
$a_i\geq 0$  for all $a_i$ in \eqref{E5.1.1} and if
$b=0$, $a_i\geq 0$ for odd $i$ and $a_i\geq -1$ for 
even $i$. This is equivalent to 
$$\bfx^{\bf s} =_{\Bbbk^\times} \begin{cases}
x_1^{ma_1}\cdots x_n^{ma_n} & b=0,\\
x_1^{ma_1} \cdots x_n^{ma_n} y_b & b\neq 0,
\end{cases}
$$
for some $a_i\geq 0$. Next we need to determine the values of
$b$ such that $y_b\in Z (\Bbbk_q [\bfx]^{(v)})$.
Note that $x_1^v y_b= q^{vb} y_b x_1^v$. Hence
$vb\in m\Z$, or equivalently, $b$ is a multiple of
$m/g$. The assertion follows.

(2) By the proof of part (1), every monomial 
$\bfx^{\bfs}\in Z(\Bbbk_q [\bfx^{\pm 1}]^{(v)})$ is 
in $\bfx^M$. Conversely, it is straightforward to
check that every element in $\bfx^M$ is also in 
$Z(\Bbbk_q [\bfx^{\pm 1}]^{(v)})$
\end{proof}

Much of the work of last section can be reapplied. When 
$n$ is even we define $M$ as in Lemma \ref{xxlem5.1}(2):

\begin{equation}
\label{E5.1.2}\tag{E5.1.2}
M=\left( m\Z^n + \left(\frac{m}{g}\right)\Z 
(\sum_{i = 1}^n (- 1)^{i - 1} {\bf e}_i) \right) \cap H_v.
\end{equation}

\begin{lemma} 
\label{xxlem5.2} Suppose that $n$ is even.
\begin{enumerate}
\item[(1)]
For each coset of $M$ in $H_v$, there is a unique 
representative ${\bf p}:=(p_1,\cdots,p_n)$ such that
\begin{enumerate}
\item[(a)] 
$0 \le p_1 < m/g$, 
\item[(b)] 
for each $1 < i < n$, $0 \le p_i < m$, and
\item[(c)] 
$0 \le p_n < vm/g$.
\end{enumerate}
\item[(2)]
$w = m^n/g^2$.
\item[(3)] 
$w\neq 0$ in $\Bbbk$.
\end{enumerate}
\end{lemma}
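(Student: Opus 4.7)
The plan is to follow the template of Lemma \ref{xxlem4.1}, adjusting for the fact that $M$ is now defined by \eqref{E5.1.2}. The key structural difference is that for even $n$, the alternating vector $\mathbf{v} := \sum_{i=1}^n (-1)^{i-1}\bfe_i$ has component sum $0$, so $(m/g)\mathbf{v}$ already lies in $H_v$ and hence in $M$, without the extra factor of $g$ that appeared in the odd case. This explains the asymmetry between condition (a) here and Lemma \ref{xxlem4.1}(1)(a): the first coordinate is reduced modulo $m/g$ rather than modulo $g$.

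For existence in (1), I would exhibit three translations lying in $M$ and apply them in order. The vector $(m/g)\mathbf{v}$ reduces $p_1$ modulo $m/g$; for each $1 < i < n$, the vector $m(\bfe_i - \bfe_n) \in M$ (its components sum to $0 \in v\Z$) reduces $p_i$ modulo $m$ without disturbing $p_1$; and finally $(vm/g)\bfe_n = (v/g)\cdot m\bfe_n \in M$ reduces $p_n$ modulo $vm/g$. These translations place $\bfp$ in the asserted box.

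For uniqueness, I would suppose $\bfp$ and $\bfp'$ are two box representatives with $\bfp - \bfp' = m\bfa + (m/g)\lambda \mathbf{v} \in M$. Projecting to the first coordinate and using $|p_1 - p'_1| < m/g$ forces $p_1 = p'_1$ and $\lambda = -g a_1$. Substituting into coordinates $1 < i < n$ and using $|p_i - p'_i| < m$ forces $p_i = p'_i$ and pins down $a_i = (-1)^{i+1} a_1$. Summing all coordinate differences and using the identity $\sum_{i=2}^{n-1}(-1)^{i+1} = 0$ (special to even $n$) yields $\sum(p_i - p'_i) = m(a_1 + a_n) \in v\Z$; coprimality of $v/g$ and $m/g$ then forces $a_1 + a_n \in (v/g)\Z$, so $p_n - p'_n \in (mv/g)\Z$, and the bound on $p_n$ gives $p_n = p'_n$.

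Part (2) follows by the same coset-counting as in Lemma \ref{xxlem4.1}(2): fixing admissible $(p_1,\ldots,p_{n-1})$, the number of $p_n \in [0, vm/g)$ with $\sum_i p_i \in v\Z$ is $(vm/g)/v = m/g$, so $w = (m/g)\cdot m^{n-2}\cdot (m/g) = m^n/g^2$. For (3), the hypothesis that $q$ has order $m$ in $\Bbbk^{\times}$ forces $\mathrm{char}\,\Bbbk \nmid m$, so $w$ is nonzero in $\Bbbk$. The main obstacle is the last step of uniqueness: one must combine the per-coordinate congruences with the $H_v$ constraint to pin down $a_1 + a_n$, and the vanishing $\sum_{i=2}^{n-1}(-1)^{i+1} = 0$, which is precisely the even-$n$ phenomenon, is what makes the conclusion clean.
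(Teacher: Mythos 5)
Your proposal is correct and follows essentially the same route as the paper: reduce a coset representative into the box using the three families of lattice vectors $(m/g)\sum_i(-1)^{i-1}\bfe_i$, $m(\bfe_i-\bfe_n)$, and $(vm/g)\bfe_n$, prove uniqueness by showing any element of $M$ supported in the box must vanish, and count $m/g$ admissible values of $p_n$ per choice of $(p_1,\ldots,p_{n-1})$. Your uniqueness step is just a more explicit, coefficient-level version of the paper's terse claims about first components and supports of elements of $M$, and the characteristic argument for (3) is identical.
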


\begin{proof} The following proof is similar to the
proof of Lemma \ref{xxlem4.1}.

(1) Pick an arbitrary coset $M'$ of $M$, and let ${\bf p} = 
(p_1, ..., p_n) \in M'$. Since 
$$(m/g, -m/g, m/g, -m/g, ..., m/g, -m/g) \in M,$$
we can replace $p_1$ by $r$ where 
$0\le r\le g$ and $r \equiv p_1$ mod $m/g$ within the coset
$M'$. Therefore we can assume, without loss of generality, 
that $0 \le p_1 < m/g$. Furthermore, if ${\bf t} \in M$ then 
$t_1 \in (m/g) \Z$, so there is no vector in $M'$ whose 
first component is any other $0 \le r' < m/g$. 

For each $1 < i < n$, $m ({\bf e}_i - {\bf e}_n) \in M$, 
so we can assume, without loss of generality, that 
$0 \le p_i < m$. Furthermore, if ${\bf t} \in M$ and 
$t_1 = 0$, then each other $t_i \in m\Z$. This implies
that there is no other set of possible values of 
$p_1, ..., p_{n-1}$ subject to the conditions 
$0 \le p_1 < m/g$ and $0 \le p_i < m$ for every $1<i<n$. 

Finally, $(vm/g) {\bf e}_n \in M$, so there exists a 
representative ${\bf p} \in M'$ subject to 
constraints (a)-(c) of the lemma. If $c {\bf e}_n \in M$, 
then $c \in m \Z \cap v \Z = (vm/g) \Z$, so this 
representative is unique. This finishes the proof of
part (1).

(2) The value $w$ can be determined by counting the cosets 
by their representatives. For every sequence of integers 
$p_1, ..., p_{n-1}$ such that $0 \le p_1 < m/g$ and 
$0 \le p_i < m$ for all $1<i<n$, there are $m/g$ 
possible values of $p_n$ such that $0 \le p_n < vm/g$ 
and $(p_1, ..., p_n) \in H_v$. Therefore, 
$w = (m/g) \cdot m^{n-2} \cdot (m/g) = m^{n}/g^2$. 

(3) Since $q\in \Bbbk$ and $o(q)=m$, then the characteristic
of $\Bbbk$ cannot divide $m$. Consequently $m\neq 0$ and $w\neq 0$
in $\Bbbk$.
\end{proof}

\begin{theorem}
\label{xxthm5.3}
Let $B=\Bbbk_q [\bfx]^{(v)}$ when $n$ is even and
let $R$ be the center of $B$. Suppose that $m$ is invertible 
in $\Bbbk$ and that $v$ divides 
$wp(\frac{m}{g}-1)$. Then
$$d_w^{[p]}(B/R) =_{\Bbbk^{\times}} 
(x_1 x_2 \cdots x_n)^{wp(\frac{m}{g}-1)}
=_{\Bbbk^{\times}} 
(x_1^v x_2^v \cdots x_n^v)^{\frac{wp}{v}(\frac{m}{g}-1)}.
$$
As a consequence, $d_w^{[p]}(B/R)$ is stable.
\end{theorem}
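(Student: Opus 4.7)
The plan is to mirror the proof of Theorem \ref{xxthm4.4} in the even-$n$ setting, using the even analogues established in Lemmas \ref{xxlem5.1} and \ref{xxlem5.2}. First I would invoke Lemmas \ref{xxlem2.4}(4) and \ref{xxlem4.3}, whose statements apply for either parity once the set $M$ of Lemma \ref{xxlem5.1}(2) is in hand, to reduce the calculation to
$$d_w^{[p]}(B/R) =_{\Bbbk^\times} \gcd \Lambda^{2p}, \qquad
\Lambda = \left\{ \prod_{i=1}^w \bfx^{\bfs_i} \;\middle|\; \bfs_i \in \N^n\cap (M + \bfb_i)\right\},$$
where $\{\bfb_1,\ldots,\bfb_w\}\subseteq H_v^+$ is any complete set of coset representatives of $M$ in $H_v$.

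For each index $\mu$, let $f_\mu$ denote the largest integer such that $x_\mu^{f_\mu}$ divides every element of $\Lambda$ viewed in the over-algebra $\Bbbk_q[\bfx]$. Then $f_\mu$ equals the sum, over all $w$ cosets of $M$ in $H_v$, of the minimum value of $p_\mu$ among non-negative representatives of the coset. For $\mu=1$, Lemma \ref{xxlem5.2}(1) produces a unique representative with $0\le p_1<m/g$, and by the same counting argument as in the odd-$n$ case, exactly $w/(m/g) = m^{n-1}/g$ cosets realize each of the values $0,1,\ldots,m/g-1$, giving
$$f_1 = \frac{m^{n-1}}{g}\cdot\frac{(m/g)(m/g-1)}{2} = \frac{w(m/g-1)}{2}.$$

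The main obstacle is showing $f_\mu = f_1$ for every $\mu$. In the odd case this was handled by the final sentence of Lemma \ref{xxlem4.1}(1), which makes the index-symmetry explicit; no such explicit statement is available in Lemma \ref{xxlem5.2}. I plan to handle this by re-running the proof of Lemma \ref{xxlem5.2}(1) with the roles of indices permuted: the generator $(m/g)\sum_i (-1)^{i-1}\bfe_i\in M$ has component $\pm m/g$ in \emph{every} coordinate, and the vectors $m(\bfe_i-\bfe_j)$ and $(mv/g)\bfe_\nu$ lie in $M$ for arbitrary $i,j,\nu$, so the same chain of reductions bounds $0\le p_\mu<m/g$ for any choice of $\mu$ and yields the identical count. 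Hence $f_\mu = w(m/g-1)/2$ for every $\mu$.

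Consequently the gcd of $\Lambda^{2p}$ in $\Bbbk_q[\bfx]$ is $(x_1 x_2\cdots x_n)^{wp(m/g-1)}$. Under the hypothesis $v\mid wp(m/g-1)$, the total degree $n\cdot wp(m/g-1)$ of this monomial is divisible by $v$, hence the monomial lies in $B=\Bbbk_q[\bfx]^{(v)}$ and serves as the gcd in $B$ as well, giving the stated formula; the alternate form $(x_1^v\cdots x_n^v)^{\frac{wp}{v}(m/g-1)}$ is then immediate. Stability of $d_w^{[p]}(B/R)$ follows directly, since raising the explicit formula to the $i$th power matches the formula for $d_w^{[ip]}(B/R)$.
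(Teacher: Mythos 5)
Your proposal is correct and follows essentially the same route as the paper's proof: reduce via Lemmas \ref{xxlem2.4}(4) and \ref{xxlem4.3} to computing $\gcd\Lambda^{2p}$, compute $f_1=\frac{w}{2}(\frac{m}{g}-1)$ by counting coset representatives from Lemma \ref{xxlem5.2}, and use the divisibility hypothesis to land the resulting monomial in $H_v^+$. The only difference is that you spell out the index-symmetry argument for $f_\mu=f_1$ (via the permutation-invariance of the generators of $M$), which the paper merely cites as ``by symmetry'' with reference to Theorem \ref{xxthm4.4}; this is a welcome but inessential elaboration.
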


\begin{proof} This proof is similar to the proof of
Theorem \ref{xxthm4.4}.

By Lemmas \ref{xxlem2.4}(4)
and \ref{xxlem4.3}, $d_w^{[p]}(B/R) =\gcd \Lambda^{2p}$
where 
$$\Lambda:=\left\{ \prod_{i=1}^w \bfx^{ {\bf s}_i} 
\, \middle| \, {\bf s}_i\in \N^n \cap ( M + {\bf b}_i )\right\}.
$$
For each $1 \le s \le n$, let $f_s \in \N$ be maximal such 
that $x_s^{f_s}$ divides all elements of $\Lambda$. 
This gives $\bfx^{2p{\bf f}}$ as the gcd of $\Lambda^{2p}$ 
in the over-algebra $\Bbbk_q[\bfx]\supseteq B$ where ${\bf f}=
(f_1,\cdots,f_n)$. If $2p {\bf f} \in H_v^{+}$, then it is the gcd 
of $\Lambda^{2p}$ in $\Bbbk_q[\bfx]^{(v)}$ as well.

By symmetry (see the proof of Theorem \ref{xxthm4.4} for a similar argument),  
$f_1=f_2=\cdots =f_n$, and we will only
work out $f_1$. We calculate $f_1$ by summing the lowest 
powers of $x_1$ in each coset of $M$ (or more precisely, 
in each $\N^n \cap ( M + {\bf b}_i )$ for different 
$i$). These lowest powers can be found by using the 
representatives outlined in Lemma \ref{xxlem5.2}, 
which also shows that this power cannot exceed 
$m/g - 1$. For each $0 \le k \le m/g-1$, there are 
$m^{n-1}/g$ cosets with lowest power $x_1^k$. Therefore, 
the sum is 
$$f_1 = \frac{m^{n-1}}{g} \; \frac{(m/g)(m/g-1)}{2} = \frac{w}{2}\left(\frac{m}{g}-1\right).$$ 
Now $2p{\bf f}=wp(m/g-1) (1,1,\cdots,1)\in H_v^{+}$ as $v$ 
divides $wp(m/g-1)$. 
The assertion follows from the last paragraph, and stability of
$d_w^{[p]}(B/R)$ follows clearly from the main assertion.
\end{proof}

\section{Application I: automorphism group}
\label{xxsec6}

For any algebra $A$, let $\Aut(A)$ denote the group
of all algebra automorphisms of $A$. When $A$ is 
${\mathbb N}$-graded, let $\Aut_{gr}(A)$ denote the group
of all graded algebra automorphisms of $A$. 

In this section we only consider the algebra 
$A:=k_q[{\bf x}]^{(v)}$ and use 
$g$ for an algebra automorphism of $A$. 
First we consider an algebra automorphism $g$ satisfying
\begin{equation}
\label{E6.0.1}\tag{E6.0.1}
g((x_1^v \cdots x_n^v)^a)=_{\Bbbk^{\times}} 
(x_1^v \cdots x_n^v)^a, \quad {\text{for some positive integer $a$.}}
\end{equation}
The first few lemmas discuss some easy  
properties of $g$ satisfying \eqref{E6.0.1}.

There is a natural $\N^n$-grading on the skew polynomial
ring $\Bbbk_q[{\bf x}]$ with $\deg x_i={\bf e}_i$ for
$i=1,\cdots,n$. We consider $\Bbbk_q[{\bf x}]^{(v)}$ as
an $\N^n$-graded subalgebra of $\Bbbk_q[{\bf x}]$. 
Both $\Bbbk_q[{\bf x}]$ and $\Bbbk_q[{\bf x}]^{(v)}$
are also $\N$-graded by considering the total degree. We
will use both gradings in this section.

For any permutation $\pi$ of $\{1, ..., n\}$, 
we denote the linear function 
$\boldsymbol\pi: \mathbb{Z}^n \rightarrow 
\mathbb{Z}^n$ 
determined by 
$\pi: {\bf e}_i \mapsto {\bf e}_{\pi(i)}$.
For a permutation $\pi\in S_n$, we have
\begin{equation}
\label{E6.0.2}\tag{E6.0.2}
\bfx^{\pi(\bfs)}=x_1^{s_{\pi^{-1}(1)}}\cdots x_{n}^{s_{\pi^{-1}(n)}}
\end{equation}
and denote
\begin{equation}
\label{E6.0.3}\tag{E6.0.3}
\bfx^{\bfs}_{\pi}: =x_{\pi(1)}^{s_1}\cdots x_{\pi(n)}^{s_n}.
\end{equation}
It is clear that $\bfx^{\pi(\bfs)}=_{\Bbbk^{\times}} \bfx^{\bfs}_{\pi}$.

\begin{lemma}
\label{xxlem6.1} Let $g\in \Aut(\Bbbk_q[{\bf x}]^{(v)})$
satisfying \eqref{E6.0.1} in parts {\rm{(1)-(4)}}.
\begin{enumerate}
\item[(1)]
The image of every monomial through $g$ is 
a $\Bbbk^\times$-multiple of a monomial.
\item[(2)]
$\deg g(f)=\deg f$ for any monomial $f$. As a consequence,
$g$ is a graded algebra automorphism.
\item[(3)]
The image of each $x_i^v$ is a $\Bbbk^\times$-multiple 
of some $x_j^v$.
\item[(4)]
There exists a permutation $\pi_g$ of $\{1, ..., n\}$ 
such that each monomial ${\bf x}^{\bf s}$ is mapped 
to a $\Bbbk^\times$-multiple of ${\bf x}^{\pi_g({\bf s})}$.
\item[(5)]
$g$ satisfies \eqref{E6.0.1} if and only if, for each $i$,
there is a $j$ such that 
$$g(x_i^v)=_{\Bbbk^{\times}} x_j^v.$$
\end{enumerate}
\end{lemma}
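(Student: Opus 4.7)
Set $F := x_1^v x_2^v \cdots x_n^v$ and $u_i := g(x_i^v)$, so the hypothesis \eqref{E6.0.1} reads $(g(F))^a =_{\Bbbk^{\times}} F^a$ for some positive integer $a$. The whole argument rests on one elementary principle in the $\N^n$-multigraded domain $\Bbbk_q[\bfx]$ (with $\deg x_i = \bfe_i$): \emph{if a product $U_1 U_2 \cdots U_k$ of nonzero elements is a $\Bbbk^{\times}$-multiple of a single monomial, then each $U_i$ is a $\Bbbk^{\times}$-multiple of a single monomial.} Fixing a term order on $\N^n$ compatible with addition, the max (resp.\ min) of the support of the product equals the sum of the maxes (resp.\ mins) of the factors' supports, since leading terms cannot cancel in a domain; if the product's support is a singleton, each factor's support must also be a singleton. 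Applied with $U_1 = \cdots = U_k = U$, the same reasoning shows that $U^a$ being a $\Bbbk^{\times}$-multiple of a monomial forces $U$ itself to be such.

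First I would prove (2). The one-dimensional analogue of this principle, applied in the total-degree $\N$-grading of the connected graded domain $A$, shows $g(F)$ is $\N$-homogeneous of degree $nv$, and hence each $u_i$ is $\N$-homogeneous of degree $vk_i$ with $k_i \ge 1$ (as $u_i \notin \Bbbk$); the identity $\sum k_i = n$ forces $k_i = 1$, so $u_i \in A_v$, giving (2) since $A$ is generated in degree $v$. For (1), apply the key principle to $(g(F))^a =_{\Bbbk^{\times}} F^a$ to see $g(F)$ is a $\Bbbk^{\times}$-multiple of a monomial, and then to $g(F) = u_1 u_2 \cdots u_n$ to obtain $u_i = c_i \bfx^{\bfs_i}$ for some $c_i \in \Bbbk^{\times}$ and $\bfs_i \in \N^n$ satisfying $|\bfs_i| = v$ and $\sum_i \bfs_i = v(1,\ldots,1)$. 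For a general monomial $\bfx^{\bft} \in A$, the relation $(\bfx^{\bft})^v =_{\Bbbk^{\times}} \prod_i (x_i^v)^{t_i}$ combined with multiplicativity of $g$ gives $(g(\bfx^{\bft}))^v =_{\Bbbk^{\times}} \bfx^{\sum_i t_i \bfs_i}$, and another invocation of the principle produces $g(\bfx^{\bft}) = \mu_{\bft} \bfx^{\phi(\bft)}$ with $v\phi(\bft) = \sum_i t_i \bfs_i$; this completes (1).

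For (3) and (4), the critical constraint comes from requiring $\phi(\bft) \in \N^n$ for every $\bft \in H_v^{+}$. Taking $\bft = (v-1)\bfe_i + \bfe_j$ forces $(v-1)\bfs_i + \bfs_j \in v\Z^n$, so $\bfs_i \equiv \bfs_j \pmod v$ componentwise. Writing the common residue as $\mathbf{r} \in \{0,\ldots,v-1\}^n$ and $\bfs_i = \mathbf{r} + v\bfa_i$ with $\bfa_i \in \N^n$, the constraint $|\bfs_i|=v$ gives $|\mathbf{r}|+v|\bfa_i|=v$, so $|\mathbf{r}| \in \{0,v\}$; if $|\mathbf{r}|=v$ then all $\bfa_i=0$ and all $\bfs_i = \mathbf{r}$, which would force $g(x_1^v), \ldots, g(x_n^v)$ to be scalar multiples of the same monomial, contradicting injectivity of $g$. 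Hence $\mathbf{r}=0$ and $\bfs_i = v\bfe_{\pi_g(i)}$ for some function $\pi_g$, which is a permutation of $\{1,\ldots,n\}$ by the same injectivity argument—proving (3). Part (4) follows immediately: $\phi(\bft) = (1/v)\sum_i t_i \cdot v\bfe_{\pi_g(i)} = \pi_g(\bft)$.

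For (5), the ``only if" direction is contained in (3). For ``if", if $g(x_i^v) = \lambda_i x_{j_i}^v$ for some function $i \mapsto j_i$ and $\lambda_i \in \Bbbk^\times$, the linear independence of the images forces $j_i$ to be a permutation, and then $g(F)$ computes to a $\Bbbk^{\times}$-multiple of $F$, verifying \eqref{E6.0.1} with $a=1$. The main technical hurdle I anticipate is extracting the mod-$v$ congruence $\bfs_i \equiv \bfs_j \pmod v$ that rules out pathological configurations such as $\bfx^{\bfs_i}$ being a balanced product of several distinct $x_k^v$'s at fractional powers; once the key multigraded principle and the power relation $(\bfx^{\bft})^v \in \Bbbk^{\times} \cdot \prod_i (x_i^v)^{t_i}$ are in hand, the remainder is a bookkeeping exercise with multidegrees.
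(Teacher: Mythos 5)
Your proof is correct, and its engine---that in the $\N^n$-graded domain $\Bbbk_q[\bfx]$ any factor of a $\Bbbk^\times$-multiple of a monomial is again a $\Bbbk^\times$-multiple of a monomial, because $\N^n$ is an ordered semigroup and leading/trailing terms cannot cancel in a domain---is exactly the mechanism the paper uses for part (1); your treatments of (4) (the $v$th-power trick $g(\bfx^{\bfs})^v=g(\bfx^{v\bfs})$) and (5) also coincide with the paper's. You genuinely diverge on (2) and (3). For (3) the paper avoids all exponent bookkeeping with a one-line characterization: a degree-$v$ monomial $f$ equals some $x_i^v$ if and only if the factorization $f^2=_{\Bbbk^\times}f_1f_2$ into degree-$v$ monomials is unique, and uniqueness of such factorizations is preserved by any automorphism. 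Your route instead extracts the congruence $\bfs_i\equiv\bfs_j\pmod v$ from integrality of $\phi((v-1)\bfe_i+\bfe_j)$ and eliminates the case $|\mathbf r|=v$ by injectivity; this is longer but yields the explicit formula $v\phi(\bft)=\sum_i t_i\bfs_i$ as a byproduct, which makes (4) immediate. For (2) the paper argues $\deg g(f)\ge\deg f$ from minimality of $v$ among degrees of non-scalar elements and then applies the same to $g^{-1}$; your count $\sum_i k_i=n$ with each $k_i\ge 1$ is a clean substitute, but as literally written it only pins down $\deg g(x_i^v)$, not $\deg g(\bfx^{\bft})$ for an arbitrary degree-$v$ monomial. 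That general case should be deduced from $|\phi(\bft)|=\tfrac1v\sum_i t_i|\bfs_i|=|\bft|$, so in your write-up the material of (1) should logically precede (2); with that reordering the argument is complete.
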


\begin{proof}
(1) By \eqref{E6.0.1}, $g((x_1 x_2 \cdots x_n)^{vaN})=_{\Bbbk^{\times}}
(x_1 x_2 \cdots x_n)^{vaN}$ for all $N>0$. 
Let $f$ be any monomial in $\Bbbk_q[{\bf x}]^{(v)}$. Then
$f$ is a factor of $(x_1 x_2 \cdots x_n)^{vaN}$ for some 
$N>0$. Let $f'$ be a monomial such that 
$ff'=_{\Bbbk^{\times}}(x_1 x_2 \cdots x_n)^{vaN}$. Then 
$$g(f) g(f')=g((x_1 x_2 \cdots x_n)^{vaN})=_{\Bbbk^{\times}}
(x_1 x_2 \cdots x_n)^{vaN}.$$
Since $\N^n$ is an ordered semigroup and 
$\Bbbk_q[{\bf x}]^{(v)}$ is an $\N^n$-graded domain,
both $g(f)$ and $g(f')$ are $\N^n$-homogeneous. Every
$\N^n$-homogeneous element is a $\Bbbk^\times$-multiple 
of a monomial. The assertion follows.

(2) Note that the lowest total degree of a non-scalar 
element in $\Bbbk_q[{\bf x}]^{(v)}$ is $v$. Applying 
$g$ to the monomials $f$ of (total) degree $v$, we have that
$\deg g(f)\geq v=\deg f$. Since every monomial in 
$\Bbbk_q[{\bf x}]^{(v)}$ is a product of monomials of 
degree $v$, $\deg g(f)\geq \deg f$ for all monomials. 
By symmetry, $\deg g^{-1}(f)\geq \deg f$. The assertion
follows.

(3) If $f$ is a degree $v$ monomial of $\Bbbk_q[{\bf x}]^{(v)}$, 
$f^2$ can be decomposed as 
$$f^2 =_{\Bbbk^\times} f_1 f_2$$ 
where $f_1, f_2$ are degree $v$ monomials. 
The decomposition is unique if and only if 
$f= x_i^v$ for some $i$. This property is invariant under $g$.

(4) We choose $\pi$ so that, for each $i$, we have 
$g(x_i^v) =_{\Bbbk^\times} x_{\pi(i)}^v$ by part (3). 
For any monomial ${\bf x}^{\bf s}$, we have 
\begin{equation}
\label{E6.1.1}\tag{E6.1.1}
g({\bf x}^{\bf s})^v = g({\bf x}^{v\bf s}) 
=_{\Bbbk^\times} x_{\pi(1)}^{vs_1} \cdots x_{\pi(n)}^{vs_n} 
=_{\Bbbk^\times} ({\bf x}^{\pi_g({\bf s})})^v,
\end{equation}
which implies that 
$g({\bf x}^{\bf s}) 
=_{\Bbbk^\times} {\bf x}^{\pi_g({\bf s})}.$

(5) One implication is part (3) and the other implication
is clear.
\end{proof}

Next we wish to understand the coefficients of the image
of $g$. The next lemma deals with the case when $\pi_g$
is the identity. For any automorphism $g$ of 
$\Bbbk_q[{\bf x}]^{(v)}$, we say $\pi_g=1$ if 
$$g(x_i^v)=_{\Bbbk^{\times}} x_i^v$$
for all $i=1,\cdots,n$. Let $\Aut_{1}(\Bbbk_q[{\bf x}]^{(v)})$
be the subgroup of $\Aut(\Bbbk_q[{\bf x}]^{(v)})$ 
consisting of automorphisms $g$ with $\pi_g=1$. 
It is clear that $\Aut_{1}(\Bbbk_q[{\bf x}]^{(v)})\subseteq
\Aut_{gr}(\Bbbk_q[{\bf x}]^{(v)})$.

\begin{lemma}
\label{xxlem6.2} Retain the above notation.
\begin{enumerate}
\item[(1)]
Let $g \in \Aut_{1}(\Bbbk_q[{\bf x}]^{(v)})$.
Then there exist $(c, k_2, ..., k_n)\in (\Bbbk^\times)^{n}$ such that 
for each monomial ${\bf x}^{\bf s}$ of degree $Nv$, 
\begin{equation}
\label{E6.2.1}\tag{E6.2.1}
g({\bf x}^{\bf s}) = c^N k_2^{s_2} \cdots k_n^{s_n} {\bf x}^{\bf s}.
\end{equation}
\item[(2)]
Conversely, given $(c, k_2, ..., k_n)\in (\Bbbk^\times)^{n}$, then \eqref{E6.2.1}
defines a unique algebra automorphism $g \in\Aut_{1}(\Bbbk_q[{\bf x}]^{(v)})$.
\end{enumerate}
As a consequence, $\Aut_{1}(\Bbbk_q[{\bf x}]^{(v)})\cong (\Bbbk^{\times})^{n}$.
\end{lemma}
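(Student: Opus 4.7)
The plan is to apply Lemma \ref{xxlem6.1} to reduce part (1) to computing a character $\lambda : H_v^{+} \to \Bbbk^\times$ satisfying $g(\bfx^{\bfs}) = \lambda_{\bfs}\, \bfx^{\bfs}$, then to extend $\lambda$ to a group homomorphism on $H_v$ and expand it in a convenient $\Z$-basis to recover the product form in \eqref{E6.2.1}.

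Concretely, for part (1), any $g \in \Aut_{1}(\Bbbk_q[\bfx]^{(v)})$ trivially satisfies \eqref{E6.0.1} with $a = 1$, since $g(x_i^v) =_{\Bbbk^\times} x_i^v$ for each $i$ forces $g(x_1^v \cdots x_n^v) \in \Bbbk^\times \cdot (x_1^v \cdots x_n^v)$. Hence Lemma \ref{xxlem6.1}(4) with $\pi_g = 1$ yields $g(\bfx^{\bfs}) = \lambda_{\bfs}\, \bfx^{\bfs}$ for every $\bfs \in H_v^{+}$ and some $\lambda_{\bfs} \in \Bbbk^\times$. Writing $\bfx^{\bfs} \bfx^{\bft} = q^{\gamma(\bfs,\bft)} \bfx^{\bfs+\bft}$ for the usual commutation exponent $\gamma(\bfs,\bft) = \sum_{i<j} s_j t_i$ and applying $g$ to both sides, the $q$-factor cancels, giving $\lambda_{\bfs+\bft} = \lambda_{\bfs} \lambda_{\bft}$. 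Thus $\lambda$ is a monoid homomorphism $H_v^{+} \to \Bbbk^\times$.

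Next I would observe that $\bfe_i - \bfe_1 = (\bfe_i + (v-1)\bfe_1) - v\bfe_1$ is a difference of two elements of $H_v^{+}$, so $H_v$ is generated by $H_v^{+}$ and $\lambda$ extends uniquely to a homomorphism $\tilde{\lambda} : H_v \to \Bbbk^\times$. The group $H_v$ is free abelian of rank $n$ with $\Z$-basis $v\bfe_1,\ \bfe_2-\bfe_1,\ \ldots,\ \bfe_n-\bfe_1$. Setting $c := \tilde{\lambda}(v\bfe_1)$ and $k_i := \tilde{\lambda}(\bfe_i - \bfe_1)$ for $i = 2, \ldots, n$, any $\bfs \in H_v^{+}$ with $\sum_i s_i = Nv$ decomposes as $\bfs = Nv\,\bfe_1 + \sum_{i=2}^{n} s_i(\bfe_i - \bfe_1)$ in $H_v$, yielding $\lambda_{\bfs} = c^N \prod_{i=2}^n k_i^{s_i}$, which is \eqref{E6.2.1}.

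For part (2), I would define $g$ by \eqref{E6.2.1} and extend $\Bbbk$-linearly. Since both $N = \frac{1}{v}\sum_i s_i$ and each $s_i$ are additive in $\bfs$, the scaling factor $c^N k_2^{s_2} \cdots k_n^{s_n}$ is multiplicative in $\bfs$, so the $q$-commutation scalars are respected and $g$ is a $\Bbbk$-algebra endomorphism. Invertibility is immediate from nonvanishing of the scalars, and $g(x_i^v) \in \Bbbk^\times \cdot x_i^v$ (with the convention $k_1 := 1$), so $g \in \Aut_1$. The assignment $g \mapsto (c,k_2,\ldots,k_n)$ is then a group isomorphism $\Aut_{1}(\Bbbk_q[\bfx]^{(v)}) \cong (\Bbbk^\times)^n$, because composition of two such automorphisms multiplies the respective tuples componentwise. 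The main obstacle is the bookkeeping in the middle step: choosing the $\Z$-basis of $H_v$ so that the explicit product form emerges cleanly rather than as an opaque character on $H_v^{+}$.
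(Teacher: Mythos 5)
Your proof is correct, and it reaches \eqref{E6.2.1} by a somewhat different route than the paper. Both arguments start the same way: $g$ satisfies \eqref{E6.0.1}, Lemma \ref{xxlem6.1}(4) gives $g(\bfx^{\bfs})=\lambda_{\bfs}\bfx^{\bfs}$, and multiplicativity of $\lambda$ on $H_v^{+}$ follows by applying $g$ to $\bfx^{\bfs}\bfx^{\bft}=q^{\gamma(\bfs,\bft)}\bfx^{\bfs+\bft}$. Where you diverge is in extracting the explicit product form: you pass to the group completion, extending $\lambda$ to a character $\tilde\lambda$ on the full lattice $H_v$, and then read off \eqref{E6.2.1} from the $\Z$-basis $\{v\bfe_1,\ \bfe_2-\bfe_1,\ \dots,\ \bfe_n-\bfe_1\}$ (whose verification as a basis is correct: the change-of-basis matrix has determinant $v$, matching the index of $H_v$ in $\Z^n$). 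The paper instead stays entirely inside $\Bbbk_q[\bfx]^{(v)}$ and uses the concrete factorization $x_1^{Nv(v-1)}\bfx^{\bfs}=_{\Bbbk^\times}(x_1^{v-1}x_1)^{s_1}\cdots(x_1^{v-1}x_n)^{s_n}$, defining $c$ and $k_i$ by $g(x_1^v)=cx_1^v$ and $g(x_1^{v-1}x_i)=ck_ix_1^{v-1}x_i$ and then cancelling the $x_1$-power; note your $c$ and $k_i$ agree with these since $\tilde\lambda(\bfe_i-\bfe_1)=\lambda_{\bfe_i+(v-1)\bfe_1}/\lambda_{v\bfe_1}$. Your version is more structural and makes the role of the lattice $H_v$ transparent (it also dovetails with the coset language of Sections 4--5), at the cost of the group-completion step; the paper's is more elementary but hides the lattice bookkeeping in one monomial identity. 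Your treatment of part (2) and of the group isomorphism is fine and in fact more detailed than the paper, which omits it.
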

\begin{proof}
(1) Let $c$ be such that $g(x_1^v) = c x_1^v$, and let $k_1 = 1$. 
For each $i \ne 1$, let $k_i$ be such that 
$g(x_1^{v-1} x_i) = c k_i x_1^{v-1} x_i$. For any monomial 
${\bf x}^{\bf s}$ of degree $Nv$ (which means that it is in
$\Bbbk_q[{\bf x}]^{(v)}$), there exists a scalar $r \in \Bbbk^{\times}$ 
such that
$$x_1^{Nv(v - 1)} {\bf x}^{\bf s} = 
r(x_1^{v-1} x_1)^{s_1} \cdots (x_1^{v-1} x_n)^{s_n}$$
Therefore
$$\begin{aligned}
c^{N(v-1)} x_1^{Nv(v - 1)} g({\bf x}^{\bf s}) 
&= g(x_1^{Nv(v - 1)} {\bf x}^{\bf s})\\
&= r(c k_1 x_1^{v-1} x_1)^{s_1} \cdots (c k_n x_1^{v-1} x_n)^{s_n} \\
&= c^{Nv} k_1^{s_1} \cdots k_n^{s_n}x_1^{Nv(v - 1)} {\bf x}^{\bf s},
\end{aligned}
$$
which implies that
$$g({\bf x}^{\bf s}) = c^N k_2^{s_2} \cdots k_n^{s_n} {\bf x}^{\bf s}.$$

(2) This is easy and the proof is omitted.
\end{proof}

For any $g_1, g_2 \in \Aut(\Bbbk_q[{\bf x}]^{(v)})$ 
such that $\pi_{g_1} = \pi_{g_2}$, we have 
$\pi_{g_1^{-1} \circ g_2} = 1$, and there exist 
$c, k_1=1, k_2, ..., k_n \in \Bbbk^\times$ such that 
for any monomial ${\bf x}^{\bf s}$ of degree $Nv$,
\begin{equation}
\label{E6.2.2}\tag{E6.2.2}
g_2({\bf x}^{\bf s}) = c^N k_1^{s_1} \cdots k_n^{s_n} 
g_1({\bf x}^{\bf s}).
\end{equation}

The automorphism group can therefore be fully 
determined by determining the possible values 
of $\pi_g$ and producing an example automorphism for each.
We discuss possible $\pi_g$ in the next lemma.

\begin{lemma}
\label{xxlem6.3} Let $g$ denote an automorphism 
of $\Bbbk_q[{\bf x}]^{(v)}$ satisfying \eqref{E6.0.1}.
\begin{enumerate}
\item[(1)]
If $q = \pm 1$, for every permutation $\pi$ of 
$\{1, ..., n\}$, there exists $g$ such that 
$\pi_g = \pi$, and for each ${\bf x}^{\bf s}$, 
$g({\bf x}^{\bf s}) = {\bf x}^{\bf s}_{\pi_g}$.
\item[(2)]
If $q^v \ne \pm 1$, then, for any $g$, we have
$\pi_g = 1$. 
\item[(3)]
If $q^v = \pm 1$, then, for each $m \in \mathbb{Z}$, 
there exists $g$, such that $\pi_g$ is addition 
by $m$ modulo $n$, 
and  $g({\bf x}^{\bf s}) = {\bf x}^{\bf s}_{\pi_g}$.
\item[(4)]
If $q^v = \pm 1$ and $q \ne \pm 1$, then, for any $g$, 
there exists $m \in \mathbb{Z}$ such that $\pi_g$ 
is addition by $m$ modulo $n$. 
\end{enumerate}
\end{lemma}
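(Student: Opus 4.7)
The plan is to address parts (1)--(4) separately, with (1) and (3) being constructive existence results and (2) and (4) being constraint derivations from Lemma~\ref{xxlem6.1}(4).

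For (1), I observe that $q=\pm 1$ forces $q=q^{-1}$, so the relation $x_j x_i = q x_i x_j$ is symmetric in $(i,j)$; the assignment $x_i \mapsto x_{\pi(i)}$ thus extends to an automorphism of $\Bbbk_q[\bfx]$ for every $\pi \in S_n$, whose restriction to $\Bbbk_q[\bfx]^{(v)}$ gives the stated $g$ with $g(\bfx^\bfs) = \bfx^\bfs_{\pi_g}$ literally. For (3), write the shift parameter as $k$ (to distinguish it from the order $m$ of $q$) and let $\sigma(i) = i+k \bmod n$. I define $g$ on each degree-$Nv$ monomial by $g(\bfx^\bfs) = c_\bfs\, \bfx^\bfs_\sigma$, for scalars $c_\bfs \in \Bbbk^{\times}$ chosen to absorb the skew-commutation discrepancy. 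Using the product formula $\bfx^\bfs_\sigma \bfx^\bft_\sigma = q^{\sum_{i<j}\epsilon_\sigma(i,j) s_j t_i}\bfx^{\bfs+\bft}_\sigma$, where $\epsilon_\sigma(i,j) = +1$ if $\sigma(i) < \sigma(j)$ and $-1$ otherwise, multiplicativity of $g$ becomes the cocycle equation
\[
\frac{c_{\bfs+\bft}}{c_\bfs c_\bft} \;=\; q^{-2\sum_{(i,j)\in\mathrm{Inv}(\sigma)} s_j t_i}.
\]
Since $\mathrm{Inv}(\sigma) = \{(i,j): i \leq n-k < j\}$ is rectangular, the exponent equals $-2\,s^{+}(|\bft|-t^{+})$ with $s^{+} = \sum_{j>n-k}s_j$; the $|\bft|$-piece vanishes modulo $m$ (as $m \mid 2v$ from $q^v = \pm 1$ and $|\bft| \in v\Z$), leaving the coboundary exponent $2 s^{+} t^{+}$, which is resolved by $c_\bfs = q^{(s^{+})^2}$. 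The resulting $g$ is bijective (inverse corresponds to shift by $-k$), and the identity $g(\bfx^\bfs) = \bfx^\bfs_{\pi_g}$ holds up to the unit $c_\bfs$ in the sense of Lemma~\ref{xxlem6.1}(4).

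For (2) and (4), I start from Lemma~\ref{xxlem6.1}(4) giving $g(\bfx^\bfs) =_{\Bbbk^{\times}} \bfx^{\pi_g(\bfs)}$. Preservation of $\bfx^\bfs \bfx^\bft = q^{\omega(\bfs,\bft)}\bfx^\bft \bfx^\bfs$, where $\omega(\bfs,\bft) = \sum_{i<j}(s_j t_i - s_i t_j)$, yields the divisibility
\[
\omega(\bfs,\bft) - \omega(\pi_g(\bfs),\pi_g(\bft)) \;=\; 2\sum_{(i,j)\in\mathrm{Inv}(\pi_g)}(s_j t_i - s_i t_j) \;\in\; m\Z \quad \forall\, \bfs,\bft \in H_v.
\]
For (2), specializing to $\bfs = (v-1)\bfe_i+\bfe_j$ and $\bft = v\bfe_i$ (equivalent to applying $g$ to the identity $(x_i^{v-1}x_j)x_i^v = q^v x_i^v(x_i^{v-1}x_j)$) gives: if $(i,j) \in \mathrm{Inv}(\pi_g)$ for some $i<j$, then $m \mid 2v$, i.e., $q^v = \pm 1$, contradicting the hypothesis; hence $\pi_g = \mathrm{id}$. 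For (4), $q \ne \pm 1$ gives $m \geq 3$ while $m \mid 2v$ holds from $q^v = \pm 1$; testing the divisibility with $\bfs = \bfe_i+(v-1)\bfe_j$ and $\bft = \bfe_j+(v-1)\bfe_k$ over triples of distinct indices rules out any non-rectangular $\mathrm{Inv}(\pi_g)$, and since rectangular inversion sets $\{(i,j): i \leq n-c < j\}$ correspond bijectively to the cyclic shifts by $c$, this forces $\pi_g$ to be a cyclic shift.

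The main obstacle is the combinatorial step in (4): showing that non-cyclic permutations necessarily violate the divisibility requires a case analysis on the structure of $\mathrm{Inv}(\pi_g)$, and producing explicit test monomials in $H_v$ whose contributions detect the non-rectangular features. The cocycle verification in (3) is routine once the ansatz $c_\bfs = q^{(s^{+})^2}$ is identified, and parts (1) and (2) are direct.
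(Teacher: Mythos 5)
Parts (1), (2) and (3) are correct and essentially follow the paper's route. For (1) and (2) your arguments are the paper's arguments: in (2) your test pair $\bfs=(v-1)\bfe_i+\bfe_j$, $\bft=v\bfe_i$ is exactly the paper's relation $(x_i^{v-1}x_j)x_i^v=q^{\pm v}x_i^v(x_i^{v-1}x_j)$, and $q^v\neq q^{-v}$ forces $\pi_g$ to be order-preserving, hence the identity. For (3) the paper reduces to the shift by $1$ and uses the cocycle $\alpha(\bfs)=s_n^2$; your $c_{\bfs}=q^{(s^{+})^2}$ with $s^{+}=\sum_{j>n-k}s_j$ is the direct generalization (it specializes to the paper's choice when $k=1$), and your verification that the coboundary exponent reduces to $2s^{+}t^{+}$ modulo $m$ using $m\mid 2v$ and $v\mid |\bft|$ is sound. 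The residual scalar ambiguity in ``$g(\bfx^{\bfs})=\bfx^{\bfs}_{\pi_g}$'' is present in the paper as well and is harmless.

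Part (4), however, has a genuine gap. You reduce the problem to showing that the congruence $\sum_{(i,j)\in\mathrm{Inv}(\pi_g)}(s_jt_i-s_it_j)\equiv 0$ modulo $m/\gcd(2,m)$ for all $\bfs,\bft\in H_v^{+}$ forces $\mathrm{Inv}(\pi_g)$ to be a rectangle $\{(i,j):i\le n-c<j\}$, and you yourself flag this combinatorial step as ``the main obstacle'' without carrying it out. Your proposed test vectors $\bfs=\bfe_i+(v-1)\bfe_j$, $\bft=\bfe_j+(v-1)\bfe_k$ produce contributions involving $v-1$ and $(v-1)^2$ modulo $m/\gcd(2,m)$, whose vanishing or nonvanishing depends on the arithmetic relation between $v$ and $m$; it is not shown that these detect every non-rectangular feature of an inversion set, and the subsequent case analysis is absent. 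This is precisely the part the paper handles differently and cleanly: set $y_{i,j}=x_i^{v-1}x_j$, so that $g(y_{i,j})=_{\Bbbk^{\times}}y_{\pi(i),\pi(j)}$ and $y_{i,j}y_{i,k}=q^{\varepsilon}y_{i,k}y_{i,j}$ with $\varepsilon=\pm 1$ determined only by the cyclic order of $(i,j,k)$. Since $m>2$ gives $q\neq q^{-1}$, the sign $\varepsilon$ is a $g$-invariant, so for each fixed pair $i\neq j$ the number of indices $k$ with $\varepsilon=+1$, which equals $j-i-1$ modulo $n$, is preserved; this yields $\pi_g(j)-\pi_g(i)\equiv j-i \pmod n$ directly, with no case analysis on inversion sets. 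You should either adopt this counting argument or supply the missing combinatorial classification; as written, part (4) is not proved.
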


\begin{proof}
(1) The relations of $\Bbbk_q[{\bf x}]$ are simply 
$x_i x_j = q x_j x_i$ for all $i \ne j$. Therefore
any permutation $\pi$ of the generators $x_1,...,x_n$ extends
to an automorphism $g$ of $\Bbbk_q[{\bf x}]$, and $g$
restricts to an automorphism of $\Bbbk_q[{\bf x}]^{(v)}$.

(2) For any distinct $i, j$, we have 
\begin{equation}
\label{E6.3.1}\tag{E6.3.1}
(x_i^{v-1} x_j) x_i^v = r_{i,j} x_i^v (x_i^{v-1} x_j)
\end{equation}
where
$$r_{i,j} = \left\{
\begin{array}{ll}
q^{-v} & j < i,\\
q^v & i < j.\\
\end{array}
\right.$$
We apply $g$ to both sides of \eqref{E6.3.1}. Then Lemma 
\ref{xxlem6.1}(5) shows that $r_{i,j} = r_{\pi(i),\pi(j)}$, where $\pi=\pi_g$. 
Since $q^v \ne q^{-v}$, we have that $i < j$ implies $\pi(i) < \pi(j)$.
Therefore $\pi$ is the identity.

(3) It suffices to prove the assertion in the case 
$m = 1$. Let $\bfs=(s_1,\cdots,s_n)$ and $\pi(i) \equiv i + 1$ mod $n$. Then
$$\bfx^{\bfs}_{\pi}=x_{2}^{s_1}\cdots x_{n}^{s_{n-1}} x_1^{s_n}
=q^{\sum_{i=1}^{n-1} s_n s_i} x_1^{s_n}x_{2}^{s_1}\cdots x_{n}^{s_{n-1}}
=q^{\sum_{i=1}^{n-1} s_n s_i} \bfx^{\pi(\bfs)}.$$
For all $\bfs$ and ${\bf t}$,
$$\bfx^{\bfs} \bfx^{\bf t}=q^{\sum_{i<j} s_j t_i} \bfx^{\bfs +{\bf t}}.$$

Let $\alpha({\bf s})=s_n^2$ and define
$$g: {\bf x}^{\bf s} \mapsto q^{\alpha({\bf s})} {\bf x}^{\bf s}_{\pi}$$
for all monomials ${\bf x}^{\bf s}$ in $\Bbbk_q[{\bf x}]$.
Note that $g$ cannot extend to an automorphism 
of $\Bbbk_q[{\bf x}]$. But we show next that $g$ extends
to an automorphism of $\Bbbk_q[{\bf x}]^{(v)}$.

To show this, it suffices to show 
that 
$$g(\bfx^{\bfs}) g(\bfx^{\bf t})=
g(\bfx^{\bfs}\bfx^{\bf t})
(=q^{\sum_{i<j} s_j t_i} g(\bfx^{\bfs +{\bf t}}))$$
for all $\bfx^{\bfs}$ and $\bfx^{\bft}$ in $\Bbbk_q[{\bf x}]^{(v)}$. 
Using the above computation, we have
$$\begin{aligned}
g(\bfx^{\bfs} \bfx^{\bf t})&= q^{\sum_{i<j} s_j t_i} g(\bfx^{\bfs +{\bf t}})\\
&=q^{\alpha(\bfs+\bft)}q^{\sum_{i<j} s_j t_i} q^{\sum_{i=1}^{n-1} (s_n+t_n)(s_i+t_i)}
\bfx^{\pi (\bfs+\bft)},
\end{aligned}
$$
and
$$\begin{aligned}
g(\bfx^{\bfs}) g(\bfx^{\bf t})&=
q^{\alpha(\bfs)+\alpha(\bft)} q^{\sum_{i=1}^{n-1} s_n s_i}
q^{\sum_{i=1}^{n-1} t_n t_i} \bfx^{\pi(\bfs)} \bfx^{\pi(\bft)}\\
&=q^{\alpha(\bfs)+\alpha(\bft)} q^{\sum_{i=1}^{n-1} s_n s_i}
q^{\sum_{i=1}^{n-1} t_n t_i} q^{\sum_{i<j} \pi(\bfs)_j \pi(\bft)_i}
\bfx^{\pi(\bfs+\bft)} .
\end{aligned}
$$

By direct calculation, the difference between the $q$-powers 
in the expressions of $g(\bfx^{\bfs} \bfx^{\bf t})$ and 
$g(\bfx^{\bfs}) g(\bfx^{\bf t})$ is $2 s_n (\sum_{i=1}^n t_i)$.
Since $v$ divides $\sum_{i=1}^n t_i$ and
$q^v=\pm 1$, we have $q^{2 s_n (\sum_{i=1}^n t_i)}=(\pm 1)^2=1$.
Therefore $g(\bfx^{\bfs} \bfx^{\bf t})=g(\bfx^{\bfs}) g(\bfx^{\bf t})$
so $g$ is an algebra automorphism.

(4) For distinct $i, j$, let $y_{i,j} = x_i^{v-1} x_j$. For any distinct 
$i, j, k$, we have
$$y_{i,j} y_{i,k} = r y_{i,k} y_{i,j},$$ 
where
$$r = 
\left\{
\begin{array}{ll}
q^{-1} & \quad {\text{if  }} i < j < k \text{ or } j < k < i \text{ or } k < i < j ,\\
q & \quad {\text{if  }} i < k < j \text{ or } k < j < i \text{ or } j < i < k .\\
\end{array}
\right.
$$
Recall $q \ne q^{-1}$. For any $i, j$, the number of values of $k$ that 
yield $r = q$ is equal to $j - i - 1$ mod $n$. 
Since this is true for all $i\ne j$, we have $\pi_g(j) - \pi_g(i)-1 \equiv
j-i-1$ mod $n$. Therefore $\pi_g(j) - \pi_g(i) \equiv j - i$ 
mod $n$, and the assertion follows by letting $m = \pi_g(n)$. 
\end{proof}

We are now ready to prove Theorems \ref{xxthm0.1} and \ref{xxthm0.2}.

\begin{proof}[Proof of Theorem \ref{xxthm0.1}]
For each $\sigma\in S_n$, let $F_{\sigma}$ be the 
algebra automorphism of $\Bbbk_{-1}[\bfx]$ induced by
sending $x_i$ to $x_{\sigma(i)}$ for all $i$. This 
automorphism restricts to an algebra automorphism 
of $\Bbbk_{-1}[\bfx]^{(v)}$, which is still denoted
by $F_{\sigma}$ -- see Lemma \ref{xxlem6.3}(1). Then 
the subgroup generated by all
$\{F_{\sigma} \mid \sigma\in S_n\}$ is isomorphic to
$S_n$.

Now assume that $n$ and $v$ have different parity
and that $g$ is an algebra automorphism of 
$\Bbbk_{-1}[\bfx]^{(v)}$. Recall that $m=2$.
If $n$ is odd, $\gcd(m,v)=2$
and we can apply Theorem \ref{xxthm4.4}. If $n$ is even,
$\gcd(m,v)=1$, so we can apply Theorem \ref{xxthm5.3}.
In both cases, by Theorem \ref{xxthm4.4} or \ref{xxthm5.3},
the $v$-power discriminant $d^{[v]}_{w}(\Bbbk_{-1}[\bfx]^{(v)}/R)$
is of the form $(x_1^v \cdots x_n^v)^{N}$ for some $N>0$. 
By Lemma \ref{xxlem2.1}(1), this
discriminant is $g$-invariant. This means that 
$g$ satisfies \eqref{E6.0.1}. Let $\pi_g$ be the permutation 
defined in Lemma \ref{xxlem6.1}(4). It is easy to 
see that the map $\phi:g\to F_{\pi_g}$ is a surjective group 
homomorphism from $\Aut(\Bbbk_{-1}[\bfx]^{(v)})$ to $S_n$
with kernel being $\Aut_1(\Bbbk_{-1}[\bfx]^{(v)})$.
By Lemma \ref{xxlem6.2}, we have $\Aut_1(\Bbbk_{-1}[\bfx]^{(v)})
\cong (\Bbbk^{\times})^n$. Therefore
$$\Aut(\Bbbk_{-1}[\bfx]^{(v)})\cong S_n \ltimes
\Aut_1(\Bbbk_{-1}[\bfx]^{(v)})\cong 
S_n \ltimes (\Bbbk^{\times})^n.$$
\end{proof}

The proof of Theorem \ref{xxthm0.2} is similar.

\begin{proof}[Proof of Theorem \ref{xxthm0.2}]
The proofs of (1) and (2) are similar, we only 
provide the proof of (2) here.
 
(2) Under hypotheses (a) or (b), we use 
Theorem \ref{xxthm4.4} or \ref{xxthm5.3} to conclude 
that 
the $v$-power discriminant $d^{[v]}_{w}(\Bbbk_{q}[\bfx]^{(v)}/R)$
is of the form $(x_1^v \cdots x_n^v)^{N}$ for some $N>0$. 
By Lemma \ref{xxlem2.1}(1), this
discriminant is $g$-invariant. This means that 
$g$ satisfies \eqref{E6.0.1}. By Lemma \ref{xxlem6.3}(2), we have
$\pi_g=1$, or equivalently, $g\in 
\Aut_1(\Bbbk_{q}[\bfx]^{(v)})$.
Therefore
$$\Aut(\Bbbk_{q}[\bfx]^{(v)})=
\Aut_1(\Bbbk_{q}[\bfx]^{(v)})\cong 
(\Bbbk^{\times})^n.$$
\end{proof}

\section{Application II: cancellation problem}
\label{xxsec7} 

The second application of the discriminant method is 
the cancellation problem. We need to recall some
definitions and results from \cite{BZ}.

\begin{definition} \cite[Definition 1.1]{BZ}
\label{xxdef7.1}
Let $A$ be an algebra.
\begin{enumerate}
\item[(1)]
We call $A$ {\it cancellative} if 
$A[t] \cong B[t]$ for some algebra $B$ implies that 
$A \cong B$.
\item[(2)]
We call $A$ {\it strongly cancellative} if, for any
$d\geq 1$, $A[t_1,\cdots,t_d]\cong B[t_1,\cdots,t_d]$
for some algebra $B$ implies that $A\cong B$.
\item[(3)]
We call $A$ {\it universally cancellative} if, for any 
$\Bbbk$-flat finitely generated commutative domain 
$R$ such that $R/I=\Bbbk$ for some ideal $I \subset R$ 
and any $\Bbbk$-algebra $B$, any algebra isomorphism 
$A\otimes R\cong B\otimes R$ implies that $A\cong B$.
\end{enumerate}
\end{definition}

The first result is 

\begin{lemma}\cite[Proposition 1.3]{BZ}
\label{xxlem7.2}
Let $\Bbbk$ be a field and $A$ be an algebra with center 
$C(A) = \Bbbk$. Then $A$ is universally cancellative, hence,
strongly cancellative.
\end{lemma}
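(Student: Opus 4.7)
The plan is to verify the ``universally cancellative'' statement directly; the ``strongly cancellative'' conclusion is then immediate since any polynomial ring $\Bbbk[t_1,\dots,t_d]$ satisfies the hypotheses on $R$ with $I=(t_1,\dots,t_d)$. So suppose that $R$ is a $\Bbbk$-flat finitely generated commutative domain with $R/I = \Bbbk$ for some ideal $I\subset R$, and fix a $\Bbbk$-algebra isomorphism $\phi\colon A\otimes R \to B\otimes R$. The goal is to manufacture $A\cong B$ by pushing a suitable ideal across $\phi$ and quotienting.

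The first step is to locate the centers. Because $R$ is $\Bbbk$-flat, one has the standard computation $C(A\otimes R) = C(A)\otimes R$ (expand in a $\Bbbk$-basis of $R$ and collect commutators), and likewise for $B\otimes R$. Since by hypothesis $C(A)=\Bbbk$, this gives $C(A\otimes R)=R$ and $C(B\otimes R)=C(B)\otimes R$, and $\phi$ restricts to a $\Bbbk$-algebra isomorphism $R \cong C(B)\otimes R$. The heart of the argument, and what I expect to be the main obstacle, is to upgrade this to the equality $C(B)=\Bbbk$. First, $C(B)$ embeds into the domain $C(B)\otimes R\cong R$, so $C(B)$ is itself a commutative domain containing $\Bbbk$. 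Additivity of transcendence degree under tensor products over a field yields $\operatorname{trdeg}_\Bbbk C(B)=0$, so $C(B)$ is algebraic over $\Bbbk$; a commutative algebraic domain over a field is itself a field. Finally, the composition
\[
C(B)\hookrightarrow C(B)\otimes R \xrightarrow{\;\phi^{-1}\;} R \twoheadrightarrow R/I = \Bbbk
\]
is a $\Bbbk$-algebra map out of a field, hence injective, and is the identity on $\Bbbk\subset C(B)$, forcing $C(B)=\Bbbk$.

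With $C(B)=\Bbbk$ in hand, the centers of both $A\otimes R$ and $B\otimes R$ are $R$, and $\phi$ restricts to a $\Bbbk$-algebra automorphism $\sigma\colon R\to R$. Since $I\subset R$ is central in $A\otimes R$, the two-sided ideal it generates is $A\otimes I$, so
\[
\phi(A\otimes I) \;=\; (B\otimes R)\,\sigma(I) \;=\; B\otimes \sigma(I).
\]
Because $\sigma$ is a $\Bbbk$-algebra automorphism, $R/\sigma(I)\cong R/I = \Bbbk$. Quotienting both sides of $\phi$ by this ideal then yields
\[
A \;=\; (A\otimes R)/(A\otimes I) \;\xrightarrow{\;\sim\;}\; (B\otimes R)/(B\otimes \sigma(I)) \;=\; B\otimes (R/\sigma(I)) \;=\; B,
\]
which is the desired isomorphism $A\cong B$.
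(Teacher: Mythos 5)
Your proof is correct. The paper itself gives no argument for this lemma---it is quoted verbatim from \cite[Proposition 1.3]{BZ}---and your reasoning (computing $C(A\otimes R)=C(A)\otimes R$ by flatness, forcing $C(B)=\Bbbk$ via the transcendence-degree count and the evaluation map $R\twoheadrightarrow R/I=\Bbbk$, then quotienting by the central ideal $A\otimes I$ and its image $B\otimes\sigma(I)$) is essentially the argument given in that reference, so there is nothing to correct or to contrast.
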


We only need the following definition 
for connected graded domains.

\begin{definition}\cite[Definition 2.1(2)]{CPWZ1}
\label{xxdef7.3}
Let $A$ be a connected graded domain generated 
by $A_1=\bigoplus_{i=1}^r \Bbbk x_i$. An element $f \in A$ 
is called dominating if, for every testing 
${\mathbb N}$-filtered PI algebra $T$ with $\gr_F T$ being a
connected graded domain, and for every testing subset 
$\{y_1,\cdots, y_r\}\subseteq T$
that is linearly independent in the quotient $\Bbbk$-module 
$T/F_0T$, there is a
presentation of $f$ of the form $f(x_1,\cdots,x_r)$ in the 
free algebra $\Bbbk\langle x_1,\cdots, x_r\rangle$
such that the following hold: either $f(y_1, \cdots, y_r)=0$, or
\begin{enumerate}
\item[(a)] 
$\deg f(y_1,\cdots,y_r)\geq \deg f$, and
\item[(b)] 
$\deg f(y_1,\cdots, y_r)> \deg f$, further, 
$\deg y_{i_0} > 1$ for some $i_0$.
\end{enumerate}
\end{definition}

\begin{lemma}\cite[Theorem 4.6]{BZ}
\label{xxlem7.4}
Let $A$ be a connected graded PI domain generated in degree 1,
of finite Gelfand-Kirillov dimension. Suppose that the discriminant 
power $(d^{[p]}_w(A/C))^a$ is dominating for some $p,w$ and $a$. 
Then $A$ is strongly cancellative.
\end{lemma}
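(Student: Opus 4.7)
The plan is to deploy the discriminant-based cancellation strategy of \cite{BZ}: given any algebra isomorphism $\phi\colon A[t_1,\ldots,t_d] \to B[t_1,\ldots,t_d]$ with $d \geq 1$, I would show that $\phi(A) \subseteq B$, which by symmetry forces $\phi(A) = B$ and hence $A \cong B$. Write $\bar t = (t_1,\ldots,t_d)$.

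First, equip both $A[\bar t]$ and $B[\bar t]$ with the $\N$-filtration $F$ in which the base algebra lies in $F_0$ and each $t_i$ has degree $1$. The associated graded rings are again $A[\bar t]$ and $B[\bar t]$, both connected graded PI domains of the same finite Gelfand-Kirillov dimension. By Lemma \ref{xxlem2.2} the discriminant is preserved under polynomial extension, so writing $C_A = Z(A)$ and $C_B = Z(B)$ we have $d_w^{[p]}(A[\bar t]/C_A[\bar t]) =_{(A[\bar t])^\times} d_w^{[p]}(A/C_A)$, and similarly for $B$. Since any algebra isomorphism preserves centers and regular traces, Lemma \ref{xxlem2.1}(2) yields
\[
\phi\bigl((d_w^{[p]}(A/C_A))^a\bigr) =_{(B[\bar t])^\times} (d_w^{[p]}(B/C_B))^a,
\]
and this identity is compatible with passing to the associated graded.

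Now invoke the dominating hypothesis. Fix degree-one generators $x_1,\ldots,x_r$ of $A$, and let $y_i \in T := \gr_F(B[\bar t]) \cong B[\bar t]$ denote the principal symbol of $\phi(x_i)$; since $\phi$ is an isomorphism, $\{y_1,\ldots,y_r\}$ is linearly independent in $T/F_0 T$. Setting $f := (d_w^{[p]}(A/C_A))^a$ and choosing a presentation $f = f(x_1,\ldots,x_r)$ in the free algebra as in Definition \ref{xxdef7.3}, the principal symbol of $\phi(f)$ equals a unit multiple of $f_B := (d_w^{[p]}(B/C_B))^a$, a nonzero element of $F_0 T$. On the other hand this symbol is also $f(y_1,\ldots,y_r)$. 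So $f(y_1,\ldots,y_r)$ is nonzero of $F$-degree $0$. Part (a) of Definition \ref{xxdef7.3} rules out every $y_i$ having positive degree, and part (b) rules out any $y_{i_0}$ having degree strictly greater than $1$. The only remaining possibility is that each $y_i$ lies in $F_0 T = B$, so that $\phi(x_i) \in B$ for every $i$; hence $\phi(A) \subseteq B$. A symmetric argument applied to $\phi^{-1}$ gives $\phi(A) = B$ and therefore $A \cong B$.

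The technical heart I expect to spend effort on is the passage to the associated graded in Step 2: one must verify that $\{y_1,\ldots,y_r\}$ is a legitimate testing subset of the $\N$-filtered PI algebra $B[\bar t]$ in the precise sense of Definition \ref{xxdef7.3}, and that the graded identity really does produce $f(y_1,\ldots,y_r)$ in $F_0 T$ with matching degree, for every admissible $d$. This filtered-to-graded bookkeeping (accommodating all $d \geq 1$ simultaneously, which is what \emph{strong} cancellation requires) is the content of \cite[Theorem 4.6]{BZ}.
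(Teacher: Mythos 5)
Your proposal attempts a direct reconstruction of \cite[Theorem 4.6]{BZ}, but the way you invoke the dominating property does not work. With your chosen filtration ($B$ sitting in $F_0$ and each $t_i$ of degree $1$), Definition \ref{xxdef7.3} only applies to testing subsets $\{y_1,\dots,y_r\}$ that are linearly independent in $T/F_0T$; yet the conclusion you are after is precisely that every $\phi(x_i)$ lies in $F_0T=B$, in which case all the $y_i$ vanish in $T/F_0T$ and the hypothesis of the definition fails. Your claim that linear independence ``follows since $\phi$ is an isomorphism'' is therefore circular, and false exactly when the theorem is true. Moreover, conditions (a) and (b) of Definition \ref{xxdef7.3} are conclusions about $\deg f(y_1,\dots,y_r)$, not exclusion principles on the individual $y_i$: from ``$f(y)\neq 0$ and $\deg f(y)=0<\deg f$'' the correct deduction is only that the hypotheses of the definition must fail, i.e.\ that \emph{some} nontrivial linear combination of the $y_i$ lies in $F_0T$ --- far short of $\phi(x_i)\in B$ for every $i$. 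Finally, your ``symmetric argument applied to $\phi^{-1}$'' would require the discriminant of $B$ to be dominating, which is not among the hypotheses (nothing at all is assumed about $B$ beyond $A[\bar t]\cong B[\bar t]$); \cite{BZ} closes this last step with a Gelfand--Kirillov dimension argument instead.

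For comparison, the paper does not reprove the statement: it observes that the only place the specific form of the discriminant enters Bell--Zhang's argument is its invariance under adjoining central polynomial variables (\cite[Lemma 4.5(2)]{BZ}), which Lemma \ref{xxlem2.2} supplies for the $p$-power discriminants, and otherwise defers to \cite{BZ}. The engine there is not a principal-symbol computation on $\phi$ but the Makar-Limanov/LND-rigidity machinery: a dominating discriminant power forces every locally nilpotent (Hasse--Schmidt) derivation of $A[t_1,\dots,t_d]$ to vanish on $A$, and then the derivations $\partial/\partial t_j$ of $B[t_1,\dots,t_d]$, pulled back through $\phi$, yield $\phi(A)\subseteq B$, with equality obtained from GK-dimension. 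If you want a self-contained proof rather than a citation, that is the route to follow.
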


\begin{proof} The original \cite[Theorem 4.6]{BZ} was proved
for discriminant $d_w(A/C)$. But the proof works for this
more general setting when \cite[Lemma 4.5(2)]{BZ}
is replaced by Lemma \ref{xxlem2.2}. So we are not going to 
repeat the rest of the proof.
\end{proof}

The following lemma is easy.

\begin{lemma}
\label{xxlem7.5} 
Let $A$ be the algebra $\Bbbk_q[\bfx]^{(v)}$ for
some $n,q,v$. Let $f$ be an element of the form
$(x_1\cdots x_n)^N$ for some $N>0$. Then there is an integer 
$a>0$ such that $f^{ab}$ is dominating for 
all integer $b>0$.
\end{lemma}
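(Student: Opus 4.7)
Take $a = v$. Since $f \in A = \Bbbk_q[\bfx]^{(v)}$ forces $v \mid nN$, for every $b \geq 1$ the element $f^{ab} = (x_1 \cdots x_n)^{vbN}$ lies in $A$, and has $A$-degree $k := nbN$ when $A$ is viewed as connected graded with $A_1$ spanned by the degree-$v$ monomials $\{e_\bfs = \bfx^\bfs : |\bfs| = v\}$. The plan is to show that any prescribed generator $e_{\bfs^*}$ can be forced to appear in a presentation of $f^{ab}$ as a word of length $k$ in these generators, and then to invoke the domain property of $\gr_F T$ to convert a filtration-degree inequality into the needed strict inequality.

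Using $x_j x_i = q x_i x_j$ to put $f^{ab}$ into standard form, we have $f^{ab} =_{\Bbbk^{\times}} x_1^{vbN} \cdots x_n^{vbN}$. Given any generator $e_{\bfs^*}$ with $|\bfs^*| = v$, each exponent satisfies $s^*_j \leq v \leq vbN$, so $e_{\bfs^*}$ may be extracted as a leftmost factor; the remainder is a standard-form monomial of total degree $v(k-1)$ in $\N^n$, which can be greedily partitioned (up to $\Bbbk^{\times}$) into $k-1$ further degree-$v$ factors. This produces a word $e_{i_1} \cdots e_{i_k}$ presenting $f^{ab}$ in which $e_{\bfs^*}$ appears at least once.

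Now fix a testing $\N$-filtered PI algebra $T$ with $\gr_F T$ a connected graded domain, and a subset $\{y_1, \dots, y_r\} \subseteq T$ linearly independent modulo $F_0 T$. Each $y_i$ lies in $F_{d_i} T \setminus F_{d_i - 1} T$ with $d_i \geq 1$ and has nonzero image $\bar y_i \in \gr_F T$. For any presentation $f^{ab} = e_{i_1} \cdots e_{i_k}$ from the previous paragraph, the substituted product $y_{i_1} \cdots y_{i_k}$ reduces in $\gr_F T$ to $\bar y_{i_1} \cdots \bar y_{i_k}$, which is nonzero by the domain property and has $\gr_F T$-degree $\sum_j d_{i_j} \geq k = \deg f^{ab}$, with equality iff every $d_{i_j} = 1$. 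This already gives condition (a) of Definition \ref{xxdef7.3}. If some $d_{i_0} > 1$, we choose a presentation including $e_{i_0}$ by the construction of paragraph two; then $\sum_j d_{i_j} > k$ strictly, giving condition (b). Hence $f^{ab}$ is dominating for every $b \geq 1$.

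The main (albeit minor) obstacle is the monomial-decomposition bookkeeping: to ensure that \emph{any} $A$-generator can be inserted into a factorization of $f^{ab}$ we need $abN \geq v$, which is what forces $a \geq v$; the choice $a = v$ comfortably suffices for all $b \geq 1$. The substantive input is that since $\gr_F T$ is a domain, a product of nonzero homogeneous elements is nonzero of the expected total degree, which promotes the trivial lower bound $\sum_j d_{i_j} \geq k$ to an equality and thereby isolates exactly the strict-inequality case when some $y_{i_0}$ has filtration degree greater than one.
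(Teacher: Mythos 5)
Your proof is correct, but it takes a genuinely different route from the paper's. The paper's argument is a two-line reduction: it chooses $a$ so that $f^{a}$ is, up to a scalar, the product $P$ of \emph{all} degree-$v$ monomials --- that is, of all the degree-one generators of the regraded algebra --- so that $f^{ab}=_{\Bbbk^{\times}}P^{bN}$, and then it simply quotes \cite[Lemma 2.2(1)]{CPWZ1}, which asserts that any positive power of the product of all the degree-one generators is dominating. You instead take the smaller, explicit exponent $a=v$ and verify the dominating property of Definition \ref{xxdef7.3} directly: you show that any prescribed degree-$v$ generator can be inserted into a length-$k$ factorization of $(x_1\cdots x_n)^{vbN}$ (the greedy peeling argument, which is valid since every exponent $vbN$ dominates every $s_j^{*}\le v$), and you then use the domain property of $\gr_F T$ to conclude that the substituted word has filtration degree exactly $\sum_j d_{i_j}\ge k$, with strict inequality once the word is arranged to contain a generator whose substitute has filtration degree greater than one; since the definition allows the presentation to depend on the testing data, this choice is legitimate. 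In effect you have unfolded the proof of the cited lemma and adapted it to the fact that your $f^{ab}$ is not literally a power of the product of all generators. What your version buys is self-containedness and a cleaner value of $a$; what the paper's version buys is brevity by outsourcing the combinatorics to \cite{CPWZ1}. Both arguments are valid.
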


\begin{proof}
Let $\Phi:=\{x_1^{d_1}\cdots x_n^{d_n}\mid d_s\geq 0, \sum_{s=1}^n d_s=v\}$
be the set of monomials of degree $v$, which is a $\Bbbk$-basis 
of the degree 1 component of $A$ after regrading. Let $P$ be the 
product of elements in $\Phi$. Then $P=_{\Bbbk^{\times}} (x_1\cdots x_n)^a$
for some $a>0$. Then $f^{ab}=_{\Bbbk^{\times}} P^{bN}$. It 
suffices to show that $P^{bN}$ is dominating. But
this is \cite[Lemma 2.2(1)]{CPWZ1}.
\end{proof}

Now we are ready to prove Theorem \ref{xxthm0.4}. In fact
we prove that the algebras are strongly cancellative.

\begin{theorem}
\label{xxthm7.6} 
Let $A$ be $\Bbbk_{q}[x_1,\cdots,x_n]^{(v)}$ where $v$ is a 
positive integer and let $m\geq 2$ be the order of $q$.
Suppose that one of the following is true.
\begin{enumerate}
\item[(a)] 
$n$ is even and $m$ does not divide $v$.
\item[(b)]
$n$ is odd and $\gcd(m,v)\neq 1$. 
\end{enumerate}
Then $A$ is strongly cancellative.
\end{theorem}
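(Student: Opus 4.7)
The plan is to apply Lemma \ref{xxlem7.4} directly to $A = \Bbbk_q[\bfx]^{(v)}$. First I would verify the background hypotheses of that lemma. After regrading so that the component $A_{vk}$ of the natural grading becomes the new degree-$k$ part, $A$ is a connected graded $\Bbbk$-algebra of finite GK-dimension $n$, a domain (as a subalgebra of $\Bbbk_q[\bfx]$), and satisfies a polynomial identity (because $q$ is a root of unity). It is generated in its new degree $1$: any monomial $\bfx^{\bfs}$ with $|\bfs|=vk$ can be split coordinate-wise into a sum of $k$ nonnegative vectors of weight $v$ each, so $\bfx^{\bfs}$ is a scalar multiple of a product of $k$ monomials of total degree $v$.

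Next I would produce an explicit discriminant power of $A$ over its center $R := Z(A)$ that is a nontrivial scalar multiple of a power of $x_1 x_2 \cdots x_n$. Under either hypothesis (a) or (b), since $q$ has order $m$ in $\Bbbk^{\times}$, the integer $m$ is invertible in $\Bbbk$, so Theorems \ref{xxthm4.4} and \ref{xxthm5.3} apply with any $p$ for which the relevant divisibility condition holds. Choose $p = v$ (so the divisibility is automatic). In case (b), $n$ is odd and $g := \gcd(m,v) \geq 2$; Theorem \ref{xxthm4.4} gives
\[
d_w^{[v]}(A/R) =_{\Bbbk^{\times}} (x_1 x_2 \cdots x_n)^{wv(g-1)},
\]
and the exponent is strictly positive because $g \geq 2$. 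In case (a), $n$ is even and $m \nmid v$, so $g < m$ and $m/g \geq 2$; Theorem \ref{xxthm5.3} gives
\[
d_w^{[v]}(A/R) =_{\Bbbk^{\times}} (x_1 x_2 \cdots x_n)^{wv(m/g-1)},
\]
with positive exponent. So in both cases $d_w^{[v]}(A/R) =_{\Bbbk^{\times}} (x_1 \cdots x_n)^{N}$ for some $N \geq 1$.

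Finally, I would apply Lemma \ref{xxlem7.5} to the element $f := (x_1 \cdots x_n)^{N}$, obtaining an integer $a > 0$ such that $f^{a}$ is dominating. Since $f^{a} =_{\Bbbk^{\times}} \bigl(d_w^{[v]}(A/R)\bigr)^{a}$, this exhibits a dominating power of the discriminant in the sense required by Lemma \ref{xxlem7.4}, which therefore yields that $A$ is strongly cancellative; in particular it is cancellative, proving Theorem \ref{xxthm0.4}.

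There is essentially no obstacle once the machinery of Sections 4, 5 and 7 is in place: the only delicate point is the strict positivity of the exponent $N$, which is precisely where hypotheses (a) and (b) are used ($g \neq 1$ in the odd case, $g \neq m$ in the even case). If either hypothesis fails, the computed discriminant becomes a unit and the method collapses, consistent with Question \ref{xxque0.5}.
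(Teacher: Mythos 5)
Your proposal is correct and follows essentially the same route as the paper: invoke Theorems \ref{xxthm4.4} and \ref{xxthm5.3} to identify a discriminant power $d_w^{[p]}(A/R)$ of the form $(x_1\cdots x_n)^N$ with $N>0$, then combine Lemma \ref{xxlem7.5} with Lemma \ref{xxlem7.4}. Your added details — choosing $p=v$ to make the divisibility hypothesis automatic, and checking that $g\geq 2$ (resp.\ $m/g\geq 2$) forces $N>0$ — are exactly the points the paper leaves implicit, and they are handled correctly.
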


\begin{proof} Under the hypotheses (a) or (b), by 
Theorems \ref{xxthm4.4} and \ref{xxthm5.3},
there is some $p$ and $w$ such that 
$d^{[p]}_w(A/C)$ is of the form $(x_1\cdots x_n)^N$
for some $N>0$. By Lemma \ref{xxlem7.5}, the element
$(d^{[p]}_w(A/C))^{ab}$ is dominating for 
some $a>0$ and all $b>0$. The assertion follows from 
Lemma \ref{xxlem7.4}.
\end{proof}

We make some comments and remarks for the rest of this section.

\begin{lemma}
\label{xxlem7.7}
Let $\{A^1,\cdots,A^s\}$ be a set of algebras as in Theorem 
{\rm{\ref{xxthm7.6}(a,b)}} with possible repetition. Let $A$ be the 
tensor product $A^1\otimes \cdots \otimes A^s$. Then 
some $p$-power discriminant of $A$ over its center is dominating.
\end{lemma}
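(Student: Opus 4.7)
The plan is to combine the discriminant formula from Theorems \ref{xxthm4.4} and \ref{xxthm5.3} with the tensor-product computation in Lemma \ref{xxlem2.6}, and then deduce the dominating property by the same monomial argument as in Lemma \ref{xxlem7.5}.

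First I would fix, for each factor $A^i = \Bbbk_{q_i}[\bfx^{(i)}]^{(v_i)}$, a positive integer $p_i$ large enough so that $v_i \mid w_i p_i (g_i - 1)$ (odd $n_i$ case) or $v_i \mid w_i p_i (m_i/g_i - 1)$ (even $n_i$ case). Then Theorems \ref{xxthm4.4} and \ref{xxthm5.3} yield that each $d^{[p_i]}_{w_i}(A^i/R^i)$ is stable and equals a positive power of $x_1^{(i)} \cdots x_{n_i}^{(i)}$ up to a scalar in $\Bbbk^{\times}$. Using stability, I would replace each $p_i$ by a common multiple $p$, so that uniformly $d^{[p]}_{w_i}(A^i/R^i) =_{\Bbbk^{\times}} (x_1^{(i)} \cdots x_{n_i}^{(i)})^{N_i}$ for some $N_i > 0$.

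Next I would apply Lemma \ref{xxlem2.6} inductively on $s$. The hypothesis that $R = R^1 \otimes \cdots \otimes R^s$ is a domain follows since each $R^i$ is a commutative domain over the field $\Bbbk$ and a quasi-basis for each $A^i$ over $R^i$ was built in Section 3. The crucial hereditary hypothesis of Lemma \ref{xxlem2.6}(3) is exactly supplied by Lemma \ref{xxlem3.3}, because the sets ${\mathcal D}^c(T^i/\bfb^i)$ consist of monomials in the variables of the $i$-th factor. Combined with the stability already established, Lemma \ref{xxlem2.6}(3) gives a stable $p$-power discriminant for $A = A^1 \otimes \cdots \otimes A^s$ of the shape
$$d^{[p]}_{w}(A/R) =_{\Bbbk^{\times}} \prod_{i=1}^{s} \bigl(x_1^{(i)} \cdots x_{n_i}^{(i)}\bigr)^{M_i}$$
for some integers $M_i > 0$, where $w = \prod_i w_i$.

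Finally, to obtain the dominating property, I would mimic Lemma \ref{xxlem7.5} in the tensor-product setting: the algebra $A$ is connected graded (after regrading by the appropriate Veronese degree) with degree-$1$ component spanned by the finite set $\Phi$ of products $\bigotimes_i m^{(i)}$ where $m^{(i)}$ ranges over the $v_i$-degree monomials of $A^i$. The product $P$ of all elements of $\Phi$ is, up to a scalar, a positive power of $\prod_{i,j} x_j^{(i)}$, hence some positive power of the displayed $d^{[p]}_w(A/R)$ is a positive power of $P$. Applying \cite[Lemma 2.2(1)]{CPWZ1} to $P$ then shows that this power is dominating, and raising $p$ (using stability of $d^{[p]}_w(A/R)$) absorbs the extra exponent, yielding a $p$-power discriminant of $A$ over its center that is dominating. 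The main obstacle is verifying that the hereditary hypothesis in Lemma \ref{xxlem2.6}(3) really applies to the sets of products of monomial discriminants occurring at each step of the induction; this is where Lemma \ref{xxlem3.3}, applied iteratively to the multigraded tensor factors, is indispensable.
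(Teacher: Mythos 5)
Your proposal is correct and follows essentially the same route as the paper: Theorems \ref{xxthm4.4} and \ref{xxthm5.3} give each factor's discriminant as a power of $x_1\cdots x_{n_i}$, Lemma \ref{xxlem2.6}(3) with the hereditary property from Lemma \ref{xxlem3.3} assembles these by induction on $s$, and the dominating property is obtained by the monomial argument of Lemma \ref{xxlem7.5}. Your added care in using stability to pass to a common exponent $p$ and to absorb the extra power $ab$ into a genuine $p$-power discriminant fills in details the paper leaves implicit.
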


\begin{proof} 
Each algebra $A^i$ has some $p$-power discriminant 
(over its center) that is dominating by Theorems 
\ref{xxthm4.4} and \ref{xxthm5.3}. The assertion follows
from Lemma \ref{xxlem2.6}(3)  together with induction.
Some of the hypotheses in Lemma \ref{xxlem2.6} can be
verified by using Lemma \ref{xxlem3.3}.
\end{proof}

\begin{remark}
\label{xxrem7.8} 
Let $A$ be as in Lemma {\rm{\ref{xxlem7.7}}}.
\begin{enumerate}
\item[(1)]
By using the discriminant method \cite{CPWZ1, CPWZ2}, we 
obtain that every automorphism of $A$
is graded. Therefore it is a linear algebra problem to determine
the full automorphism group of $A$. In many case (when 
the Gelfand-Kirillov dimension of $A$ is small), one can 
explicitly work out the full automorphism group of $A$.
\item[(2)]
By Lemma \ref{xxlem7.4} and \ref{xxlem7.7}, $A$ is strongly 
cancellative.
\end{enumerate}
\end{remark}

\section{Tits alternative}
\label{xxsec8}

Recall that, in the last few sections, we are only 
considering the case when $q\neq 1$, which implies that
\begin{equation}
\label{E8.0.1}\tag{E8.0.1}
\Bbbk\neq {\mathbb Z}/(2).
\end{equation}
In this section, if $q=1$, we will further assume
that $\Bbbk\neq {\mathbb Z}/(2)$. Note that \eqref{E8.0.1}
is one of the hypotheses in \cite[Proposition 2.5]{CPWZ3}.

Firstly we consider the case when $n=2s+1$ is odd and 
$g:=\gcd(m,v)=1$ where $m\geq 2$ is the order of $q$.
Since $\gcd(m,v)=1$, there are two positive integers 
$\alpha$ and $\beta$ such that
\begin{equation}
\label{E8.0.2}\tag{E8.0.2}
(\alpha +s)m-\beta v=1.
\end{equation}

\begin{lemma}
\label{xxlem8.1} Retain the above hypotheses. 
\begin{enumerate}
\item[(1)]
The following are locally nilpotent derivations of $\Bbbk_{q}[\bfx]$
of degree $\beta v$.
\begin{enumerate}
\item[(a)]
$$\partial_1: x_i\longrightarrow 
\begin{cases} x_2^{\alpha m} (x_2x_3^{m-1}x_4 x_5^{m-1}\cdots x_{2s}
x_{2s+1}^{m-1}) & i=1,\\
0& i\neq 1.\end{cases}$$
\item[(b)]
$$\partial_3: x_i\longrightarrow 
\begin{cases} x_2^{\alpha m} (x_1^{m-1}x_2x_4 x_5^{m-1}\cdots x_{2s}
x_{2s+1}^{m-1}) & i=3,\\
0& i\neq 3.\end{cases}$$
\end{enumerate}
\item[(2)]
Let $g_1=\exp(\partial_1)$ and $g_3=\exp(\partial_3)$.
Then $g_1$ and $g_3$ are two automorphisms of $\Bbbk_{q}[\bfx]$
that generate a free subgroup of $\Aut(\Bbbk_{q}[\bfx])$.
\item[(3)]
Both $g_1$ and $g_2$ send a homogeneous element $f$ of
degree $h$ to a linear combination of homogeneous
elements of degrees $h+\beta v {\mathbb N}$. As a consequence,
both $g_1$ and $g_2$ restrict to algebra automorphisms of 
$\Bbbk_{q}[\bfx]^{(v)}$.
\end{enumerate}
\end{lemma}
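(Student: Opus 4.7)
The plan is to dispose of parts (1) and (3) by direct calculation and to attack the free-group assertion in (2) via an explicit ping-pong argument.

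For part (1), I first extend $\partial_1$ by the Leibniz rule and verify that the defining relations $x_j x_i = q x_i x_j$ are preserved. Since $\partial_1$ annihilates every $x_i$ with $i\ne 1$, the only substantive check is that $x_j\,\partial_1(x_1) = q\,\partial_1(x_1)\,x_j$ for each $j>1$. Writing $u:=\partial_1(x_1) = \mathbf{x}^{\mathbf{s}}$ and using the $q$-commutation identity $x_j \mathbf{x}^{\mathbf{s}} = q^{\sum_{i<j} s_i - \sum_{i>j} s_i}\, \mathbf{x}^{\mathbf{s}} x_j$, this reduces to the congruence $\sum_{i<j}s_i - \sum_{i>j} s_i \equiv 1 \pmod m$, which I verify case-by-case (for $j$ even and $j$ odd) from the explicit exponent vector of $u$. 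For local nilpotence, the pivotal observation is that $u$ commutes with $x_1$: the total degree of $u$ equals $(\alpha+s)m \equiv 0 \pmod m$, so pushing $x_1$ past $u$ contributes $q^{(\alpha+s)m}=1$. Combined with $\partial_1(u)=0$, an easy induction yields
$$
\partial_1^k(x_1^a\,\mathbf{x}^{\mathbf{r}}) \;=\; \frac{a!}{(a-k)!}\, x_1^{a-k}u^k\,\mathbf{x}^{\mathbf{r}} \qquad (r_1=0),
$$
which vanishes for $k>a$. The degree identity $\deg\partial_1 = \deg u-1 = (\alpha+s)m-1 = \beta v$ follows immediately from \eqref{E8.0.2}. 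The analysis for $\partial_3$ is symmetric, with the role of $x_1$ replaced by $x_3$.

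For part (2), the commuting of $u$ with $x_1$ gives the closed form $g_1(x_1^a\mathbf{x}^{\mathbf{r}}) = (x_1+u)^a \mathbf{x}^{\mathbf{r}}$, valid in any characteristic; in particular $g_1$ is the algebra endomorphism $x_1\mapsto x_1+u$, $x_i\mapsto x_i$ ($i\ne 1$), with inverse $x_1\mapsto x_1-u$, $x_i\mapsto x_i$, so $g_1\in\Aut(\Bbbk_q[\bfx])$; similarly for $g_3$. To prove that $\langle g_1,g_3\rangle$ is free of rank $2$, I follow the strategy of \cite[Theorem 2]{CPWZ3}, where the $q=-1$ case is treated using precisely this kind of pair of derivations: one constructs a homomorphism from $\langle g_1,g_3\rangle$ onto a free subgroup of $\mathrm{SL}_2$ of a suitable commutative ring, sending $g_1$ and $g_3$ to Sanov-type unipotent matrices $\bigl(\begin{smallmatrix} 1 & a \\ 0 & 1\end{smallmatrix}\bigr)$ and $\bigl(\begin{smallmatrix} 1 & 0 \\ b & 1\end{smallmatrix}\bigr)$, and then invokes the classical ping-pong lemma. \emph{This is the main obstacle}: the structural features used for $q=-1$ --- namely that each of $u,u'$ commutes with the variable it modifies, and involves $x_3^{m-1}$ (resp.\ $x_1^{m-1}$) to the appropriate power --- carry over, but a clean target representation must be selected and the ping-pong configuration verified by explicit computation.

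For part (3), grade $\Bbbk_q[\bfx]$ by total degree. A basis monomial $f = x_1^a\mathbf{x}^{\mathbf{r}}$ with $r_1=0$ and $\deg f = h$ is mapped by $g_1$ to
$$
g_1(f) \;=\; (x_1+u)^a\mathbf{x}^{\mathbf{r}} \;=\; \sum_{k=0}^{a}\binom{a}{k}\, x_1^{a-k} u^k\mathbf{x}^{\mathbf{r}},
$$
whose $k$th summand is homogeneous of degree $h + k\beta v$. If $v\mid h$, then $v$ divides every such degree, so $g_1(f)\in\Bbbk_q[\bfx]^{(v)}$; the same argument applies to $g_1^{-1}$ and symmetrically to $g_3,g_3^{-1}$. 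Therefore $g_1$ and $g_3$ restrict to algebra automorphisms of $\Bbbk_q[\bfx]^{(v)}$.
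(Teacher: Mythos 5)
Your treatments of parts (1) and (3) are correct and in fact more explicit than the paper's: the commutation check $x_j u = q\,u x_j$ via the congruence $\sum_{i<j}s_i-\sum_{i>j}s_i\equiv 1 \pmod m$ (using that $\deg u=(\alpha+s)m\equiv 0$), the observation that $u$ commutes with $x_1$, the closed form $g_1(x_1)=x_1+u$ with inverse $x_1\mapsto x_1-u$, and the degree count $\deg\partial_1=(\alpha+s)m-1=\beta v$ all check out, and the paper's own proof of (1) and (3) is essentially this argument stated more tersely.

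The genuine gap is part (2), and you have flagged it yourself: the assertion that $\langle g_1,g_3\rangle$ is free of rank $2$ is the entire content of that part (and is what ultimately drives Theorem \ref{xxthm8.3}(2)), yet your proposal only names a strategy --- map to Sanov-type unipotent matrices in $\mathrm{SL}_2$ of some commutative ring and ping-pong --- without producing the representation or verifying any ping-pong configuration. As written, nothing in your argument rules out a relation between $g_1$ and $g_3$. The paper does not build an $\mathrm{SL}_2$ representation at all; it closes this step by putting the pair into the normal form required by \cite[Proposition 2.5]{CPWZ3}. Concretely, one factors $x_3$ out of $\partial_1(x_1)$ and $x_1$ out of $\partial_3(x_3)$, so that $g_1^d$ fixes $x_i$ for $i\neq 1$ and sends $x_1\mapsto x_1+d\,x_3 a_1$ with $a_1=x_2^{\alpha m}(x_2x_3^{m-2}x_4x_5^{m-1}\cdots x_{2s}x_{2s+1}^{m-1})$, and symmetrically $g_3^d$ sends $x_3\mapsto x_3+d\,x_1 b_1$; one then takes $R=\Bbbk\langle x_2,x_4,x_5,\dots,x_n\rangle$ and checks the two hypotheses of that proposition, namely that $a_1b_1$ is transcendental over $\Bbbk$ and that $R+Rx_1+Rx_3$ is a free $R$-module of rank $3$. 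If you want a self-contained proof along your lines you would have to carry out the analogous verification yourself; citing the shape of the $q=-1$ argument in \cite{CPWZ3} without checking that its hypotheses hold for these particular $\partial_1,\partial_3$ does not suffice.
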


\begin{proof} (1) (a)
The degree of $\partial_1$ is $\alpha m+ sm-1=\beta v$. To check
that $\partial_1$ is a derivation we just verify that
$\partial_1(x_j x_i-q x_i x_j)=0$ for all $1\leq i<j\leq n$,
which is straightforward by the choice of $\partial_1(x_i)$.
It is clear that $\partial^2_1(x_i)=0$. Then $\partial^2_1$
is locally nilpotent. The proof of (1)(b) is similar.

(2) By definition, for any power $d$, we have 
$$\begin{aligned}
g_1^d(x_i)&=\begin{cases} x_1+d\; x_2^{\alpha m} (x_2x_3^{m-1}x_4 x_5^{m-1}\cdots x_{2s}
x_{2s+1}^{m-1}) & i=1,\\
x_i& i\neq 1,\end{cases}\\
&=\begin{cases} x_1+d\; x_3 [x_2^{\alpha m} (x_2x_3^{m-2}x_4 x_5^{m-1}\cdots x_{2s}
x_{2s+1}^{m-1})] & i=1,\\
x_i& i\neq 1,\end{cases}
\end{aligned}
$$
and
$$\begin{aligned}
g_3^d(x_i)&=\begin{cases} x_3+d\; x_2^{\alpha m} (x_1^{m-1}x_2x_4 x_5^{m-1}\cdots x_{2s}
x_{2s+1}^{m-1}) & i=1,\\
x_i& i\neq 1,\end{cases}\\
&=\begin{cases} x_1+d \; x_1 [x_2^{\alpha m} (x_1^{m-2}x_2x_4 x_5^{m-1}\cdots x_{2s}
x_{2s+1}^{m-1})] & i=1,\\
x_i& i\neq 1.\end{cases}
\end{aligned}
$$
Let $R$ be the subalgebra of $\Bbbk_{q}[\bfx]$ generated by $x_2,x_4,x_5,\cdots,x_{n}$.
Then $g_1$ satisfies \cite[(E2.1.1)]{CPWZ3} with $a_0=0$ and
$a_1=x_2^{\alpha m} (x_2x_3^{m-2}x_4 x_5^{m-1}\cdots x_{2s}
x_{2s+1}^{m-1})$ and $g_3$ satisfies
\cite[(E2.1.2)]{CPWZ3} (when $x_2$ is replaced by $x_3$)
with $b_0=0$ and $b_1=x_2^{\alpha m} (x_1^{m-2}x_2x_4 x_5^{m-1}\cdots x_{2s}
x_{2s+1}^{m-1})$. It is clear that $a_1b_1$ is 
transcendental over $\Bbbk$. Also $R+Rx_1+Rx_3$ is a free $R$-module of rank 3. 
Thus we have checked all hypotheses of \cite[Proposition 2.5]{CPWZ3}.
By \cite[Proposition 2.5(2)]{CPWZ3} and its proof, the subgroup generated by
$g_1$ and $g_3$ is free.

(3) Since $\partial_1$ has degree $\beta v$, the first assertion follows
because $g_1=\exp(\partial_1)$. In particular, $g_1$ maps a homogeneous 
element of degree $v$ to a linear combination of homogeneous
elements of degrees in $v+\beta v {\mathbb N}$. Thus $g_1$ restricts 
to an automorphism of $\Bbbk_{q}[\bfx]^{(v)}$. The same statement holds for
$g_3$.
\end{proof}

\begin{lemma}
\label{xxlem8.2}
Let $A$ be a connected graded domain generated 
in degree one. Let $g$ be an automorphism of $\Bbbk_{q}[\bfx]$ such that
\begin{enumerate}
\item[(a)]
$g(x)=x+{\sf{higher\;\; degree\;\; terms}}$ for all
$x$ of degree 1, and
\item[(b)]
$g$ and $g^{-1}$ send a homogeneous element of degree $v$ to a linear 
combination of homogeneous elements of degrees in $v {\mathbb N}$.
\end{enumerate}
Then $g$ restricts to an automorphism $g'$ of $A^{(v)}$. Further
$g$ is the identity if and only of $g'$ is.
\end{lemma}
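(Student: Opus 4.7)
The plan is to handle the two assertions in sequence, relying only on the assumptions that $A$ is a connected graded domain generated in degree one and that $A$ is a (graded) domain.

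First I would dispatch the restriction claim. Because $A$ is generated by $A_1$, one has $A_j = (A_1)^j$ as $\Bbbk$-subspaces, and in particular $A_{vi} = (A_v)^i$ for every $i \geq 1$. Hence the Veronese $A^{(v)} = \bigoplus_{i \geq 0} A_{vi}$ is generated as a subalgebra by $A_v$. Hypothesis (b), applied componentwise on $A_v$, gives $g(A_v) \subseteq A^{(v)}$ and $g^{-1}(A_v) \subseteq A^{(v)}$. Since $g$ and $g^{-1}$ are algebra homomorphisms and $A_v$ generates $A^{(v)}$, both $g$ and $g^{-1}$ preserve $A^{(v)}$, producing the restriction $g' := g|_{A^{(v)}} \in \Aut(A^{(v)})$.

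For the equivalence, one direction is trivial. For the converse, assume $g' = 1$. For any $x \in A_1$, $x^v$ lies in $A_v \subseteq A^{(v)}$, so
\[
g(x)^v \;=\; g(x^v) \;=\; x^v.
\]
By hypothesis (a), write $g(x) = x + y$ where $y$ is a (possibly zero) sum of homogeneous components in degrees $\geq 2$. I would then argue by contradiction: if $y \neq 0$, let $k \geq 2$ be maximal with the degree-$k$ component $y_k$ of $g(x)$ nonzero. Then the top-degree part of $g(x)$ is $y_k$, sitting in degree $k$. Because $A$ is a graded domain, the top-degree part of the product $g(x)^v$ is $y_k^v$, which is nonzero and has degree $vk$. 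But $g(x)^v = x^v$ has top degree exactly $v$, and $vk > v$ since $k \geq 2$ — contradiction. Hence $y = 0$, so $g(x) = x$ for every $x \in A_1$, and since $A_1$ generates $A$ as a $\Bbbk$-algebra, $g = 1$.

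There is no substantive obstacle in this plan; the single fact being leveraged is the standard observation that in a connected graded domain the leading (top-degree) component of a product is the product of the leading components. Everything else is routine bookkeeping with the $\mathbb{N}$-grading.
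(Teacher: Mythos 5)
Your proposal is correct and follows essentially the same route as the paper: the paper also reduces the converse to $g(x)^v = g'(x^v) = x^v$ and then uses multiplicativity of the top degree in a domain to force $\deg g(x)=1$, whence $g(x)=x$ by hypothesis (a). Your leading-term contradiction is just a spelled-out version of the paper's one-line degree computation, and your argument for the restriction (via $A_v$ generating $A^{(v)}$ and hypothesis (b) applied to both $g$ and $g^{-1}$) is the "easy" first assertion the paper omits.
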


\begin{proof} The first assertion is easy to show. Now we assume that
$g'$ is the identity. Then $g'(x^v)=x^v$ for all $x\in A$ of degree 1. 
This implies that
$$v\deg g(x)=\deg g(x)^v=\deg g(x^v)=\deg g'(x^v)=\deg x^v=v.$$
Hence $\deg g(x)=1$ and $g(x)=x$ by hypothesis (a).
\end{proof}

Now we are ready to prove the first Tits alternative theorem.

\begin{theorem}
\label{xxthm8.3} Suppose that $n$ is odd. 
\begin{enumerate}
\item[(1)]
If $\gcd(m,v)>1$, then $\Aut(\Bbbk_{q}[\bfx]^{(v)})$ is virtually abelian.
\item[(2)]
If $\gcd(m,v)=1$, then $\Aut(\Bbbk_{q}[\bfx]^{(v)})$ contains a free subgroup
of rank 2.
\end{enumerate}
\end{theorem}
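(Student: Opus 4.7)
The plan is to treat the two parts separately: part (1) is essentially a corollary of the structure theorems already proved in Section 6, while part (2) requires an explicit construction of a free subgroup using the exponentials of locally nilpotent derivations built in Lemma \ref{xxlem8.1}.

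For part (1), with $n$ odd and $\gcd(m,v)>1$, I would simply invoke the already-proved structure theorems. Either $m=2$, which forces $v$ to be even so that $n$ and $v$ have different parity and Theorem \ref{xxthm0.1} gives $\Aut(\Bbbk_{q}[\bfx]^{(v)})\cong S_n\ltimes(\Bbbk^{\times})^n$; or $m>2$, in which case hypothesis (b) of Theorem \ref{xxthm0.2} is satisfied and yields either $\Z/(n)\ltimes(\Bbbk^{\times})^n$ or $(\Bbbk^{\times})^n$. In all three possibilities the abelian normal subgroup $(\Bbbk^{\times})^n$ has finite index, so the group is virtually abelian in the sense of Definition \ref{xxdef0.6}(1).

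For part (2), with $\gcd(m,v)=1$, I would combine Lemmas \ref{xxlem8.1} and \ref{xxlem8.2}. Choose positive integers $\alpha,\beta$ satisfying \eqref{E8.0.2}, form the locally nilpotent derivations $\partial_1,\partial_3$ of degree $\beta v$ from Lemma \ref{xxlem8.1}(1), and set $g_1:=\exp(\partial_1)$ and $g_3:=\exp(\partial_3)$. Lemma \ref{xxlem8.1}(2) says that $G:=\langle g_1,g_3\rangle$ is a free subgroup of rank $2$ in $\Aut(\Bbbk_{q}[\bfx])$, and Lemma \ref{xxlem8.1}(3) ensures that each $g_i$, and hence every element of $G$, restricts to an algebra automorphism of $\Bbbk_{q}[\bfx]^{(v)}$. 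This yields a group homomorphism $\rho\colon G\to\Aut(\Bbbk_{q}[\bfx]^{(v)})$, and it suffices to show that $\rho$ is injective.

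Injectivity I would deduce from Lemma \ref{xxlem8.2}. Each generator $g_i$ satisfies condition (a) of that lemma because $\partial_i^2(x_j)=0$ gives $g_i(x_j)=x_j+\partial_i(x_j)$, with $\partial_i(x_j)$ either zero or a monomial of degree $\beta v+1>1$; condition (b) is exactly Lemma \ref{xxlem8.1}(3) applied to both $g_i$ and $g_i^{-1}=\exp(-\partial_i)$. Both conditions are stable under composition and inversion (condition (a) because each factor strictly raises degree on every correction term, condition (b) because it is the preservation of a fixed graded subspace), so every $g\in G$ satisfies the hypotheses of Lemma \ref{xxlem8.2}. That lemma then forces any $g\in\ker\rho$ to be the identity of $\Bbbk_{q}[\bfx]$, completing the proof. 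The substantive content of the argument is already encapsulated in Lemma \ref{xxlem8.1}: the Bezout relation \eqref{E8.0.2} is what allows one to place the LNDs in a degree divisible by $v$, and freeness of $\langle g_1,g_3\rangle$ is obtained by matching the configuration of \cite[Proposition 2.5]{CPWZ3}. Once those are in hand, the passage to the Veronese subring via Lemma \ref{xxlem8.2} is essentially formal.
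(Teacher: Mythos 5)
Your proposal is correct and follows essentially the same route as the paper: part (1) by reduction to Theorems \ref{xxthm0.1} and \ref{xxthm0.2}, and part (2) by restricting the free subgroup $\langle g_1,g_3\rangle$ from Lemma \ref{xxlem8.1} to the Veronese subring and using Lemma \ref{xxlem8.2} to see that no nontrivial element restricts to the identity. Your explicit check that the hypotheses of Lemma \ref{xxlem8.2} persist under composition and inversion is a detail the paper leaves implicit, but it is the same argument.
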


\begin{proof} (1) This follows from Theorems \ref{xxthm0.1} and 
\ref{xxthm0.2}.

(2) Let $g_1$ and $g_3$ be the automorphisms of $\Bbbk_{q}[\bfx]$ given in 
Lemma \ref{xxlem8.1}. By Lemma \ref{xxlem8.1}(2),
the elements $g_1$ and $g_3$ generate a free subgroup of rank $2$,
by Lemma \ref{xxlem8.1}(3), they restrict to automorphisms $g'_1$ and $g'_3$ 
of $\Bbbk_{q}[\bfx]^{(v)}$.
We claim that $g'_1$ and $g'_3$ generates a free subgroup of
$\Aut(\Bbbk_{q}[\bfx]^{(v)})$. Let $\mathrm{Id}\neq g\in \langle g_1,g_3\rangle
\subseteq \Aut(\Bbbk_{q}[\bfx])$ and let $g'$ be the 
corresponding element in $\langle g'_1,g'_3\rangle 
\subseteq \Aut(\Bbbk_{q}[\bfx]^{(v)})$. By Lemma \ref{xxlem8.2}, the element
$g'$ is not the identity. Therefore $\langle g'_1,g'_3\rangle$ 
is a free group of rank 2.
\end{proof}

Secondly we consider the case when $n$ is even and write $n=2s$.
As before let $m$ be the order of $q$. We consider the case 
where $m$ divides $v$ and write $v=m\gamma$. 

\begin{lemma}
\label{xxlem8.4} 
Let $n=2$ and $v=m\gamma$ and $A=\Bbbk_{q}[\bfx]^{(v)}$.
Then $\Aut(A)$ contains a free subgroup of rank 2.
\end{lemma}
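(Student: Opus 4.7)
The plan is to exploit the fact that, since $v = m\gamma$ gives $q^v = 1$, the algebra $A = \Bbbk_q[x_1, x_2]^{(v)}$ is actually commutative and isomorphic to the usual commutative Veronese subring $\Bbbk[s,t]^{(v)}$; its automorphism group then contains the image of the natural $GL_2(\Bbbk)$-action, which supplies a free subgroup of rank $2$. Note that Theorem \ref{xxthm5.3} yields only the trivial discriminant in this regime (since $m/g - 1 = 0$ when $m \mid v$), so the discriminant method used in earlier sections is silent here.

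First I would verify commutativity. For two monomials $x_1^a x_2^b, x_1^c x_2^d \in A$, the conditions $a+b, c+d \in v\N$ combined with $m \mid v$ give $b \equiv -a$ and $d \equiv -c \pmod{m}$, hence $bc \equiv ad \pmod{m}$. A direct calculation then yields $(x_1^a x_2^b)(x_1^c x_2^d) = q^{bc} x_1^{a+c} x_2^{b+d} = q^{ad} x_1^{a+c} x_2^{b+d} = (x_1^c x_2^d)(x_1^a x_2^b)$. Next I would construct an explicit isomorphism $\phi : \Bbbk[s, t]^{(v)} \to A$ of $\Bbbk$-algebras by $\phi(s^{v-i} t^i) = q^{-\binom{i}{2}} x_1^{v-i} x_2^i$ for $0 \le i \le v$. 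Using $q^v = 1$ and $(x_1^{v-i} x_2^i)(x_1^{v-j} x_2^j) = q^{-ij} x_1^{2v-i-j} x_2^{i+j}$ together with the identity $\binom{i+j}{2} = \binom{i}{2} + \binom{j}{2} + ij$, one checks that $\phi$ respects the defining Veronese relations $\eta_i \eta_j = \eta_k \eta_\ell$ for $i+j = k+\ell$; bijectivity on a monomial basis (up to nonzero scalars) then upgrades $\phi$ to an isomorphism.

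Having identified $A$ with a commutative Veronese, I would use the natural $GL_2(\Bbbk)$-action on $\Bbbk[s, t]$ by linear substitutions of $s$ and $t$, which preserves the total degree and hence restricts to an action on $\Bbbk[s, t]^{(v)} \cong A$. The kernel of the induced map $GL_2(\Bbbk) \to \Aut(A)$ consists of the scalar matrices $\lambda I$ with $\lambda^v = 1$, so the quotient $GL_2(\Bbbk)/\mu_v$ embeds into $\Aut(A)$. A free subgroup of rank $2$ is then produced by taking the images of Sanov's generators $\begin{pmatrix} 1 & 2 \\ 0 & 1 \end{pmatrix}$ and $\begin{pmatrix} 1 & 0 \\ 2 & 1 \end{pmatrix}$, which generate a free subgroup of $SL_2(\Z)$ and therefore of $SL_2(\Bbbk)$ under the natural embedding $\Z \hookrightarrow \Bbbk$; since the central kernel $\{\pm I\} \cap \mu_v$ contains neither generator, freeness is preserved under the projection into $\Aut(A)$.

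The main obstacle is the field hypothesis: over a field of small positive characteristic the Sanov matrices have finite order, so one must instead build two generators from an element of $\Bbbk^\times$ (or $\Bbbk^+$) of infinite order and run a ping-pong argument, or invoke Tits' theorem for the linear group $GL_2(\Bbbk)$ directly. This implicitly requires $\Bbbk$ to be infinite, which is natural in the Tits alternative context of this section.
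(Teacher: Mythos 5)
Your first step---checking that $A$ is commutative and identifying it with the commutative Veronese $\Bbbk[s,t]^{(v)}$---is exactly the paper's first step (the paper asserts this with less detail, citing \eqref{E3.0.4}--\eqref{E3.0.5}), but from there you genuinely diverge. The paper never touches linear substitutions: it takes the locally nilpotent derivations $\partial_1\colon x_1\mapsto x_2^{v+1},\ x_2\mapsto 0$ and $\partial_2\colon x_1\mapsto 0,\ x_2\mapsto x_1^{v+1}$, sets $g_i=\exp(\partial_i)$, invokes \cite[Proposition 2.5]{CPWZ3} to get a free subgroup inside $\Aut(\Bbbk[x_1,x_2])$, and then uses Lemma \ref{xxlem8.2} to descend to the Veronese. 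Where your route works it is more elementary and cleaner: the $\mathrm{GL}_2$-action preserves total degree, so it restricts to $\Bbbk[s,t]^{(v)}$ with no need for Lemma \ref{xxlem8.2}, and the kernel is just the central subgroup $\{\lambda I:\lambda^v=1\}$. (One small repair: to keep freeness after projecting you need the Sanov subgroup to meet that kernel trivially, not merely that the two generators avoid it; this does hold, since a free group is torsion-free while every scalar matrix in $\mathrm{SL}_2$ has order at most $2$.)

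The genuine gap is positive characteristic, and your own caveat does not close it. The base fields relevant here include $\overline{\mathbb{F}}_p$, which contains a primitive $m$th root of unity for every $m$ prime to $p$ and so is squarely within the scope of the lemma. But $\mathrm{GL}_2(\overline{\mathbb{F}}_p)$ is locally finite---every finitely generated subgroup lies in some $\mathrm{GL}_2(\mathbb{F}_{p^k})$---so it contains no free subgroup at all; no choice of two matrices, Sanov or otherwise, can succeed, and the failure is not about $\Bbbk$ being finite. Your proposed repairs fail for the same reason: $\overline{\mathbb{F}}_p$ is infinite yet has no element of infinite order in either $\Bbbk^{\times}$ or $\Bbbk^{+}$, and Tits' theorem in positive characteristic applies only to \emph{finitely generated} linear groups (the locally finite, non--virtually-solvable group $\mathrm{GL}_2(\overline{\mathbb{F}}_p)$ is the standard example showing that hypothesis cannot be dropped). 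This is precisely why the paper abandons linear automorphisms in favor of the degree-raising unipotent ones $x_1\mapsto x_1+x_2^{v+1}$, whose freeness is established by the ping-pong-type result \cite[Proposition 2.5]{CPWZ3} under the hypothesis $\Bbbk\neq\Z/(2)$ rather than a characteristic-zero hypothesis. In characteristic zero your argument is complete and is a legitimate alternative; to cover the general case you would need to replace the linear generators by something along the paper's lines.
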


\begin{proof} By direct computation or
equations similar to \eqref{E3.0.4}-\eqref{E3.0.5}, 
if $n=2$, $A$ is isomorphic to the commutative ring 
$\Bbbk[x_1,x_2]^{(v)}$. So we identify
these two algebras. 

Consider two derivations
$$\partial_1: x_1\to x_2^{v+1}, \quad x_2\to 0$$
and
$$\partial_2: x_1\to 0, \quad x_2\to x_1^{v+1}.$$
Let $g_1=\exp(\partial_1)$ and $g_2=\exp(\partial_2)$.
Then, by \cite[Proposition 2.5]{CPWZ3},
$g_1$ and $g_2$ generate a free subgroup of rank $2$.
Since the degree of $\partial_i$ is $v$, we see that $g_1$ and
$g_2$ restrict to automorphisms $g'_1$ and $g'_2$ 
of $\Bbbk[x_1,x_2]^{(v)}$. By Lemma \ref{xxlem8.2},
the subgroup of $\Aut(A)$ generated $g'_1$ and $g'_2$
is free of rank $2$.
\end{proof}

\begin{proof}[Proof of Theorem \ref{xxthm0.7}]
When $n$ is odd, this follows from Theorem \ref{xxthm8.3}.
When $n=2$, this follows from Theorem \ref{xxthm0.2} and 
Lemma \ref{xxlem8.4}.
\end{proof}

\providecommand{\bysame}{\leavevmode\hbox to3em{\hrulefill}\thinspace}
\providecommand{\MR}{\relax\ifhmode\unskip\space\fi MR }
\providecommand{\MRhref}[2]{%

\href{http://www.ams.org/mathscinet-getitem?mr=#1}{#2} }
\providecommand{\href}[2]{#2}


\begin{thebibliography}{10}

\bibitem[AlC]{AlC}
J. Alev and M. Chamarie,
D{\'e}rivations et Automorphismes de Quelques Alg{\'e}bres Quantiques,
Comm. Algebra {\bf 20}(6) (1992), 1787--1802.

\bibitem[AlD]{AlD}
J. Alev and F. Dumas,
Rigidit{\'e} des plongements des quotients primitifs minimaux de
$U_q(sl(2))$ dans l'alg{\'e}bre quantique de Weyl-Hayashi,
Nagoya Math. J. {\bf 143} (1996), 119--146.

\bibitem[AnD]{AnD}
N. Andruskiewitsch and F. Dumas,
On the automorphisms of $U^+_q (g)$, Quantum groups, 107--133,
IRMA Lect. Math. Theor. Phys., 12, Eur. Math. Soc., Z{\"u}rich, 2008.












\bibitem[BJ]{BJ}
V.V. Bavula and D.A. Jordan,
Isomorphism problems and groups of automorphisms for generalized Weyl
algebras. Trans. Amer. Math. Soc. {\bf 353} (2001), no. 2, 769--794.

\bibitem[BZ]{BZ} 
J. Bell and J.J. Zhang, 
Zariski Cancellation problem for noncommutative algebras,
preprint (2016), arXiv:1601.04625.


\bibitem[CPWZ1]{CPWZ1}
S. Ceken, J. Palmieri, Y.-H. Wang and J.J. Zhang,
The discriminant controls automorphism groups of noncommutative 
algebras, Adv. Math., {\bf 269} (2015), 551-584.


\bibitem[CPWZ2]{CPWZ2}
S. Ceken, J. Palmieri, Y.-H. Wang and J.J. Zhang,
The discriminant criterion and automorphism groups of quantized algebras,
Adv. Math., {\bf 285} (2016), 754--801.

\bibitem[CPWZ3]{CPWZ3}
S. Ceken, J. Palmieri, Y.-H. Wang and J.J. Zhang,
Invariant theory for quantum Weyl
algebras under finite group action, 
Proceedings of Symposia in Pure Mathematics (to appear),
preprint (2015), arXiv:1501.07881.

\bibitem[CYZ]{CYZ}
K. Chan, A.A. Young and J.J. Zhang,
Discriminant formulas and applications, Algebra \& Number Theory,
(in press, 2016), arXiv:1503.06327.








\bibitem[GTK]{GTK}
J. G{\'o}mez-Torrecillas and L. El Kaoutit,
The group of automorphisms of the coordinate ring of quantum
symplectic space, Beitr{\"a}ge Algebra Geom. {\bf 43} (2002),
no. 2, 597--601.


\bibitem[GY]{GY}
K.R. Goodearl and M.T. Yakimov, 
Unipotent and Nakayama automorphisms of quantum nilpotent algebras, 
preprint, arXiv:1311.0278, 2013.



\bibitem[Gu1]{Gu1}
N. Gupta,
On the Cancellation Problem for the Affine Space ${\mathbb A}^3$ 
in characteristic $p$, {\it Inventiones Math.}, 
{\bf 195}  (2014),  no. 1, 279--288.


\bibitem[Gu2]{Gu2}
N. Gupta,
On Zariski's Cancellation Problem in positive characteristic,
Adv. Math. {\bf 264}  (2014), 296--307.

\bibitem[Gu3]{Gu3}
N. Gupta,
A survey on Zariski Cancellation Problem,
Indian J. Pure Appl. Math., {\bf 46}(6) (2015), 865--877.










\bibitem[LL]{LL}
S. Launois and T.H. Lenagan, 
Automorphisms of quantum matrices, 
Glasg. Math. J. {\bf 55},  (2013),  no. A, 89--100. 


\bibitem[LMZ]{LWZ}
J.-F. L{\"u}, X.-F. Mao and J.J. Zhang, 
Nakayama automorphism and applications, 
Trans. Amer. Math. Soc. (2016) (to appear). 












\bibitem[SU]{SU}
I. Shestakov and U. Umirbaev,
The tame and the wild automorphisms of polynomial rings in
three variables, J. Amer. Math. Soc. {\bf 17} (1) (2004)
197--227.

\bibitem[SAV]{SAV}
M. Su{\'a}rez-Alvarez and Q. Vivas,
Automorphisms and isomorphism of quantum generalized Weyl algebras,
J. Algebra  424  (2015), 540--552.




\bibitem[Ti]{Ti}
J. Tits, 
Free subgroups in linear groups, 
J. Algebra {\bf 20} (1972), 250--270.







\bibitem[Y1]{Y1}
M. Yakimov, The Andruskiewitsch-Dumas conjecture, 
Selecta Math. (N.S.) {\bf 20} (2014),  no. 2, 421--464. 


\bibitem[Y2]{Y2}
M. Yakimov, The Launois-Lenagan conjecture, 
J. Algebra {\bf 392}  (2013), 1--9. 


\end{thebibliography}
\end{document}